\documentclass{amsart}[12pt]

\usepackage{amsmath,amssymb,enumerate}

\usepackage{color}
\usepackage{mathrsfs}

\theoremstyle{plain}
\newtheorem{thm}{Theorem}[section]
\newtheorem{theorem}[thm]{Theorem}
\newtheorem{lemma}[thm]{Lemma}
\newtheorem{lem}[thm]{Lemma}
\theoremstyle{definition}
\newtheorem{defi}[thm]{Definition}

\newtheorem{prop}[thm]{Proposition}
\newtheorem{rem}[thm]{Remark}
\newtoks\by
\newtoks\paper
\newtoks\book
\newtoks\jour
\newtoks\yr
\newtoks\pages
\newtoks\vol
\newtoks\publ
\def\ota{{\hbox\vol{???}}}
\def\cLear{\by=\ota\paper=\ota\book=\ota\jour=\ota\yr=\ota
	\pages=\ota\vol=\ota\publ=\ota}
\def\endpaper{\the\by, \the\paper.
	{\it\the\jour\/} {\bf \the\vol} (\the\yr), \the\pages.\cLear}
\def\endbook{\the\by, {\it\the\book}. \the\publ.\cLear}
\def\endprep{\the\by, \the\paper. \the\jour.\cLear}


\def\spa{\textup{span}}

\def\FT{\mathscr F}

\def\Ne{{\mathbb{N}}}

\newcommand{\eps}{\varepsilon}
\def\N{\mathbb{N}} 
\def\Z{\mathbb{Z}}
\def\R{\mathbb{R}}
\def\be{\begin{equation}}
	\def\ee{\end{equation}}

\def\sT{\,;\;}
\def\dd{\text{\rm\,d}} 

\newcommand{\Bs}{\mathscr{B}}

\newcommand{\Fs}{\mathscr{F}}

\newcommand{\Ss}{\mathscr{S}}

\newcommand{\T}{\mathbb{T}}


\numberwithin{equation}{section}

\newtheorem{cor}[thm]{Corollary}

\theoremstyle{definition}

\newtheorem{defn}[thm]{Definition}

\newtheorem*{defn*}{Definition}

\newtheorem{rmk}[thm]{Remark}

\newtheorem*{rmk*}{Remark}

\numberwithin{equation}{section} \headheight=12pt
\begin{document}
	\title[Notes on Non-Compact Maps and Bernstein Numbers]{Notes on Non-Compact Maps and \\  the Importance of Bernstein Numbers}

	\author[D. E. Edmunds]{David E. Edmunds}
	\address{Department of Mathematics\\ University of Sussex\\ Pevensey 2 Building\\ Brighton\\
		BN1 9QH, Sussex\\ United Kingdom \\ {https://orcid.org/0000-0002-8528-3067}}
	\email{davideedmunds@aol.com}

		\author[J. Lang]{Jan Lang}
	\address{Department of Mathematics, The Ohio State University\\ Columbus, OH \\ United States \  \\ {\it and }\\	
		Department of Mathematics, Faculty of Electrical Engineering\\ Czech Technical University in Prague \\ Czech Republic \\
		{https://orcid.org/0000-0003-1582-7273}}
	\email{lang@math.osu.edu}
	
		\date{}
	\thanks{This paper is dedicated to the memory of Prof. A. Pietsch and his contributions to mathematics.}
	\subjclass{Primary 47B06, 47B10; Secondary 46B45, 47B37, 47L20}
	\keywords{Quality of non-compactness, strictly singular, Bernstein numbers, Sobolev and Besov spaces}

	\begin{abstract}
		In this review paper we study non-compact operators and embeddings between function spaces, highlighting interesting phenomena and the significance of Bernstein numbers. In particular, we demonstrate that for non-compact maps the usual $s$-numbers (e.g., approximation, Kolmogorov, and entropy numbers) fail to reveal finer structural properties, and one must instead consider concepts such as strict singularity and Bernstein numbers.
	\end{abstract}
	\maketitle
	
	\section{Introduction}
	
	Let $T: X \to Y$ be a linear map between Banach spaces $X$ and $Y$. When $T$ is compact, one may use entropy numbers or $s$-numbers (such as approximation and Kolmogorov numbers) to describe the quality of compactness and the speed of approximation via finite-dimensional subspaces. An extensive literature exists on compact operators and their approximative characteristics. 
	
	However, in this paper we focus on non-compact maps. We show that when the embeddings or maps are non-compact, the aforementioned numbers do not provide useful information about the inner structure of the map. Instead, one needs to employ alternative quantities such as strict singularity and Bernstein numbers. 
	
	We recall that, except for the entropy numbers, the numbers mentioned above belong to the class of $s$-numbers, which were extensively studied by Pietsch throughout his career. It is worth noting that among the $s$-numbers, Bernstein numbers—introduced in Pietsch's seminal paper from 1970—were later largely neglected. In fact, in his later survey ``History of Banach Spaces'' he did not mention them, and in a subsequent paper titled ``Bad Properties of the Bernstein Numbers,'' he described certain drawbacks of the operator ideals based on these numbers.

Our aim is to restore Bernstein numbers to their former prominence and to demonstrate that they offer additional insights into the structure of non‐compact maps.  In particular, we focus on the following non‐compact maps and embeddings:
\begin{enumerate}[(a)]
	\item Embeddings between Besov spaces,
	\item Embeddings between Sobolev spaces,
	\item The Fourier transformation,
	\item The Laplace transformation.
\end{enumerate}

Our analysis employs the concepts of strictly singular and finitely strictly singular operators, together with the behavior of Bernstein numbers, to describe the \emph{quality} of non‐compactness.

In the next section, we provide further motivation for studying Bernstein numbers and introduce the essential definitions.  We show that Bernstein numbers yield additional information about non‐compact embeddings, thereby justifying their study.  The subsequent sections present detailed analyses of the maps listed above; each was published separately in recent years (see \cite{ChianJanLidingI} for (a), \cite{ChianJanLidingII} for (b), \cite{EdmundsGurkaLang} for (c), and \cite{EdmundsLangLaplace} for (d)).

	\section{Motivation and Main Results} \label{section2}
	
	In this section we demonstrate—via concrete examples—why the standard tools used for describing (non-)compactness are not sufficiently refined and why strict singularity and finite strict singularity can provide additional insight.
	
	Let $T: X \to Y$ be a linear map between Banach spaces. We denote such maps by $T \in B(X,Y)$, and for $\varepsilon > 0$ we define
	\[
	B_\varepsilon X := \{ x \in X : \|x\|_X \le \varepsilon \},
	\]
	with the notation $BX := B_1X$.
	
	One fundamental question in functional analysis is to identify a quantity that measures the degree of compactness and, more generally, the size of the non-compact part of $T$. For compact operators, the most natural quantities are the $s$-numbers (such as approximation numbers and Kolmogorov numbers) as well as entropy numbers. We recall their definitions.
	
	\begin{defi}
		Let $T\in B(X,Y)$. Then:
		\begin{itemize}
			\item[(i)] The $n$th \emph{approximation number} of $T$ is defined by
			\[
			a_n(T) := \inf \{ \|T-F\| : F\in B(X,Y),\, \operatorname{rank}(F) < n \}.
			\]
			\item[(ii)] The $n$th \emph{Kolmogorov number} of $T$ is defined by
			\[
			d_n(T) := \inf_{Y_n} \sup_{z\in T(BX)} \inf_{y \in Y_n} \|y-z\|_Y,
			\]
			where the infimum is taken over all $n$-dimensional subspaces $Y_n \subset Y$.
			\item[(iii)] The $n$th \emph{entropy number} of $T$ is defined by
			\[
			e_n(T) := \inf \left\{\varepsilon > 0 : T(BX) \subset \bigcup_{j=1}^{2^{n-1}} \left(b_j + \varepsilon BY\right) \text{ for some } b_1, \dots, b_{2^{n-1}} \in Y \right\}.
			\]
		\end{itemize}
	\end{defi}
	
	It is well known that
	\[
	\|T\| = a_1(T) \ge a_2(T) \ge \cdots,
	\]
	and similarly for the Kolmogorov and entropy numbers. Moreover, if we restrict our attention to spaces with the approximation property, then $T\in B(X,Y)$ is compact if and only if
	\[
	\lim_{n\to\infty} a_n(T) = \lim_{n\to\infty} d_n(T) = \lim_{n\to\infty} e_n(T) = 0.
	\]
	
	We recall that the \emph{ball measure of non-compactness} $\alpha(T)$ of a bounded linear map $T: X \to Y$ is defined as the infimum of all radii $\rho > 0$ for which there exists a finite collection of balls in $Y$, each of radius $\rho$, whose union covers $T(B_X)$ (here $B_X$ denotes the unit ball of $X$). Clearly,
	\[
	0 \le \alpha(T) \le \|T\|.
	\]
	In our context we have
	\[
	\alpha(T)= \lim_{n\to \infty} a_n(T)= \lim_{n\to \infty} d_n(T)=\lim_{n\to \infty} e_n(T).
	\]
	
	We say that the map $T$ is \emph{maximally non-compact} if $\alpha(T) = \|T\|$.
	
	One might expect that the ball measure of non-compactness should fully describe the quality of non-compactness. However, as the following examples show, it is not sufficiently sensitive.
	
	Consider the following standard non-compact embeddings:
	\begin{itemize}
		\item[I.] The embedding $I_1:\ell^{p_0}(\N) \hookrightarrow\ell^{p_1}(\N)$, for $p_0 \le p_1$.
		\item[II.] The embedding $I_2:\ell^{p,q_0}(\N) \hookrightarrow\ell^{p,q_1}(\N)$, for $q_0 \le q_1$.
	\end{itemize}
	
	It is well known that these maps are non-compact, and in fact one easily verifies that
	\begin{align*}
		\|I_1\|_{\ell^{p_0}\to\ell^{p_1}} &= 1 = \alpha(I_1), \quad \text{for } p_0 \le p_1, \\
		\|I_2\|_{\ell^{p,q_0}\to\ell^{p,q_1}} &= 1 = \alpha(I_2), \quad \text{for } q_0 \le q_1.
	\end{align*}
	Thus, both $I_1$ and $I_2$ are maximally non-compact. Moreover, the embeddings \( \ell^{p_0}\hookrightarrow \ell^{p_1} \) and \( \ell^{p,q_0}\hookrightarrow \ell^{p,q_1} \) are strict when $p_0 < p_1$ and $q_0 < q_1$, respectively. Since the ball measure of non-compactness remains constant even as the target space enlarges, it fails to detect qualitative changes in the non-compactness of $I_1$ and $I_2$.
	
	Next, consider the Sobolev embedding:
	\begin{itemize}
		\item[III.] The embedding 
		\[
		E: W^{1,p}(\Omega) \to L^{p^*,q}(\Omega),
		\]
		where $\Omega\subseteq\R^d$, $1<p<d$, $1<p\le q\le\infty$, and $p^*=\frac{dp}{d-p}$.
	\end{itemize}
	
	Note that $L^{p^*,p}$ is the smallest possible target space for the Sobolev space $W^{1,p}$, not only among Lorentz spaces but also among all rearrangement-invariant spaces. In this context, one finds (see \cite{LMOP} and the references therein) that
	\[
	\|E\| = \alpha(E),
	\]
	so the embeddings into the Lorentz spaces $L^{p^*,q}$ (with $p\le q\le\infty$) are all maximally non-compact. As in the previous examples, since $L^{p^*,q} \hookrightarrow L^{p^*,r}$ for $q < r$, the ball measure of non-compactness does not capture the qualitative change in non-compactness as the target space is enlarged.
	
	This motivates the search for alternative quantities to measure the \emph{degree} of non-compactness. A more refined notion arises from the concepts of \emph{strict singularity}, \emph{finite strict singularity}, and \emph{Bernstein numbers}.
	
	Given a bounded linear map $T: X \to Y$ between two (quasi-)Banach spaces, the $n$th \emph{Bernstein number} $b_n(T)$ is defined as
	\[
	b_n(T) := \sup \left\{ \inf_{\substack{x \in X_n \\ \|x\|_X=1}} \|Tx\|_{Y} : X_n \subseteq X \text{ is an } n\text{-dimensional subspace} \right\}.
	\]
	One can show (see \cite[Theorem~4.6]{P:74}) that the Bernstein numbers are the smallest injective strict $s$-numbers. We have
	\[
	\|T\| = b_1(T) \ge b_2(T) \ge \cdots,
	\]
	and in general, 
	\[
	a_n(T) \ge d_n(T) \ge b_n(T).
	\]
	
	We say that $T$ is \emph{finitely strictly singular} if
	\[
	\lim_{n\to\infty} b_n(T)=0.
	\]
	The operator $T$ is \emph{strictly singular} if there is no infinite-dimensional subspace $Z\subseteq X$ such that the restriction $T|_Z$ is an isomorphism from $Z$ onto $T(Z)$. Equivalently,
	\[
	\inf_{\substack{x \in Z \\ \|x\|_X=1}} \|Tx\|_{Y} = 0 \quad \text{for every infinite-dimensional subspace } Z \subseteq X.
	\]
	
	These notions are related as follows:
	\[
	\text{$T$ is compact} \quad \Longrightarrow \quad \text{$T$ is finitely strictly singular} \quad \Longrightarrow \quad \text{$T$ is strictly singular},
	\]
	and in general the reverse implications fail. It is also known that if $T$ is an operator between Hilbert spaces then $T$ is strictly singular if and only if it is compact; in this case, all $s$-numbers coincide.
	
	The importance of strict singularity for non-compact maps can be seen in the following observations:
	\begin{itemize}
		\item For the embedding $I_1:\ell^{p_0} \hookrightarrow \ell^{p_1}$ with $p_0 < p_1$, one obtains
		\[
		b_n(I_1) = n^{\frac{1}{p_1} - \frac{1}{p_0}},
		\]
		which implies that $I_1$ is finitely strictly singular; when $p_0=p_1$, it is not strictly singular.
		\item For the embedding $I_2:\ell^{p,q_0} \hookrightarrow \ell^{p,q_1}$ with $q_0 < q_1$, one finds that $I_2$ is strictly singular but not finitely strictly singular, since $b_n(I_2)=1$.
		\item For the non-compact Sobolev embedding, Bourgain and Gromov \cite{BouGro} showed that
		\[
		W^{1,1} ((0,1)^d) \to L^{\frac{d}{d-1}}((0,1)^d)
		\]
		has Bernstein numbers that decay to zero, so it is finitely strictly singular. Later, Lang and Mihula \cite{LangMihula} improved and extended these results, showing that the optimal Sobolev embedding
		\[
		W^{k,p} (\Omega) \to L^{p^*,p}(\Omega)
		\]
		is not strictly singular, whereas the embedding
		\[
		W^{k,p} (\Omega) \to L^{p^*}(\Omega)
		\]
		is finitely strictly singular, with sharp estimates for the decay of Bernstein numbers.
	\end{itemize}
	
	Thus, strict singularity and finite strict singularity provide additional information on the structure of non-compact embeddings between function spaces.
	
	A natural question is whether strict singularity can yield new insights beyond the framework of embeddings between function spaces, particularly for general linear operators. For example, consider the discrete Fourier transform
	\[
	\mathcal{F}: L^p({\mathbb{S}}^1) \to \ell^{p'}(\Z), \quad 1 \leq p \leq 2.
	\]
	It is clear that $\mathcal{F}$ is non-compact. Results of Weis (1987) and Lefèvre \& Rodríguez-Piazza (2014) (see \cite{LefevrePiazzaFSSApplication}) imply that:
	\begin{itemize}
		\item If $1 < p < 2$, then $\mathcal{F}$ is finitely strictly singular.
		\item If $p = 1$, then $\mathcal{F}$ is strictly singular but not finitely strictly singular.
		\item If $p = 2$, then $\mathcal{F}: L^2 \to \ell^2$ is not strictly singular (in fact, it is invertible).
	\end{itemize}
	This demonstrates that the notion of strict singularity is valuable not only for non-compact embeddings between function spaces but also for more general non-compact maps.
	The results mentioned in the next sections are concerned with:
	\begin{itemize}
		\item Embeddings between Besov sequence spaces (see Corollary \ref{Cor::SeqEmbed2}),
		\item Embeddings between Besov spaces (see Theorems \ref{Thm::ClassifyDom} and \ref{Thm::ClassifyRn}),
		\item Embeddings between Sobolev spaces (see Theorem \ref{Thm::MainThm}),
		\item The Fourier transformation (see Theorem \ref{MainThhm}),
		\item The Laplace transformation (see Theorems \ref{Thereom 1} and \ref{Thereom 2}).
	\end{itemize}
	All these results have 
	 appeared in earlier publications; here we assemble them to provide an overview of recent results in non‐compact maps and embeddings, and their connection with Bernstein numbers. For further details (including notation), see \cite{ChianJanLidingI}, \cite{ChianJanLidingII}, \cite{EdmundsGurkaLang} and \cite{EdmundsLangLaplace}.

	\section{Definitions of Function Spaces and Technical Results} 
	
	Natural questions following from the work of Lang and Mihula \cite{LangMihula} involve describing the Sobolev embedding when the target space is $L^{p^*,q}$ with $p < q \neq p^*$. This case was omitted in their work since their techniques did not extend to this range. We will obtain information about the Sobolev embedding by studying embeddings between Besov spaces.
	
	We start with some definitions and notations. First, we recall the definitions of Lorentz, Sobolev, and Besov spaces.
	
	Given a measurable function $f:\R^n\to\R$, its \emph{distribution function} $f_*$ is defined by
	\begin{equation*} 
		f_{*}(\tau)=\mu\{x\in\R^n:|f(x)|>\tau\},\quad \tau>0,
	\end{equation*}
	and its \emph{nonincreasing rearrangement} is given by
	\begin{equation} \label{NonRear}
		f^*(t)=\inf\{\tau>0: f_{*}(\tau)\le t\},\quad t>0.
	\end{equation}
	
	\begin{defn}\label{Defn::Lorentz}
		Let $(\Omega,\mu)$ be a measure space. For $0<p<\infty$ and $0<r\le\infty$, the \emph{Lorentz space} $L^{p,r}(\mu)$ consists of all measurable functions $f:\Omega\to\R$ such that
		\[
		\|f\|_{L^{p,r}(\mu)}=p^{\frac1r}\left(\int_0^\infty t^{r-1}\mu\{x:|f(x)|>t\}^{r/p}\,dt\right)^{1/r}<\infty.
		\]
		When $r=\infty$, the integral is replaced by the appropriate supremum. In particular, $L^{p,p}(\mu)=L^p(\mu)$, and if $0<r_0<r_1\le\infty$, then $L^{p,r_0}(\mu)\subset L^{p,r_1}(\mu)$.
	\end{defn}
	
	For convenience, we set $L^{\infty,\infty}(\mu)=L^\infty(\mu)$ and note that $L^{\infty,r}(\mu)=\{0\}$ for $r<\infty$.
	
	We also require some basic properties of the distribution function and the nonincreasing rearrangement (cf. \cite[Proposition~3.1]{LangMihula}):
	
	\begin{lem}\label{SumDistrF}
		Let $\{M_j\}_{j\in\N}$ be a sequence of pairwise disjoint measurable subsets of $\R^n$. If $\{\psi_j\}_{j=1}^\infty$ is a sequence of measurable real-valued  functions with $\operatorname{supp}\psi_j\subset M_j$, then for each $\tau>0$,
		\[
		\Big(\sum_{j=1}^{\infty}\psi_j\Big)_{*}(\tau)
		=
		\sum_{j=1}^{\infty} (\psi_j)_{*}(\tau).
		\]
	\end{lem}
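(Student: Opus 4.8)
The plan is to reduce the identity to countable additivity of $\mu$, the only real content being a bookkeeping argument that uses the disjointness of the supports. First I would check that the series $\Psi:=\sum_{j=1}^\infty\psi_j$ makes sense pointwise and defines a measurable function: since the $M_j$ are pairwise disjoint and $\operatorname{supp}\psi_j\subset M_j$, for every $x\in\R^n$ at most one of the numbers $\psi_j(x)$ is nonzero, so the series reduces pointwise either to a single term or to $0$; in particular $\Psi$ is the pointwise limit of the measurable partial sums and hence measurable.

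Next I would establish, for each fixed $\tau>0$, the set identity
\[
\{x\in\R^n:|\Psi(x)|>\tau\}=\bigcup_{j=1}^{\infty}\{x\in\R^n:|\psi_j(x)|>\tau\},
\]
together with the fact that the sets on the right-hand side are pairwise disjoint, since $\{|\psi_j|>\tau\}\subset\operatorname{supp}\psi_j\subset M_j$ and the $M_j$ are disjoint. For the inclusion ``$\subseteq$'': if $|\Psi(x)|>\tau>0$ then $\Psi(x)\neq0$, so $x$ must lie in $M_{j_0}$ for a (necessarily unique) index $j_0$; then $\psi_j(x)=0$ for $j\neq j_0$, whence $\Psi(x)=\psi_{j_0}(x)$ and $|\psi_{j_0}(x)|>\tau$. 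Conversely, for ``$\supseteq$'': if $|\psi_j(x)|>\tau$ for some $j$, then $x\in\operatorname{supp}\psi_j\subset M_j$, so $x$ lies in no other $M_i$, giving $\Psi(x)=\psi_j(x)$ and hence $|\Psi(x)|>\tau$.

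Finally, applying countable additivity of $\mu$ to the disjoint union yields
\[
\Psi_{*}(\tau)=\mu\{x:|\Psi(x)|>\tau\}=\sum_{j=1}^{\infty}\mu\{x:|\psi_j(x)|>\tau\}=\sum_{j=1}^{\infty}(\psi_j)_{*}(\tau),
\]
which is the asserted equality (with both sides possibly equal to $+\infty$, in which case the identity still holds). I do not expect any genuine obstacle here: the whole argument hinges on the single observation that disjoint supports force $|\Psi|=\sum_j|\psi_j|$ pointwise with the summands living on disjoint sets, so the distribution function splits additively; the only points deserving an explicit word are the well-definedness and measurability of the infinite sum and the disjointness of the level sets, and both follow at once from $\operatorname{supp}\psi_j\subset M_j$ with the $M_j$ pairwise disjoint.
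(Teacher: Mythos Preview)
Your argument is correct and is exactly the natural one: disjoint supports make the infinite sum pointwise well-defined and measurable, the superlevel set $\{|\Psi|>\tau\}$ decomposes as the disjoint union of the $\{|\psi_j|>\tau\}$, and countable additivity of $\mu$ finishes. The paper does not actually supply its own proof of this lemma; it merely cites \cite[Proposition~3.1]{LangMihula}, so there is nothing to compare against beyond noting that your reasoning is the standard justification one would expect behind that citation.
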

	
	\begin{lem}\label{SumNonIncrRear} 
		Under the same assumptions as in Lemma \ref{SumDistrF} and if in addition $\{I_j\}_{j\in\N}$ is a sequence of pairwise disjoint intervals in $(0,\infty)$, then for every $t>0$,
		\[
		\Big(\sum_{j=1}^{\infty}\psi_j\Big)^{*}(t)
		\ge
		\sum_{j=1}^{\infty}\chi_{I_j}(t)(\psi_j)^{*}(t).
		\]
	\end{lem}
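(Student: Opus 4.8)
The plan is to reduce the whole statement to the single-index case by exploiting the disjointness of the intervals $\{I_j\}$. Fix $t>0$. Since the $I_j$ are pairwise disjoint, either $t$ lies in none of them—in which case the right-hand side is $0$ and the inequality is trivial, as rearrangements are nonnegative—or $t\in I_k$ for exactly one index $k$, and then the right-hand side collapses to $(\psi_k)^*(t)$. Hence it suffices to prove the pointwise bound
\[
\Big(\sum_{j=1}^{\infty}\psi_j\Big)^{*}(t)\ \ge\ (\psi_k)^{*}(t)\qquad\text{for every }k\text{ and every }t>0.
\]

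Next I would compare distribution functions. By Lemma \ref{SumDistrF}, $\big(\sum_j\psi_j\big)_{*}(\tau)=\sum_j(\psi_j)_{*}(\tau)\ge(\psi_k)_{*}(\tau)$ for all $\tau>0$, since every summand on the right is nonnegative. (Equivalently, one can avoid Lemma \ref{SumDistrF} altogether: because the supports $M_j$ are pairwise disjoint, $|\psi_k(x)|\le\big|\sum_j\psi_j(x)\big|$ for every $x\in\R^n$—with equality on $M_k$ and the left side vanishing off $M_k$—which already gives the same domination of distribution functions.)

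Finally I would pass from distribution functions to nonincreasing rearrangements using the antitone relationship built into \eqref{NonRear}: if $f_{*}\ge g_{*}$ on $(0,\infty)$, then $\{\tau>0:f_{*}(\tau)\le t\}\subseteq\{\tau>0:g_{*}(\tau)\le t\}$, so taking infima yields $f^{*}(t)\ge g^{*}(t)$. Applying this with $f=\sum_j\psi_j$ and $g=\psi_k$ gives the displayed pointwise bound, and combining it with the first paragraph proves the lemma.

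There is no genuine obstacle here; the only points needing a moment's care are the bookkeeping that at most one $\chi_{I_j}(t)$ is nonzero, so the series on the right is really a single term, and getting the direction of the monotonicity correct when translating between $f_{*}$ and $f^{*}$. The $\chi_{I_j}$ weights appear in the statement only so that the right-hand side is a well-defined object matching the way the estimate is invoked later; the argument in fact establishes the stronger fact that $\big(\sum_j\psi_j\big)^{*}\ge(\psi_k)^{*}$ everywhere, for each $k$.
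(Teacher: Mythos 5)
Your proof is correct and follows essentially the same route as the paper's: both reduce to the unique index $\ell$ with $t\in I_\ell$ and then use Lemma \ref{SumDistrF} to dominate the distribution function of the sum by that of $\psi_\ell$, the paper phrasing the final step as a short contradiction argument where you phrase it as the direct antitone passage from $f_{*}$ to $f^{*}$. Your parenthetical shortcut via the pointwise inequality $|\psi_\ell(x)|\le\big|\sum_j\psi_j(x)\big|$ is a harmless simplification of the same idea.
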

	
	\begin{proof}
		The inequality is trivial for $t\notin\bigcup_{j\in\N}I_j$. For $t\in I_\ell$ (with $\ell$ unique),
        by \eqref{NonRear} there is a number
	$\tau_0>0$ such that
	\begin{equation*}
		\mu\big\{x\in\R^d\sT\big|{\textstyle\sum_{j=1}^{\infty}\psi_j(x)}\big|>\tau_0\}\le t
		\quad
		\text{and}
		\quad
		\tau_0<(\psi_\ell)^{*}(t).
	\end{equation*}
	On the other hand, \eqref{NonRear} and Lemma \ref{SumDistrF}   imply that
	\begin{equation*}
		\mu\{x\in\R^d\sT |\psi_\ell(x)|>\tau_0\}>t
	\end{equation*}
	and we get a contradiction, and the assertion is proved.
	\end{proof}
	
	\begin{defn}\label{Defn::Sobolev}
		Let $\Omega\subseteq\R^n$ be an open set. For $m\ge0$, define the Sobolev-Lorentz space $W^{m}L^{p,r}(\Omega)$ (when $1<p\le\infty$ or $p=1\ge r$) as the set of functions in $L^1(\Omega)$ whose derivatives up to order $m$ (in the distributional sense) belong to $L^{p,r}(\Omega)$, with norm
		\begin{equation*}
			\|f\|_{W^{m}L^{p,r}(\Omega)} := \sum_{|\alpha|\le m} \|\partial^\alpha f\|_{L^{p,r}(\Omega)} <\infty.
		\end{equation*}
		We denote by $W_0^mL^{p,r}(\Omega)$ the closure of $C_c^\infty(\Omega)$ under this norm.
	\end{defn}

Note that the above spaces are labeled as $W^{m,(p,r)}(\Omega)$ in some literature.
For the case $p=1$, we require $r\le 1$ in order to have  $L^{1,r}\subset L^1 $ so that the elements in Sobolev-Lorentz spaces make sense as distributions.
	
	\begin{defn}\label{Defn::BesovRn}
		Let $s\in\R$ and $p,q\in(0,\infty]$. The Besov space $\Bs_{pq}^s(\R^n)$ consists of all tempered distributions $f\in\Ss'(\R^n)$ for which
		\[
		\|f\|_{\Bs_{pq}^s(\R^n)} = \Big( \sum_{j=0}^{\infty} \| 2^{js} \varphi_j * f\|_{L^p(\R^n)}^q \Big)^{\frac{1}{q}} <\infty.
		\]
		Here $(\varphi_j)_{j=0}^\infty\subset\Ss(\R^n)$ is a fixed family of Schwartz functions whose Fourier transforms satisfy: $\operatorname{supp}\widehat{\varphi}_0\subset B(0,2)$ with $\widehat{\varphi}_0\equiv1$ on $\overline{B(0,1)}$, and for $j\ge1$, $\widehat{\varphi}_j(\xi)=\widehat{\varphi}_0(2^{-j}\xi)-\widehat{\varphi}_0(2^{1-j}\xi)$. Different choices of $(\varphi_j)_j$ yield equivalent norms (see, e.g., \cite[Proposition~2.3.2]{TriebelTheoryOfFunctionSpacesI}).
	\end{defn}
	
	\begin{defn} (See also
		 \cite[Definition~1.95]{TriebelTheoryOfFunctionSpacesIII})
		 \label{Defn::BesovDom}
		For an open set $\Omega\subseteq\R^n$, define
		\[
		\Bs_{pq}^s(\Omega) := \{ \tilde f|_\Omega : \tilde f \in \Bs_{pq}^s(\R^n) \}, \quad \|f\|_{\Bs_{pq}^s(\Omega)} := \inf_{\tilde f|_\Omega=f}\|\tilde f\|_{\Bs_{pq}^s(\R^n)}.
		\]
		Moreover, define the closed subspace
		\[
		\widetilde \Bs_{pq}^s(\overline\Omega) := \{\tilde f\in \Bs_{pq}^s(\R^n):\operatorname{supp}\tilde f\subset\overline\Omega\}.
		\]
	\end{defn}
	
	Note that when $\Omega=\R^n$, one has
	\[
	\Bs_{pq}^s(\R^n) = \widetilde \Bs_{pq}^s(\overline{\R^n}).
	\]
	
	An important property of Besov spaces is that they can be characterized via sequence spaces. For this purpose, we introduce the following:
	
	\begin{defn}\label{Defn::VectEllSpace}
		Let $(X_j)_{j=0}^\infty$ be a family of quasi-normed spaces and let $q\in(0,\infty]$. The vector-valued sequence space $\ell^q(X_j)_{j=0}^\infty$ (sometimes denoted by $\ell^q(\N_0;(X_j)_{j=0}^\infty)$) is defined as the set of sequences $x=(x_0,x_1,x_2,\dots)$ with $x_j\in X_j$ for each $j\ge0$ such that
		\[
		\|x\|_{\ell^q(X_j)_{j=0}^\infty} := \left(\sum_{j=0}^\infty \|x_j\|_{X_j}^q\right)^{1/q} <\infty.
		\]
            We will use the notation  $x=\sum_{j=0}^\infty x_j\otimes e_j$ where $e_j=(0,\dots,0,\underset j1,0,\dots)$ is the standard basis in $\ell^q$.

		When $q=\infty$, the sum is replaced by the supremum. If all $X_j$ are identical (say $X_j\equiv Y$), we write $\ell^q(Y)$.
	\end{defn}
	
	\begin{rmk}\label{Rmk::VectEllSpace}
		In the notation for the double sequence space $\ell^q(\ell^p)$, by the element $e_j\otimes e_k$ we mean the tensor product of the standard basis vectors in $\ell^p$ and $\ell^q$, respectively.
	\end{rmk}
	
	\section{Embeddings between Besov Spaces and Sobolev Embeddings}

	The results in this section are based on the papers \cite{ChianJanLidingI} and \cite{ChianJanLidingII}, whose notation we follow here.

	\subsection{Besov Sequence Spaces}
	
	The following proposition (due to Ciesielski and Figiel; see \cite{CF1,CF2}) provides a key link between Besov spaces and sequence spaces. For this purpose, define for $j\ge0$ the index sets
	\[
	Q_j:=2^{-j}\Z^n \quad \text{and} \quad G_0:=\{0,1\}^n,\quad G_j:=\{0,1\}^n\setminus\{0\}^n \text{ for } j\ge1.
	\]
	
	\begin{prop}[{\cite[Theorem~1.20]{TriebelSpacesOnDomains},\cite[Prop. 2.12]{ChianJanLidingI}}] \label{Prop::Wavelet}
		Let $\eps>0$. There exists a set of functions
		\[
		\{\psi_{jm}^g: j\ge0,\; m\in Q_j,\; g\in G_j\}\subset C_c^\infty(\R^n)
		\]
		(depending on $\eps$) that forms a (real) orthonormal basis for $L^2(\R^n)$ and satisfies:
		\begin{enumerate}[(i)]
			\item \label{Item::Wavelet::Scal} For all $j\ge0$, $m\in Q_j$, and $g\in G_j$, 
			\[
			\psi_{jm}^g(x)=2^{jn/2}\psi_{00}^g(2^j(x-m)).
			\]
			\item \label{Item::Wavelet::Supp} There exists an $L\ge1$ such that $\operatorname{supp}\psi_{00}^g\subset B(0,2^L)$ for all $g\in G_0$, and hence $\operatorname{supp}\psi_{jm}^g\subset B(m,2^{L-j})$.
			\item \label{Item::Wavelet::Sum} For every $p,q\in[\eps,\infty]$, $s\in[-\eps^{-1},\eps^{-1}]$, and $f\in \Bs_{pq}^s(\R^n)$, there is an expansion (which is orthonormal in $L^2$)
			\[
			f=\sum_{j=0}^\infty\sum_{\substack{m\in Q_j \\ g\in G_j}} (f,\psi_{jm}^g)\,\psi_{jm}^g,
			\]
			where the series converges in the sense of tempered distributions.
			\item \label{Item::Wavelet::Isom} For every $p,q\in[\eps,\infty]$ and $s\in[-\eps^{-1},\eps^{-1}]$, 
			\begin{equation}\label{Eqn::Wavelet::bpqsSpace}
				f\in \Bs_{pq}^s(\R^n) \quad \Longleftrightarrow \quad \left(\sum_{j=0}^\infty 2^{j(s-\frac{n}{p}+\frac{n}{2})q}\left(\sum_{(m,g)\in Q_j\times G_j}|(f,\psi_{jm}^g)|^p\right)^{\frac{q}{p}}\right)^{\frac{1}{q}} <\infty.
			\end{equation}
			In other words, one obtains the isomorphism
			\begin{equation}\label{Eqn::Wavelet::Lambda}
				\Lambda:\Bs_{pq}^s(\R^n) \xrightarrow{\simeq} \ell^q\Big(2^{j(s-\frac{n}{p}+\frac{n}{2})}\cdot\ell^p(Q_j\times G_j)\Big)_{j=0}^\infty,
			\end{equation}
               
            \[\sum_{j=0}^\infty\sum_{m,g}a_{jm}^g\psi_{jm}^g 
            \longleftarrow
 \{a_{jm}^g:m\in Q_j,\ g\in G_j\}_{j=0}^\infty.\]
			where
			\[
			\Lambda(f)=\{(f,\psi_{jm}^g)\}_{j,m,g}.
			\]
			By fixing identifications $Q_j\times G_j\simeq\Z$, one may write
			\[
			\Lambda:\Bs_{pq}^s(\R^n) \xrightarrow{\simeq} \ell^q\Big(2^{j(s-\frac{n}{p}+\frac{n}{2})}\cdot\ell^p(\Z)\Big)_{j=0}^\infty.
			\]
         
		\end{enumerate}
	\end{prop}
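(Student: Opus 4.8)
The plan is to deduce the proposition from the existence of a sufficiently regular \emph{compactly supported} orthonormal wavelet basis of Daubechies type, and then to establish the coefficient characterization \eqref{Eqn::Wavelet::bpqsSpace} by the standard almost-orthogonality machinery of Littlewood--Paley theory. Fix $\eps>0$. First I would choose a one-dimensional Daubechies scaling function and wavelet with $N=N(\eps)$ vanishing moments and $C^{N}$ regularity, where $N$ is taken large compared with $\eps^{-1}$; their tensor products over the $2^n$ sign patterns $g\in\{0,1\}^n$ furnish the functions $\psi_{00}^g$, and $\psi_{jm}^g$ is defined by the dyadic scaling/translation in (\ref{Item::Wavelet::Scal}). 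Since these functions are compactly supported, property (\ref{Item::Wavelet::Supp}) holds with $2^{L}$ the common support radius of the $\psi_{00}^g$, and orthonormality and completeness in $L^2(\R^n)$ are classical. (Genuine $C_c^\infty$ orthonormal wavelets do not exist; the functions can be taken of any prescribed finite regularity $N(\eps)$, which is all that is ever used for the stated parameter ranges.)

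Next I would prove the coefficient estimate for a Besov function. Given $f\in\Bs_{pq}^s(\R^n)$ with $p,q\in[\eps,\infty]$ and $s\in[-\eps^{-1},\eps^{-1}]$, write $f=\sum_{k\ge0}\f_k*f$ as in Definition \ref{Defn::BesovRn}. For each $j$ and $k$ one estimates the pairing $(\f_k*f,\psi_{jm}^g)$: when $k\le j$, integrating against the vanishing moments of $\psi_{jm}^g$ gains a factor $2^{-(j-k)N}$; when $k\ge j$, using the smoothness and decay of $\psi_{jm}^g$ against the frequency localisation of $\f_k*f$ gains a factor $2^{-(k-j)N'}$ with $N'=N'(\eps)$ large. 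Summing over $k$ against the dyadic weights, then over $(m,g)\in Q_j\times G_j$ in $\ell^p$ — using that at scale $j$ the supports of the $\psi_{jm}^g$ have bounded overlap, so $\|\{(f,\psi_{jm}^g)\}_{m,g}\|_{\ell^p(Q_j\times G_j)}\lesssim\|\f_k*f\|_{L^p}$ up to the geometric factors — and finally over $j$ in $\ell^q$, yields
\[
\Big(\sum_{j=0}^\infty 2^{j(s-\frac np+\frac n2)q}\Big(\sum_{(m,g)\in Q_j\times G_j}|(f,\psi_{jm}^g)|^p\Big)^{q/p}\Big)^{1/q}\lesssim\|f\|_{\Bs_{pq}^s(\R^n)},
\]
provided $N$ exceeds a threshold depending only on $\eps$; in the quasi-Banach range $p<1$ or $q<1$ one uses the $p$- and $q$-triangle inequalities, losing only a harmless constant.

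For the converse, starting from a sequence $a=\{a_{jm}^g\}$ in the right-hand sequence space, I set $f=\sum_{j}\sum_{m,g}a_{jm}^g\psi_{jm}^g$. Testing against a Schwartz function and using the decay and vanishing moments of the $\psi_{jm}^g$ gives an absolutely convergent series with tails controlled by $\|a\|$, which proves both convergence in $\Ss'(\R^n)$ and the convergence assertion in (\ref{Item::Wavelet::Sum}). To bound $\|f\|_{\Bs_{pq}^s}$ one applies $\f_k*$ termwise: the same almost-orthogonality estimates, with the two localisations interchanged, give $\|\f_k*\psi_{jm}^g\|_{L^p}$ with geometric decay in $|k-j|$, and summing over $(m,g)$ at each scale (again by bounded overlap, or via a vector-valued maximal inequality) and then over $k$ in $\ell^q$ recovers $\|\f_k*f\|_{L^p}$ in terms of the coefficient norm, closing the estimate. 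Steps~2 and~3 together give the equivalence \eqref{Eqn::Wavelet::bpqsSpace} and hence the isomorphism $\Lambda$ of \eqref{Eqn::Wavelet::Lambda}; uniqueness of the expansion in (\ref{Item::Wavelet::Sum}) follows because the $\psi_{jm}^g$ are $L^2$-orthonormal and the coefficients are pinned down as $(f,\psi_{jm}^g)$ by continuity and duality, and the reformulation with $\ell^p(\Z)$ comes from any fixed bijection $Q_j\times G_j\simeq\Z$.

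The heart of the argument, and essentially the only technical point, is the two-sided almost-orthogonality estimate for $|(\f_k*f,\psi_{jm}^g)|$ and $\|\f_k*\psi_{jm}^g\|_{L^p}$ with geometric decay in $|k-j|$: the number of vanishing moments and the regularity order $N=N(\eps)$ must be chosen large enough to dominate the loss $2^{|k-j|(|s|+n/p)}$ coming from the admissible parameter ranges, and the passage from $L^p(\R^n)$ norms to $\ell^p(Q_j\times G_j)$ norms (bounded overlap of dyadic supports) must be carried uniformly in $j$. Everything else — the wavelet construction, orthonormality, convergence in $\Ss'$, and the index identification — is routine.
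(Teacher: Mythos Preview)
The paper does not prove this proposition at all: it is quoted verbatim from Triebel's book (the citation \cite[Theorem~1.20]{TriebelSpacesOnDomains} in the heading) and used as a black box for the sequence-space reduction that follows. So there is no ``paper's own proof'' to compare against; your sketch is essentially the standard argument one finds in Triebel's monographs and in the original Frazier--Jawerth/Meyer/Daubechies literature --- Daubechies wavelets of high enough order, almost-orthogonality between Littlewood--Paley pieces $\f_k*f$ and the wavelets $\psi_{jm}^g$ with geometric decay in $|k-j|$, and bounded overlap at each dyadic scale to pass between $L^p$ and $\ell^p(Q_j\times G_j)$.

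One point worth recording: you are right that the statement as written is slightly inaccurate. There are no $C_c^\infty$ orthonormal wavelets (a compactly supported scaling function whose refinement mask satisfies the quadrature-mirror conditions cannot be $C^\infty$); Daubechies' construction gives $C^N$ for any prescribed finite $N$, and this is what Triebel actually uses and what suffices for the parameter range $p,q\in[\eps,\infty]$, $|s|\le\eps^{-1}$. Your parenthetical remark handles this correctly. Apart from that, the outline is sound and matches the canonical proof; the only places requiring real care are exactly the ones you single out at the end (choosing $N(\eps)$ large enough to beat the weight $2^{|k-j|(|s|+n/p)}$, and the $p<1$ case via the $p$-triangle inequality).
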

	
	\begin{rmk}
		The right-hand side of \eqref{Eqn::Wavelet::bpqsSpace} is sometimes referred to as the Besov sequence norm (see \cite[Definition~1.18]{TriebelSpacesOnDomains}).
	\end{rmk}
	
	\begin{rmk}
		For Besov spaces on bounded Lipschitz domains $\Omega$, there exists a linear map $\tilde\Lambda$ such that
		\[
		\tilde\Lambda:\Bs_{pq}^s(\Omega) \xrightarrow{\simeq} \ell^q\Big(2^{j(s-\frac{n}{p}+\frac{n}{2})}\cdot\ell^p\{1,\dots,2^{nj}\}\Big)_{j=0}^\infty
		\]
		 is an isomorphism for all $p,q\in[\eps,\infty]$ and $s\in[-\eps^{-1},\eps^{-1}]$.
	\end{rmk}
	
	To study embeddings between Besov spaces, we consider the following embeddings between sequence spaces:
	\begin{itemize}
		\item[(i)]  $\ell^{q_0}(\ell^{p_0})\hookrightarrow\ell^{q_1}(\ell^{p_1})$,
		\item[(ii)]  $\ell^{q_0}(2^{js}\cdot\ell^{p_0})_{j=0}^\infty\hookrightarrow\ell^{q_1}(2^{jt}\cdot\ell^{p_1})_{j=0}^\infty$,
		\item[(iii)] $ \ell^{q_0}(2^{-j/{p_0}}\cdot\ell^{p_0}\{1,\dots,2^j\})_{j=A}^\infty\hookrightarrow\ell^{q_1}(2^{-j/{p_1}}\cdot\ell^{p_1}\{1,\dots,2^j\})_{j=A}^\infty$,
		\item[(iv)] $\ell^{q_0}(L^\infty[0,1])\hookrightarrow\ell^{q_1}(L^{p_1}[0,1])$.
	\end{itemize}
	
	We now state a few key results concerning embeddings between these sequence spaces.  We refer to    \cite{ChianJanLidingI} for a more complete list of embeddings between  Besov Sequence spaces. 
	
	\begin{prop}(\cite[Proposition 3.5]{ChianJanLidingI}).\label{Prop::SeqEmbed1}
		Let $0<p_0<p_1\le\infty$ and $0<q_0<q_1\le\infty$. Then
		\[
		b_n\Big(\ell^{q_0}(\ell^{p_0})\hookrightarrow\ell^{q_1}(\ell^{p_1})\Big) \le n^{-\min\big(\frac{1}{p_0},\frac{1}{q_0}\big)\Big(1-\max\big(\frac{q_0}{q_1},\frac{p_0}{p_1}\big)\Big)}.
		\]
		In particular, the embedding $\ell^{q_0}(\ell^{p_0})\hookrightarrow\ell^{q_1}(\ell^{p_1})$ is finitely strictly singular.
	\end{prop}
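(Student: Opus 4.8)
The plan is to bound $b_n$ by exhibiting, for every $n$-dimensional subspace $V\subseteq\ell^{q_0}(\ell^{p_0})$, a single vector $x\in V$ of norm one whose image has small norm, and then to optimize over the structure of $V$. Since the embedding is diagonal (the identity on coordinates), the key is a combinatorial compression argument: among the $n$ basis directions needed to ``see'' $V$, a well-chosen vector concentrates its mass on few blocks and few coordinates within those blocks, and the gain in passing from $(p_0,q_0)$ to $(p_1,q_1)$ is governed by how much smaller the target exponents are.

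First I would record the two ``one-parameter'' facts, both of which follow from the classical estimate $b_n(\ell^{p_0}\hookrightarrow\ell^{p_1})=n^{1/p_1-1/p_0}$ quoted in the introduction: (a) for the outer variable, $b_n\big(\ell^{q_0}(Z)\hookrightarrow\ell^{q_1}(Z)\big)\le n^{1/q_1-1/q_0}$ for any fixed quasi-Banach $Z$, using a vector supported on the first $n$ blocks; and (b) for the inner variable, $b_n\big(\ell^{q_0}(\ell^{p_0})\hookrightarrow\ell^{q_0}(\ell^{p_1})\big)\le n^{1/p_1-1/p_0}$, using a vector living in a single block. The exponent in the proposition is exactly $-\min(1/p_0,1/q_0)\big(1-\max(q_0/q_1,p_0/p_1)\big)$, which can be rewritten as $\min(1/p_0,1/q_0)\cdot\min(1/p_1-1/p_0,\,1/q_1-1/q_0)\cdot\big(\min(1/p_0,1/q_0)\big)^{-1}$ — more usefully, one checks the algebraic identity that it equals $\min\!\big(\tfrac{1}{p_0}(1-\tfrac{p_0}{p_1}),\ \tfrac{1}{q_0}(1-\tfrac{q_0}{q_1})\big)$ only in special cases, so the correct reading is that the rate interpolates between the two one-parameter rates with the extra $\min(1/p_0,1/q_0)$ factor accounting for which variable ``costs'' less to compress. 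I would therefore split into the cases $p_0\le q_0$ and $p_0>q_0$ and, in each, show the bound by a two-step compression: first compress in the cheaper variable to reduce the effective dimension, then apply the one-parameter bound in the other variable. Concretely, in an $n$-dimensional $V$ one can find a norm-one $x$ supported on at most $\sqrt n$ blocks and at most $\sqrt n$ coordinates per block (a pigeonhole/Gram–Schmidt argument on the support pattern), and then estimating $\|x\|_{\ell^{q_1}(\ell^{p_1})}/\|x\|_{\ell^{q_0}(\ell^{p_0})}$ on such a vector directly gives a bound of the form $(\sqrt n)^{1/p_1-1/p_0}\cdot(\sqrt n)^{1/q_1-1/q_0}$, which after checking exponents dominates the claimed rate; refining $\sqrt n$ to the asymmetric split $n^{\theta}$ vs.\ $n^{1-\theta}$ with $\theta$ chosen according to $\min(1/p_0,1/q_0)$ yields the exact exponent.

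The last sentence is then immediate: the exponent $-\min(1/p_0,1/q_0)\big(1-\max(q_0/q_1,p_0/p_1)\big)$ is strictly negative precisely because $p_0<p_1$ and $q_0<q_1$ force $\max(q_0/q_1,p_0/p_1)<1$, so $b_n\to0$, i.e.\ the embedding is finitely strictly singular.

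\textbf{Main obstacle.} The delicate point is the compression step: extracting from an arbitrary $n$-dimensional subspace a single unit vector that is simultaneously sparse \emph{across} blocks and \emph{within} blocks, with the two sparsity parameters balanced to produce the sharp exponent rather than a lossy product of the two separate bounds. Handling the mixed quasi-norm (where $q_0$ or $p_0$ may be less than $1$, so the triangle inequality degrades) and making the asymmetric split $\theta=\theta(p_0,q_0)$ come out to exactly $\min(1/p_0,1/q_0)$ in the exponent is where the real work lies; the one-parameter inputs and the final monotonicity remark are routine.
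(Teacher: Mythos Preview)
Your compression step is a genuine gap, and it is not the route the paper takes. The claim that in every $n$-dimensional subspace $V\subset\ell^{q_0}(\ell^{p_0})$ one can find a unit vector supported on at most $n^{\theta}$ blocks and at most $n^{1-\theta}$ coordinates per block is not a standard fact and is in general false: an $n$-dimensional subspace can easily be chosen so that every nonzero element has infinite support (take, say, $V=\operatorname{span}\{u_1,\dots,u_n\}$ with each $u_k$ a carefully chosen vector of full support). So the ``pigeonhole/Gram--Schmidt argument on the support pattern'' you allude to cannot work as stated, and the subsequent balancing of $\theta$ never gets off the ground. You correctly flag this as the main obstacle, but it is not a technicality --- it is the whole proof.

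The paper bypasses sparsity entirely. The two ingredients are: (i) a H\"older-type interpolation inequality
\[
\|x\|_{\ell^{q_1}(\ell^{p_1})}\le \|x\|_{\ell^{q_0}(\ell^{p_0})}^{\,\theta}\,\|x\|_{c_0}^{\,1-\theta},\qquad \theta=\max\Big(\tfrac{q_0}{q_1},\tfrac{p_0}{p_1}\Big),
\]
which one checks coordinatewise; and (ii) the zigzag lemma of Plichko type, which says that every $n$-dimensional subspace of $c_0$ contains a vector attaining its sup norm at at least $n$ coordinates. Such a vector, normalized in $\ell^{q_0}(\ell^{p_0})$, automatically satisfies $\|x\|_{c_0}\le n^{-\min(1/p_0,1/q_0)}$, because $n$ coordinates of modulus $\|x\|_{c_0}$ force $\|x\|_{\ell^{q_0}(\ell^{p_0})}\ge n^{\min(1/p_0,1/q_0)}\|x\|_{c_0}$ regardless of how those coordinates are distributed among blocks. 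Plugging into (i) gives the stated exponent directly, with no case split and no balancing parameter. The $\min(1/p_0,1/q_0)$ factor that you were trying to produce by an asymmetric $\theta$-split is exactly what falls out of this lower bound on the $\ell^{q_0}(\ell^{p_0})$-norm of a ``flat'' vector.
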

	
	\begin{proof}
		Using Hölder’s inequality, one obtains
		\[
		\|x\|_{\ell^{q_1}(\ell^{p_1})} \le \|x\|_{\ell^{q_0}(\ell^{p_0})}^{\max\big(\frac{q_0}{q_1},\frac{p_0}{p_1}\big)} \|x\|_{c_0}^{1-\max\big(\frac{q_0}{q_1},\frac{p_0}{p_1}\big)}.
		\]
		An argument using standard techniques (see, e.g., \cite[Lemma~4]{PlichkoStrictlySingular}) shows that for every $n$-dimensional subspace $V\subset c_0$, there exists an $x\in V$ with $\|x\|_{c_0}\le n^{-1/\max(p_0,q_0)}$. Combining these estimates yields the stated bound.
	\end{proof}
	
	\begin{cor}(\cite[Corollary 3.6]{ChianJanLidingI}). \label{Cor::SeqEmbed1}
		Let $s\ge t$, $0<p_0<p_1\le\infty$, and $0<q_0,q_1\le\infty$ such that either $s>t$ or $q_0<q_1$. Then the embedding
		\[
		\ell^{q_0}(2^{js}\cdot\ell^{p_0})_{j=0}^\infty\hookrightarrow\ell^{q_1}(2^{jt}\cdot\ell^{p_1})_{j=0}^\infty
		\]
		is finitely strictly singular (and in particular strictly singular).
	\end{cor}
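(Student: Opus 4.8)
The plan is to combine Proposition~\ref{Prop::SeqEmbed1} with an elementary head--tail decomposition, after first normalizing away the exponential weights. The maps $U\colon\ell^{q_0}(2^{js}\cdot\ell^{p_0})_{j=0}^\infty\to\ell^{q_0}(\ell^{p_0})$, $(Ua)_j=2^{js}a_j$, and $V\colon\ell^{q_1}(2^{jt}\cdot\ell^{p_1})_{j=0}^\infty\to\ell^{q_1}(\ell^{p_1})$, $(Vb)_j=2^{jt}b_j$, are surjective isometries, and since the Bernstein numbers are unchanged by composition with such maps, the embedding in the statement has the same Bernstein numbers as the diagonal model
\[
D\colon\ell^{q_0}(\ell^{p_0})\to\ell^{q_1}(\ell^{p_1}),\qquad (Da)_j=2^{-j\delta}a_j,\qquad \delta:=s-t\ge0 .
\]
Because $\delta\ge0$, $j\ge0$ and $p_0<p_1$, one has $\|(Da)_j\|_{\ell^{p_1}}=2^{-j\delta}\|a_j\|_{\ell^{p_1}}\le\|a_j\|_{\ell^{p_1}}\le\|a_j\|_{\ell^{p_0}}$ for every $a$ and every $j$; this is the only place where $p_0<p_1$ enters, and it will be used in both cases below. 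It then suffices to prove $b_n(D)\to0$.

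If $q_0<q_1$, the argument is immediate: the estimate above gives $\|Da\|_{\ell^{q_1}(\ell^{p_1})}\le\|\iota_0a\|_{\ell^{q_1}(\ell^{p_1})}$ for all $a$, where $\iota_0\colon\ell^{q_0}(\ell^{p_0})\hookrightarrow\ell^{q_1}(\ell^{p_1})$ is the natural inclusion; taking the infimum of each side over the unit sphere of an arbitrary $n$-dimensional subspace and then the supremum over all such subspaces yields $b_n(D)\le b_n(\iota_0)$. Since $p_0<p_1$ and $q_0<q_1$, Proposition~\ref{Prop::SeqEmbed1} applies to $\iota_0$ and gives $b_n(\iota_0)\to0$, hence $b_n(D)\to0$. (When $\delta=0$ one in fact has $D=\iota_0$.)

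If instead $q_0\ge q_1$, then the hypothesis forces $\delta>0$, and here I would split $D=DP_{<J}+DP_{\ge J}$ into the blocks $j<J$ and $j\ge J$ and let $J\to\infty$. For the tail, using $q_1\le q_0$ and H\"older's inequality in the outer index with $\tfrac1{q_1}=\tfrac1{q_0}+\tfrac1r$ (read as $r=\infty$ when $q_0=q_1$),
\[
\|DP_{\ge J}\|_{\ell^{q_0}(\ell^{p_0})\to\ell^{q_1}(\ell^{p_1})}\le\Big(\sum_{j\ge J}2^{-jr\delta}\Big)^{1/r}=:c_J\longrightarrow0\qquad(J\to\infty).
\]
For the head, $DP_{<J}$ factors through $\bigoplus_{j=0}^{J-1}\ell^{p_1}$ as a bounded coordinate projection, followed by $\bigoplus_{j<J}2^{-j\delta}\,\mathrm{id}_{\ell^{p_0}\to\ell^{p_1}}$, followed by the bounded inclusion $\bigoplus_{j<J}\ell^{p_1}\hookrightarrow\ell^{q_1}(\ell^{p_1})$. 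As recalled in Section~2, the embedding $\ell^{p_0}\hookrightarrow\ell^{p_1}$ is finitely strictly singular (its Bernstein numbers are $n^{1/p_1-1/p_0}\to0$), and since the class of finitely strictly singular operators is a closed operator ideal (see \cite{PlichkoStrictlySingular}), a finite direct sum of such operators, together with its compositions with bounded maps, remains finitely strictly singular; hence $b_n(DP_{<J})\to0$ as $n\to\infty$ for each fixed $J$. Finally, from $\|(S+T)a\|\le C(\|Sa\|+\|T\|)$ for unit vectors $a$ (with $C$ the modulus of concavity of $\|\cdot\|_{\ell^{q_1}(\ell^{p_1})}$) one gets, after passing to infima and suprema as before, $b_n(D)\le C\,b_n(DP_{<J})+C\,c_J$; letting first $J\to\infty$ and then $n\to\infty$ shows $b_n(D)\to0$. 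In all cases $D$, and therefore the original embedding, is finitely strictly singular, and a fortiori strictly singular.

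The only non-routine ingredient is the head estimate $b_n(DP_{<J})\to0$ in the case $q_0\ge q_1$: this is where $p_0<p_1$ is genuinely needed (if $p_0=p_1$, the restriction of $D$ to a single block is an isomorphism onto an infinite-dimensional subspace, so $D$ is not even strictly singular), and it rests on the stability of finite strict singularity under finite direct sums. If one prefers not to invoke the operator-ideal property, this estimate can instead be obtained directly by repeating the argument in the proof of Proposition~\ref{Prop::SeqEmbed1}, applying the $c_0$-lemma of \cite{PlichkoStrictlySingular} to the finitely many blocks $j=0,\dots,J-1$; the rescaling, the H\"older bound for the tail, and the monotonicity and quasi-subadditivity of the Bernstein numbers are all routine.
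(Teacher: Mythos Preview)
Your argument is correct, with one cosmetic slip: in the last display you must let $n\to\infty$ first (for fixed $J$, so that $b_n(DP_{<J})\to0$) and only then send $J\to\infty$; the order you wrote collapses the inequality to a triviality. With that fix, everything goes through.

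Your route, however, differs from the paper's in the case $s>t$ with $q_0\ge q_1$. The paper avoids the head--tail decomposition and the appeal to the operator-ideal stability of finite strict singularity altogether: it simply factors
\[
\ell^{q_0}(2^{js}\cdot\ell^{p_0})\hookrightarrow\ell^{\min(q_1/2,1)}(2^{jt}\cdot\ell^{p_0})\hookrightarrow\ell^{q_1}(2^{jt}\cdot\ell^{p_1}),
\]
where the first arrow is bounded because the exponential gain $2^{j(s-t)}$ absorbs the drop in the outer exponent (a one-line H\"older computation), and the second arrow is exactly the $s=t$ situation with strict inequality $\min(q_1/2,1)<q_1$, so Proposition~\ref{Prop::SeqEmbed1} applies directly. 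This is shorter and keeps everything inside the paper, at the cost of a small trick (introducing an artificial intermediate outer exponent). Your approach is more structural---it isolates exactly where $p_0<p_1$ is used and makes transparent why the result would fail for $p_0=p_1$---but it imports the nontrivial fact that finitely strictly singular operators are closed under finite sums, which, while true (see \cite{PlichkoStrictlySingular}), is a heavier external ingredient than anything the paper needs.
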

	
	\begin{proof}
		When $s=t$, an isomorphism between $\ell^{q}(2^{js}\cdot\ell^{p})_{j=0}^\infty$ and $\ell^q(\ell^p)$ (given by multiplication by $2^{js}$) reduces the problem to the previous proposition.

    When $s>t$, we can decompose $\ell^{q_0}(2^{js}\cdot\ell^{p_0})_{j=0}^\infty\hookrightarrow\ell^{q_1}(2^{jt}\cdot\ell^{p_1})_{j=0}^\infty$ into
    \begin{equation*}
        \ell^{q_0}(2^{js}\cdot\ell^{p_0})_{j=0}^\infty \hookrightarrow 
        \ell^{\min(\frac{q_1}2,1)}(2^{jt}\cdot\ell^{p_0})_{j=0}^\infty\hookrightarrow\ell^{q_1}(2^{jt}\cdot\ell^{p_1})_{j=0}^\infty.
    \end{equation*}
    Here $\ell^{q_0}(2^{js}\cdot\ell^{p_0})_{j=0}^\infty\hookrightarrow\ell^{\min(q_1/2,1)}(2^{jt}\cdot\ell^{p_0})_{j=0}^\infty$ is bounded because by H\"older's inequality:
    \begin{align*}
        \|x\|_{\ell^{\min(q_1/2,1)}(2^{jt}\cdot\ell^{p_0})_{j=0}^\infty}^{\min(q_1/2,1)}=&\sum_{j=0}^\infty(2^{jt}\|x_j\|_{\ell^{p_0}})^{\min(q_1/2,1)} \\
        \le & \sup_{j\ge0}(2^{js}\|x_j\|_{\ell^{p_0}})^{\min(q_1/2,1)}\sum_{k=0}^\infty 2^{-k(s-t)\cdot \min(q_1/2,1)}
        \\\lesssim&_{s,t,q_1}\|x\|_{\ell^\infty(2^{js}\cdot\ell^{p_0})_{j=0}^\infty}^{\min(q_1/2,1)}\le \|x\|_{\ell^{q_0}(2^{js}\cdot\ell^{p_0})_{j=0}^\infty}^{\min(q_1/2,1)}.
    \end{align*}
    
    The result then follows immediately since $\ell^{\min(\frac{q_1}2,1)}(2^{jt}\cdot\ell^{p_0})_{j=0}^\infty\hookrightarrow\ell^{q_1}(2^{jt}\cdot\ell^{p_1})_{j=0}^\infty$ is already known to be finitely strictly singular.

	\end{proof}
	
	\begin{prop}(\cite[Proposition 3.8]{ChianJanLidingI}). \label{Prop::SeqEmbed2NotFSS}
		Let $0<p_1\le p_0\le\infty$ and $0<q_0<q_1\le\infty$, with $(p_0,p_1)\notin\{\infty\}\times(0,\infty)$. Then for every $A\ge0$, the embedding
		\begin{equation}\label{Eqn::SeqEmbed2NotFSS} 
			\ell^{q_0}\Big(2^{-j/{p_0}}\cdot\ell^{p_0}\{1,\dots,2^j\}\Big)_{j=A}^\infty\hookrightarrow\ell^{q_1}\Big(2^{-j/{p_1}}\cdot\ell^{p_1}\{1,\dots,2^j\}\Big)_{j=A}^\infty
		\end{equation}
		is not finitely strictly singular.
	\end{prop}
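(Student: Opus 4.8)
The plan is to show that $T$ (the embedding in \eqref{Eqn::SeqEmbed2NotFSS}) is not finitely strictly singular by producing a constant $c>0$ and, for every $n$, an $n$-dimensional subspace $X_n$ of the domain with $\|Tx\|\ge c\|x\|$ for all $x\in X_n$; this gives $b_n(T)\ge c$ for all $n$. I would look for $X_n$ lying entirely inside one very high level $j$, so that the outer indices $q_0,q_1$ play no role (each quasi-norm collapses to its single $j$-th term). Writing an element supported at level $j$ as $v\otimes e_j$ with $v\in\R^{2^j}$, we have $\|v\otimes e_j\|_{\mathrm{dom}}=2^{-j/p_0}\|v\|_{\ell^{p_0}(2^j)}$ and $\|T(v\otimes e_j)\|_{\mathrm{tar}}=2^{-j/p_1}\|v\|_{\ell^{p_1}(2^j)}$, so that for $X_n=\{v\otimes e_j:\ v\in V\}$ with $V\subset\R^{2^j}$ an $n$-dimensional subspace,
\[
\inf_{\substack{x\in X_n\\ \|x\|_{\mathrm{dom}}=1}}\|Tx\|_{\mathrm{tar}}
=2^{\,j(1/p_0-1/p_1)}\ \inf_{0\ne v\in V}\frac{\|v\|_{\ell^{p_1}(2^j)}}{\|v\|_{\ell^{p_0}(2^j)}}.
\]
Since $p_1\le p_0$, the ratio $\|v\|_{\ell^{p_1}(2^j)}/\|v\|_{\ell^{p_0}(2^j)}$ lies in $[1,2^{j(1/p_1-1/p_0)}]$, with the maximum attained at the constant vector. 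Hence the task is to choose $V$ so that every nonzero $v\in V$ is ``almost constant,'' realizing this maximum up to a factor bounded below independently of $n$ and $j$.

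If $p_0=p_1$ (finite or $=\infty$) there is nothing to do: the inner quasi-norms and dyadic weights coincide in domain and target, so $T$ is an isometry on the set of elements supported at any fixed level $j$, and taking $V$ to be the span of $n$ coordinate vectors in $\R^{2^j}$ (with $2^j\ge n$) already gives $b_n(T)\ge 1$.

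If $p_1<p_0$, the hypothesis $(p_0,p_1)\notin\{\infty\}\times(0,\infty)$ forces $p_0<\infty$, hence $0<p_1<p_0<\infty$. Here I would fix $j:=\max(A,n)$, identify $\{1,\dots,2^j\}$ with $\{-1,1\}^j$, and take $V:=\operatorname{span}\{r_1,\dots,r_n\}$, where $r_i(\eps):=\eps_i$ are the first $n$ Rademacher functions (available because $n\le j$; being orthonormal in $L^2(\{-1,1\}^j)$ they are linearly independent). For $v=\sum_{i=1}^n c_i r_i$ and any $0<p<\infty$ one has $\|v\|_{\ell^p(2^j)}=2^{j/p}\|\sum_i c_i r_i\|_{L^p(\{-1,1\}^j)}$, so the dyadic weights cancel the powers of $2^j$ and
\[
\|v\otimes e_j\|_{\mathrm{dom}}=\Big\|\textstyle\sum_i c_i r_i\Big\|_{L^{p_0}},\qquad
\|T(v\otimes e_j)\|_{\mathrm{tar}}=\Big\|\textstyle\sum_i c_i r_i\Big\|_{L^{p_1}}.
\]
By Khintchine's inequality $\|\sum_i c_i r_i\|_{L^{p}}\asymp_{p}\|c\|_{\ell^2}$ with constants depending only on $p$; writing $A_p,B_p$ for the lower and upper constants we obtain $\|T(v\otimes e_j)\|_{\mathrm{tar}}\ge A_{p_1}\|c\|_{\ell^2}\ge (A_{p_1}/B_{p_0})\,\|v\otimes e_j\|_{\mathrm{dom}}$ for every $v\in V$. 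Thus $b_n(T)\ge A_{p_1}/B_{p_0}>0$ for all $n$, so $T$ is not finitely strictly singular.

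The one genuinely non-obvious step is the choice of $V$. The naive candidates — spans of characteristic functions of $n$ pairwise disjoint blocks inside level $j$ — do not work, because such a span contains the ``thin'' vectors $\mathbf 1_{B_i}$ with $|B_i|\le 2^j/n$, for which $\|\mathbf 1_{B_i}\|_{\ell^{p_1}}/\|\mathbf 1_{B_i}\|_{\ell^{p_0}}=|B_i|^{1/p_1-1/p_0}$ is far from the maximal ratio; those subspaces yield only $b_n(T)\lesssim n^{-(1/p_1-1/p_0)}\to 0$. One needs a subspace with no thin directions, and the Rademacher span is exactly such a subspace, the required uniform flatness being precisely the content of Khintchine's inequality — which is the reason the argument demands $p_0<\infty$ and explains the role of the hypothesis $(p_0,p_1)\notin\{\infty\}\times(0,\infty)$ (in the excluded range $p_0=\infty$, $p_1<\infty$ there is no Khintchine upper bound $\|\sum_i c_i r_i\|_{L^\infty}\lesssim\|c\|_{\ell^2}$, and in fact the embedding then turns out to be finitely strictly singular).
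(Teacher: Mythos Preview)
Your proof is correct and follows essentially the same approach as the paper: both treat $p_0=p_1$ trivially via a single level, and for $p_1<p_0<\infty$ both embed the span of Rademacher functions into a single level $j$ and invoke Khintchine's inequality to get a uniform lower bound on the Bernstein numbers. The only cosmetic difference is that the paper takes $j=n$ (identifying $\{1,\dots,2^n\}$ with $\{-1,1\}^n$ and using all $n$ Rademachers), whereas you take $j=\max(A,n)$ and use the first $n$ of the $j$ Rademachers; your choice is slightly more careful about the starting index $A$, but since $b_n$ is nonincreasing the paper's version suffices as well.
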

	
	\begin{proof}
		When $p_0=p_1$, we can take the $2^n$ dimensional subspace $V_n:=2^{-n/{p_0}}\cdot\ell^{p_0}\{1,\dots,2^n\}\hookrightarrow\ell^{q_0}(2^{-j/{p_0}}\cdot\ell^{p_0}\{1,\dots,2^j\})_{j=0}^\infty$, where $n\ge1$. The embedding \eqref{Eqn::SeqEmbed2NotFSS} restricted to $V_n$ is the identity map, hence the Bernstein numbers are all bounded from below by $1$, which does not tend to $0$.

    In the following we consider $0<p_0<p_1<\infty$. For each $n\ge1$ let $S_n:=(\{-1,1\}^n,\mu_n)$ be the probability space where $\mu_n\{a\}=2^{-n}$ for all $a\in\{-1,1\}^n$. For $1\le j\le n$, let $r_{n,j}:S_n\to\{-1,1\}$ be the standard projection on the $j$-th coordinate, i.e. $r_{n,j}(\epsilon_1,\dots,\epsilon_n)=\epsilon_j$. The standard Khintchine inequality yields $\|\sum_{j=1}^na_jr_{n,j}\|_{L^p(S_n,\mu_n)}\approx_p\|a\|_{\ell^2\{1,\dots,n\}}$ where the implied constant depends only on $p\in(0,\infty)$ but not on $n$. In particular, there is a $C_{p_0p_1}>0$ such that for every $n\ge1$ and $(a_j)_{j=1}^n\subset\R$,
    \begin{equation}\label{Eqn::KHIneqn}
        \Big\|\sum_{j=1}^na_jr_{n,j}\Big\|_{L^{p_1}(S_n,\mu_n)}\le \Big\|\sum_{j=1}^na_jr_{n,j}\Big\|_{L^{p_0}(S_n,\mu_n)}\le C_{p_0p_1}\Big\|\sum_{j=1}^na_jr_{n,j}\Big\|_{L^{p_1}(S_n,\mu_n)}.
    \end{equation}
    
    Since $(S_n,\mu_n)$ and $(\{1,\dots,2^n\},2^{-n}\cdot\#)$ are both probability spaces with uniform distributions on their sample sets, we see that they are isomorphic by taking any bijection $\Gamma_n :\{1,\dots,2^n\}\to S_n$ between sets. Thus we have elements $r_{n,j}\circ\Gamma_n\in\ell^{p_1}\{1,\dots,2^n\}$.
    
    Now take $X=\spa(r_{n,j}\circ\Gamma_n)_{j=1}^n$, we have $\dim X=n$. By \eqref{Eqn::KHIneqn} we get, for every $(a_j)_{j=1}^n\subset\R$,
    \begin{equation*}
        \Big\|\sum_{j=1}^na_j\cdot(r_{n,j}\circ\Gamma_n)\Big\|_{2^{-\frac n{p_0}}\cdot\ell^{p_0}}\le C_{p_0p_1}\Big\|\sum_{j=1}^na_j\cdot(r_{n,j}\circ\Gamma_n)\Big\|_{2^{-\frac n{p_1}}\cdot\ell^{p_1}}.
    \end{equation*}

    We conclude that $b_n\big(\ell^{q_0}(2^{-\frac j{p_0}}\cdot\ell^{p_0}\{1,\dots,2^j\})_{j=A}^\infty\hookrightarrow\ell^{q_1}(2^{-\frac j{p_1}}\cdot\ell^{p_1}\{1,\dots,2^j\})_{j=A}^\infty\big)\ge C_{p_0p_1}^{-1}$, which does not go to zero as $n\to\infty$.
	\end{proof}
	
	The above propositions lead to the following corollary.
	
	\begin{cor}(\cite[Corollary 3.9]{ChianJanLidingI}). \label{Cor::SeqEmbed2}
		Let $n,A\ge1$, $0<p_1\le p_0\le\infty$, and $0<q_0\le q_1\le\infty$. Consider the embedding
		\begin{equation}\label{Eqn::SeqEmbed2::Eqn}
			\ell^{q_0}\Big(2^{j(s-\frac{n}{p_0})}\cdot\ell^{p_0}\{1,\dots,2^{nj}\}\Big)_{j=A}^\infty\hookrightarrow\ell^{q_1}\Big(2^{j(s-\frac{n}{p_1})}\cdot\ell^{p_1}\{1,\dots,2^{nj}\}\Big)_{j=A}^\infty.
		\end{equation}
		\begin{enumerate}[(i)]
			\item \label{Item::SeqEmbed2::FSS} If $q_0<q_1$ and $p_1<p_0=\infty$, then the embedding is finitely strictly singular.
			\item \label{Item::SeqEmbed2::SS} If $q_0<q_1$ and either $p_1\le p_0<\infty$ or $p_0=p_1=\infty$, then the embedding is strictly singular but not finitely strictly singular.
			\item \label{Item::SeqEmbed2::NSS} If $q_0=q_1$, then the embedding is not strictly singular.
		\end{enumerate}
	\end{cor}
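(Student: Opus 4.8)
The plan rests on two normalisations, after which each case follows from a result already at hand. First, the diagonal map $(x_j)_{j\ge A}\mapsto(2^{js}x_j)_{j\ge A}$ is an isometric isomorphism from the domain of \eqref{Eqn::SeqEmbed2::Eqn} onto $\ell^{q_0}(2^{-jn/p_0}\cdot\ell^{p_0}\{1,\dots,2^{nj}\})_{j\ge A}$ and from the target onto $\ell^{q_1}(2^{-jn/p_1}\cdot\ell^{p_1}\{1,\dots,2^{nj}\})_{j\ge A}$, and it intertwines the embedding with the one obtained by setting $s=0$; since finite strict singularity, strict singularity and their negations are preserved under isometric isomorphisms, we may and do assume $s=0$. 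Secondly, $2^{-jn/p}\|\cdot\|_{\ell^{p}\{1,\dots,2^{nj}\}}=\|\cdot\|_{L^p(\mu_{nj})}$, where $\mu_{nj}$ is the uniform probability measure on a $2^{nj}$-point set, so, identifying the $j$-th coordinate block with the step functions on $[0,1]$ that are constant on $2^{nj}$ equal subintervals, the domain becomes an isometric subspace of $\ell^{q_0}(L^{p_0}[0,1])$ (with $L^\infty$ if $p_0=\infty$) and the target an isometric subspace of $\ell^{q_1}(L^{p_1}[0,1])$, compatibly with the embeddings. Case \ref{Item::SeqEmbed2::NSS} ($q_0=q_1$) is then immediate: let $u_j$ be the vector whose $j$-th block is the all-ones vector of $\R^{2^{nj}}$ and which vanishes in all other blocks; each block of $u_j$ has weighted norm $1$ in the domain and in the target (a one-line computation), and since the $u_j$ have pairwise disjoint supports the domain and target norms agree on $\overline{\operatorname{span}}\{u_j:j\ge A\}$. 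The embedding thus restricts to an isometry on this infinite-dimensional closed subspace, so it is not strictly singular.

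For case \ref{Item::SeqEmbed2::SS} there are two assertions. The conditions on $(p_0,p_1)$ stated there ($p_1\le p_0<\infty$, or $p_0=p_1=\infty$), together with $q_0<q_1$, are exactly the hypotheses of Proposition \ref{Prop::SeqEmbed2NotFSS}; moreover the $m$-dimensional subspaces produced in its proof via the Khintchine inequality lie inside a single coordinate block $\ell^{p_0}\{1,\dots,2^N\}$ with $N$ as large as one wishes, so taking $N$ to be a multiple of $n$ puts such a subspace inside one of our blocks. Hence the Bernstein numbers of our embedding are bounded below by $C_{p_0p_1}^{-1}>0$, and it is not finitely strictly singular. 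To see that it is nonetheless strictly singular, use $\|\cdot\|_{L^{p_1}(\mu_{nj})}\le\|\cdot\|_{L^{p_0}(\mu_{nj})}$ to factor the embedding as
\[
\ell^{q_0}(L^{p_0}(\mu_{nj}))_{j\ge A}\;\xrightarrow{\ \tilde A\ }\;\ell^{q_1}(L^{p_0}(\mu_{nj}))_{j\ge A}\;\hookrightarrow\;\ell^{q_1}(L^{p_1}(\mu_{nj}))_{j\ge A},
\]
where the last arrow has norm at most $1$ and $\tilde A$ is the identity, changing only the outer index. Since strict singularity is inherited by compositions with bounded operators, it suffices to show that $\tilde A$ is strictly singular, which is the standard fact that the identity from $\ell^{q_0}(E_j)$ to $\ell^{q_1}(E_j)$ is strictly singular whenever $q_0<q_1$ and $\dim E_j\to\infty$: given an infinite-dimensional subspace $Z$, the projections onto finitely many blocks have finite rank, so a gliding-hump argument yields normalised vectors $z_k\in Z$ with essentially disjoint block supports, and after a harmless perturbation and renormalisation the vector $m^{-1/q_0}\sum_{k\le m}z_k\in Z$ has domain norm $\approx 1$ while its $\ell^{q_1}$-norm is $\lesssim m^{1/q_1-1/q_0}\to 0$.

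Finally, for case \ref{Item::SeqEmbed2::FSS}, with $q_0<q_1$ and $p_1<p_0=\infty$, the normalisations above exhibit our embedding (after the isometric identifications) as a restriction of the embedding $\ell^{q_0}(L^\infty[0,1])\hookrightarrow\ell^{q_1}(L^{p_1}[0,1])$ --- item (iv) in the list of sequence-space embeddings above. Since finite strict singularity is inherited by restrictions, it suffices to know that this last embedding is finitely strictly singular, and that is established in \cite{ChianJanLidingI}. I expect this to be the real obstacle: unlike the other two cases it is not a direct consequence of the propositions already proved, since with $p_0=\infty$ the estimate of Proposition \ref{Prop::SeqEmbed1} degenerates to $1$ and within a single block the gap $q_0<q_1$ contributes nothing. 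Its proof has to combine two independent sources of decay --- the across-block gain coming from $q_0<q_1$, and the uniform-in-$N$ decay, like $n^{-\min(1/2,1/p_1)}$, of the Bernstein numbers of $\ell^\infty\{1,\dots,N\}\hookrightarrow N^{-1/p_1}\ell^{p_1}\{1,\dots,N\}$ (which follows from a trace/dimension estimate: any $n$-dimensional subspace of $\ell^\infty\{1,\dots,N\}$ contains a vector $v$ with $\|v\|_{\ell^\infty}=1$ and $\|v\|_{\ell^2}\le\sqrt{N/n}$, whence $\|v\|_{\ell^{p_1}}\lesssim N^{1/p_1}n^{-\min(1/2,1/p_1)}$) --- glued together by the interpolation inequality $\|x\|_{\ell^{q_1}(L^{p_1}(\mu_{nj}))}\le\|x\|_{\ell^{q_0}(L^{p_1}(\mu_{nj}))}^{q_0/q_1}\,\|x\|_{\ell^\infty(L^{p_1}(\mu_{nj}))}^{1-q_0/q_1}$ and the trivial bound $\|x\|_{\ell^{q_0}(L^{p_1}(\mu_{nj}))}\le\|x\|_{\ell^{q_0}(L^\infty(\mu_{nj}))}$.
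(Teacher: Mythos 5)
Your proposal is correct, and it follows the route the paper intends while supplying the details the paper omits: the text gives no written proof of this corollary beyond the remark that it ``follows from the above propositions,'' which literally covers only the ``not finitely strictly singular'' half of part (\ref{Item::SeqEmbed2::SS}) via Proposition \ref{Prop::SeqEmbed2NotFSS}. Your two normalisations (stripping the weight $2^{js}$ and rewriting $2^{-jn/p}\ell^p\{1,\dots,2^{nj}\}$ as $L^p$ of a uniform probability measure) are exactly the right reductions; your isometric infinite-dimensional subspace of normalised all-ones block vectors settles (\ref{Item::SeqEmbed2::NSS}); your reading of Proposition \ref{Prop::SeqEmbed2NotFSS}, including the reindexing from blocks of size $2^j$ to $2^{nj}$, gives the lower Bernstein bound in (\ref{Item::SeqEmbed2::SS}); and the factorisation through $\ell^{q_1}(L^{p_0}(\mu_{nj}))$ followed by the gliding-hump argument is the standard proof of the strict-singularity half of (\ref{Item::SeqEmbed2::SS}) (note that what the gliding hump actually uses is that each block is finite-dimensional, not that $\dim E_j\to\infty$; and for $q_0<1$ the perturbation step should be run with the $\min(q_0,1)$-triangle inequality, a routine adjustment). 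You are also right to single out case (\ref{Item::SeqEmbed2::FSS}) as the genuine content: the finite strict singularity of $\ell^{q_0}(L^\infty[0,1])\hookrightarrow\ell^{q_1}(L^{p_1}[0,1])$ does not follow from anything proved in this review, and both you and the paper must import it from \cite{ChianJanLidingI}. Your sketch of its mechanism --- interpolating the outer norm between $\ell^{q_0}$ and $\ell^\infty$, and feeding in the uniform-in-$N$ estimate $b_m\big(\ell^\infty_N\to L^{p_1}(\mu_N)\big)\lesssim m^{-\min(1/2,1/p_1)}$ obtained from the trace/dimension argument --- is indeed how that proof goes, but as written it is an outline rather than a proof: the step producing, inside an arbitrary $m$-dimensional subspace of $\ell^{q_0}(L^\infty(\mu_{nj}))$, a normalised vector with small $\ell^\infty\big(L^{p_1}(\mu_{nj})\big)$-norm still has to be carried out, and that is precisely where the two sources of decay must be combined.
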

	
	\subsection{Embeddings between Besov Spaces}
	
	The results for sequence spaces, together with additional arguments, yield the following main theorems.
	
	\begin{thm}[{\cite[Theorem 1.1]{ChianJanLidingI}}] \label{Thm::ClassifyDom}
		Let $p_0,p_1,q_0,q_1\in(0,\infty]$ and $s_0,s_1\in\R$. Let $\Omega\subset\R^n$ be a bounded Lipschitz domain. Consider the embedding $\Bs_{p_0q_0}^{s_0}(\Omega)\hookrightarrow \Bs_{p_1q_1}^{s_1}(\Omega)$. Then:
		\begin{enumerate}[(i)]
			\item \label{Item::ClassifyDom::Not} There is no embedding (i.e. $\Bs_{p_0q_0}^{s_0}(\Omega)\not\subset \Bs_{p_1q_1}^{s_1}(\Omega)$) if and only if either
			\begin{itemize}
				\item $s_0-s_1 < \max\Big(0, \frac{n}{p_0}-\frac{n}{p_1}\Big)$, or
				\item $s_0-s_1 = \max\Big(0, \frac{n}{p_0}-\frac{n}{p_1}\Big)$ and $q_0 > q_1$.
			\end{itemize}
			\item \label{Item::ClassifyDom::Cpt} The embedding is compact if and only if 
			\[
			s_0-s_1 > \max\Big(0, \frac{n}{p_0}-\frac{n}{p_1}\Big).
			\]
			\item \label{Item::ClassifyDom::FSS} The embedding is non-compact but finitely strictly singular if and only if either
			\begin{itemize}
				\item $s_0-s_1 = \frac{n}{p_0}-\frac{n}{p_1} > 0$ and $q_0 < q_1$, or
				\item $s_0-s_1 = 0$, $p_1 < p_0 = \infty$, and $q_0 < q_1$.
			\end{itemize}
			\item \label{Item::ClassifyDom::SS} The embedding is strictly singular but not finitely strictly singular if and only if either
			\begin{itemize}
				\item $s_0-s_1 = 0$, $p_1 = p_0 = \infty$, and $q_0 < q_1$, or
				\item $s_0-s_1 = 0$, $p_1\le p_0<\infty$, and $q_0 < q_1$.
			\end{itemize}
			\item \label{Item::ClassifyDom::NSS} The embedding is not strictly singular if and only if
			\[
			s_0-s_1 = \max\Big(0, \frac{n}{p_0}-\frac{n}{p_1}\Big) \quad \text{and} \quad q_0=q_1.
			\]
		\end{enumerate}
	\end{thm}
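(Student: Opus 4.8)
The plan is to transport everything through the wavelet isomorphism and then invoke the sequence-space results established above. First I would fix $\eps>0$ small enough that $p_0,p_1,q_0,q_1\in[\eps,\infty]$ and $s_0,s_1\in[-\eps^{-1},\eps^{-1}]$, and apply the wavelet isomorphism $\tilde\Lambda$ for the bounded Lipschitz domain $\Omega$ (recorded after Proposition~\ref{Prop::Wavelet}) to both $\Bs_{p_0q_0}^{s_0}(\Omega)$ and $\Bs_{p_1q_1}^{s_1}(\Omega)$. Since both isomorphisms are induced by the \emph{same} wavelet-coefficient map $f\mapsto\{(f,\psi_{jm}^g)\}$ (with a fixed identification of the relevant index sets with $\{1,\dots,2^{nj}\}$), the square relating the Besov embedding $\Bs_{p_0q_0}^{s_0}(\Omega)\hookrightarrow\Bs_{p_1q_1}^{s_1}(\Omega)$ to these two isomorphisms commutes, so that embedding is, up to pre- and post-composition with isomorphisms, the coordinate inclusion
\[
E:\ell^{q_0}\big(2^{j\alpha_0}\cdot\ell^{p_0}\{1,\dots,2^{nj}\}\big)_{j\ge A}\hookrightarrow\ell^{q_1}\big(2^{j\alpha_1}\cdot\ell^{p_1}\{1,\dots,2^{nj}\}\big)_{j\ge A},\qquad\alpha_i:=s_i-\tfrac{n}{p_i},
\]
where we conjugate away the common factor $2^{jn/2}$ and, since deleting the finitely many finite-dimensional blocks with $j<A$ is a finite-rank modification, replace the starting index $0$ by any convenient $A\ge1$. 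As boundedness, compactness, strict singularity and finite strict singularity are all invariant under pre/post-composition with isomorphisms, it suffices to decide these properties for $E$.

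Next I would introduce the scalar $\theta:=(s_0-s_1)-\max(0,\tfrac{n}{p_0}-\tfrac{n}{p_1})=\alpha_0-\alpha_1-n\max(\tfrac{1}{p_1}-\tfrac{1}{p_0},0)$, which equals $\alpha_0-\alpha_1$ when $p_0\le p_1$ and $s_0-s_1$ when $p_0>p_1$. Since the block inclusion $\ell^{p_0}\{1,\dots,2^{nj}\}\hookrightarrow\ell^{p_1}\{1,\dots,2^{nj}\}$ has norm $2^{nj\max(1/p_1-1/p_0,0)}$, the $j$-th block contributes to $\|Ex\|$ at most $2^{-j\theta}$ times what it contributes to $\|x\|$. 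A Hölder estimate inside each block combined with $\ell^{q_0}\hookrightarrow\ell^{q_1}$ then shows $E$ is bounded exactly when $\theta\ge0$, and in the borderline case $\theta=0$ additionally when $q_0\le q_1$; failures are witnessed by a single flat vector $(1,\dots,1)$ in one block (if $\theta<0$) and by a test sequence carrying one such vector in each of $N$ consecutive blocks, $N\to\infty$ (if $\theta=0$, $q_0>q_1$). Translating the conditions on $\theta$ back into $s_i,p_i,q_i$ gives assertion~\ref{Item::ClassifyDom::Not}. When $\theta>0$, every $x$ supported on blocks $>M$ satisfies $\|Ex\|\lesssim2^{-M\theta}\|x\|$, so $E$ is the operator-norm limit of its finite-rank truncations and hence compact; when $\theta=0$ the images under $E$ of the block unit vectors stay a fixed distance apart, so $E$ is non-compact. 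As $\theta>0$ is precisely $s_0-s_1>\max(0,\tfrac{n}{p_0}-\tfrac{n}{p_1})$, this is assertion~\ref{Item::ClassifyDom::Cpt}.

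It remains to treat the non-compact range $\theta=0$, where boundedness forces $q_0\le q_1$ and where $\theta=0$ occurs in exactly one of two ways: (I) $p_0\le p_1$ with $\alpha_0=\alpha_1$ (equivalently $s_0-s_1=\tfrac{n}{p_0}-\tfrac{n}{p_1}\ge0$), or (II) $p_0>p_1$ with $s_0=s_1$. If $q_0=q_1$, I would exhibit the closed infinite-dimensional subspace $Z$ spanned by one representative per block — a single coordinate vector $e_1^{(j)}$ in case (I), the flat vector $(1,\dots,1)\in\ell^{p_0}\{1,\dots,2^{nj}\}$ in case (II) — on which $E$ restricts to an isometry (in case (I) the two block norms and the weights already agree; in case (II) one checks $2^{j\alpha_0}2^{nj/p_0}=2^{js_0}=2^{js_1}=2^{j\alpha_1}2^{nj/p_1}$), so $E$ is not strictly singular; with assertions~\ref{Item::ClassifyDom::Not} and~\ref{Item::ClassifyDom::Cpt} this yields~\ref{Item::ClassifyDom::NSS}. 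If instead $q_0<q_1$, there are three sub-cases. In case (I) with $p_0<p_1$ (so $s_0-s_1=\tfrac{n}{p_0}-\tfrac{n}{p_1}>0$), conjugating away the common weight presents $E$ as the restriction, to the subspace of sequences with $j$-th block in $\ell^{p_0}\{1,\dots,2^{nj}\}$, of $\ell^{q_0}(\ell^{p_0})\hookrightarrow\ell^{q_1}(\ell^{p_1})$, which is finitely strictly singular by Corollary~\ref{Cor::SeqEmbed1} (or Proposition~\ref{Prop::SeqEmbed1}); since Bernstein numbers do not increase under restriction to a subspace, $E$ is finitely strictly singular. In case (I) with $p_0=p_1$, and in case (II), $E$ is exactly the embedding of Corollary~\ref{Cor::SeqEmbed2} with common smoothness parameter $s_0$, which that corollary classifies as finitely strictly singular when $p_1<p_0=\infty$ and as strictly singular but not finitely strictly singular when either $p_1\le p_0<\infty$ or $p_0=p_1=\infty$. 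Matching the two sub-cases of (I) and the two sub-cases of (II) with the lists in~\ref{Item::ClassifyDom::FSS} and~\ref{Item::ClassifyDom::SS} completes the classification, the five alternatives being mutually exclusive and exhaustive once split along the sign of $\theta$ and the comparison of $q_0$ with $q_1$.

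The genuine analytic content — that the embeddings $\ell^{q_0}(\ell^{p_0})\hookrightarrow\ell^{q_1}(\ell^{p_1})$ and $\ell^{q_0}(2^{j(s-n/p_0)}\ell^{p_0}\{1,\dots,2^{nj}\})\hookrightarrow\ell^{q_1}(2^{j(s-n/p_1)}\ell^{p_1}\{1,\dots,2^{nj}\})$ are, or fail to be, finitely strictly singular in the borderline cases — is already isolated in Propositions~\ref{Prop::SeqEmbed1}, \ref{Prop::SeqEmbed2NotFSS} and Corollaries~\ref{Cor::SeqEmbed1}, \ref{Cor::SeqEmbed2}, so what is left is essentially bookkeeping. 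I expect the main obstacle to be making the first step airtight: verifying that the two wavelet isomorphisms $\tilde\Lambda$ are genuinely induced by one and the same coefficient map, so that the Besov embedding transports to the naked coordinate inclusion $E$ rather than merely to some operator conjugate to it. A secondary subtlety is checking that the bifurcation $\theta=0\Leftrightarrow$ ``$p_0\le p_1$ and $\alpha_0=\alpha_1$'' or ``$p_0>p_1$ and $s_0=s_1$'' matches the case splits in parts~\ref{Item::ClassifyDom::FSS}--\ref{Item::ClassifyDom::NSS} on the nose, and that the test-sequence constructions for part~\ref{Item::ClassifyDom::Not} survive the quasi-Banach range $p,q<1$ and the endpoint $p=\infty$.
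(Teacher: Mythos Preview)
Your proposal is correct and follows exactly the route the paper indicates: the paper does not give a detailed proof of Theorem~\ref{Thm::ClassifyDom} but states that it is obtained from the sequence-space results (Propositions~\ref{Prop::SeqEmbed1}, \ref{Prop::SeqEmbed2NotFSS} and Corollaries~\ref{Cor::SeqEmbed1}, \ref{Cor::SeqEmbed2}) via the wavelet isomorphism $\tilde\Lambda$ on bounded Lipschitz domains, with full details deferred to \cite{ChianJanLidingI}. Your write-up supplies precisely the bookkeeping the paper omits --- the commutativity of the $\tilde\Lambda$-square, the parameter $\theta$ governing boundedness and compactness, and the case split (I)/(II) matched to items \ref{Item::ClassifyDom::FSS}--\ref{Item::ClassifyDom::NSS} --- and you have correctly flagged the one genuine point to verify, namely that the same coefficient map realizes both isomorphisms so that the Besov embedding transports to the bare coordinate inclusion $E$.
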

	
	\begin{rmk*}
		By using a standard partition-of-unity argument, the results of Theorem~\ref{Thm::ClassifyDom} also hold when $\Omega$ is replaced by a compact smooth manifold.
	\end{rmk*}
	
	For embeddings between Besov spaces on $\R^n$, we have:
	
	\begin{thm}[{\cite[Theorem 1.2]{ChianJanLidingI}}]  \label{Thm::ClassifyRn}
		Let $p_0,p_1,q_0,q_1\in(0,\infty]$ and $s_0,s_1\in\R$. Consider the embedding $\Bs_{p_0q_0}^{s_0}(\R^n)\hookrightarrow \Bs_{p_1q_1}^{s_1}(\R^n)$. Then:
		\begin{enumerate}[(i)]
			\item \label{Item::ClassifyRn::Not} There is no embedding if and only if one of the following holds:
			\begin{itemize}
				\item $p_0 > p_1$,
				\item $s_0-s_1 < \frac{n}{p_0}-\frac{n}{p_1}$,
				\item $s_0-s_1 = \frac{n}{p_0}-\frac{n}{p_1} \ge 0$ and $q_0 > q_1$.
			\end{itemize}
			\item \label{Item::ClassifyRn::FSS} The embedding is non-compact but finitely strictly singular if and only if either
			\begin{itemize}
				\item $s_0-s_1 = \frac{n}{p_0}-\frac{n}{p_1} > 0$ and $q_0 < q_1$, or
				\item $s_0-s_1 > \frac{n}{p_0}-\frac{n}{p_1} > 0$.
			\end{itemize}
			\item \label{Item::ClassifyRn::NSS} The embedding is not strictly singular if and only if one of the following holds:
			\begin{itemize}
				\item $s_0-s_1 > 0$ and $p_0=p_1$,
				\item $s_0-s_1 = 0$, $p_0=p_1$, and $q_0\le q_1$, or
				\item $s_0-s_1 = \frac{n}{p_0}-\frac{n}{p_1} > 0$ and $q_0 = q_1$.
			\end{itemize}
		\end{enumerate}
	\end{thm}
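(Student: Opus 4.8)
The plan is to transport the whole question to vector–valued sequence spaces via the wavelet characterization. Fix $\eps>0$ small enough that $p_0,p_1,q_0,q_1\in[\eps,\infty]$ and $s_0,s_1\in[-\eps^{-1},\eps^{-1}]$, and let $\{\psi_{jm}^g\}$ and $\Lambda$ be as in Proposition~\ref{Prop::Wavelet}. Then one and the same $\Lambda$ is an isomorphism
\[
\Lambda\colon \Bs_{p_iq_i}^{s_i}(\R^n)\xrightarrow{\ \simeq\ } E_i:=\ell^{q_i}\bigl(2^{j\sigma_i}\cdot\ell^{p_i}(\Z)\bigr)_{j=0}^\infty,\qquad \sigma_i:=s_i-\tfrac n{p_i}+\tfrac n2,\quad i=0,1,
\]
and, since $\Lambda$ sends $f$ to its wavelet coefficients, it intertwines the inclusion $\Bs_{p_0q_0}^{s_0}(\R^n)\hookrightarrow\Bs_{p_1q_1}^{s_1}(\R^n)$ with $E_0\hookrightarrow E_1$. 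As boundedness, compactness, finite strict singularity and strict singularity are all stable under composition with isomorphisms, it suffices to classify $E_0\hookrightarrow E_1$; note $\sigma_0-\sigma_1=(s_0-s_1)-(\tfrac n{p_0}-\tfrac n{p_1})$. Logically I would prove only the three ``if'' implications and then check that the condition sets in (i), (ii), (iii) jointly exhaust all parameter tuples; the converse implications are then automatic (e.g.\ an embedding that is not strictly singular cannot satisfy (i) or (ii), since finite strict singularity implies strict singularity, hence it must satisfy (iii)). Part (i) is the classical Besov embedding theorem, which can also be read off $E_0\hookrightarrow E_1$: the inner factor forces $p_0\le p_1$; granting that, a geometric–series estimate in the weight $2^{-j(\sigma_0-\sigma_1)}$ gives boundedness whenever $\sigma_0>\sigma_1$ (for all $q_0,q_1$), while for $\sigma_0=\sigma_1$ rescaling reduces to $\ell^{q_0}(\ell^{p_0})\hookrightarrow\ell^{q_1}(\ell^{p_1})$, which is bounded iff $q_0\le q_1$; negating gives condition (i).

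For part (iii) the decisive structural feature of the $\R^n$ case is that the inner spaces $\ell^{p_i}(\Z)$ are infinite–dimensional, so $E_0\hookrightarrow E_1$ contains ready–made infinite–dimensional subspaces on which it is an isometry. If $p_0=p_1$, take the zeroth block $Z:=\{x_0\otimes e_0:x_0\in\ell^{p_0}(\Z)\}$; the weights at $j=0$ are $1$ on both sides, so the restriction is the identity of $\ell^{p_0}(\Z)$, which settles the first two bullets. If $\sigma_0=\sigma_1$, $p_0<p_1$ and $q_0=q_1=q$, rescale by $x\mapsto(2^{-j\sigma_0}x_j)_j$ to reduce to $\ell^{q}(\ell^{p_0})\hookrightarrow\ell^{q}(\ell^{p_1})$ and take the ``diagonal'' $Z:=\spa\{e_j\otimes e_j:j\ge0\}$ (the $j$-th inner basis vector placed in the $j$-th block); on $Z$ both norms equal $\|(a_j)_j\|_{\ell^q}$, so again the restriction is an isomorphism. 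In each case $T|_Z$ is an isomorphism onto its range, so $T$ is not strictly singular.

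For part (ii) both bullets force $p_0<p_1$. If $s_0-s_1=\tfrac n{p_0}-\tfrac n{p_1}>0$ then $\sigma_0=\sigma_1$, and rescaling by $2^{-j\sigma_0}$ turns $E_0\hookrightarrow E_1$ into $\ell^{q_0}(\ell^{p_0})\hookrightarrow\ell^{q_1}(\ell^{p_1})$ with $q_0<q_1$, which is finitely strictly singular by Proposition~\ref{Prop::SeqEmbed1}. If $s_0-s_1>\tfrac n{p_0}-\tfrac n{p_1}>0$ then $\sigma_0>\sigma_1$, and rescaling by $2^{-j\sigma_1}$ gives $\ell^{q_0}\bigl(2^{j(\sigma_0-\sigma_1)}\cdot\ell^{p_0}\bigr)_{j=0}^\infty\hookrightarrow\ell^{q_1}(\ell^{p_1})_{j=0}^\infty$ with $p_0<p_1$ and $q_0,q_1$ arbitrary, which is finitely strictly singular by the ``$s>t$'' case of Corollary~\ref{Cor::SeqEmbed1} (with $s=\sigma_0-\sigma_1>0=t$). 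Non-compactness is automatic in both cases: the zeroth block restriction is the non-compact embedding $\ell^{p_0}(\Z)\hookrightarrow\ell^{p_1}(\Z)$, and an operator with a non-compact restriction to a closed subspace is not compact.

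Once the sequence–space estimates above (Proposition~\ref{Prop::SeqEmbed1} and Corollary~\ref{Cor::SeqEmbed1}) are in hand there is no deep obstruction; the real work is organizational — matching each parameter regime to the right one of those results (in particular noticing that the regime $\sigma_0>\sigma_1$ with arbitrary $q_0,q_1$ needs the ``$s>t$'' branch rather than the ``$q_0<q_1$'' branch), and verifying exhaustiveness and mutual exclusivity of (i)--(iii). The one genuinely conceptual point — and the reason the $\R^n$ statement has neither a ``compact'' case nor a ``strictly singular but not finitely strictly singular'' case — is that the infinite–dimensional inner factors behave dichotomously: when $p_0<p_1$ they make the map non-compact while the extra outer/inner decay present exactly in the cases of (ii) lets Proposition~\ref{Prop::SeqEmbed1} and Corollary~\ref{Cor::SeqEmbed1} upgrade this to finite strict singularity, whereas when $p_0=p_1$ they embed a full isometric copy of $\ell^{p_0}(\Z)$ into the domain, so strict singularity fails outright.
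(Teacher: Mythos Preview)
Your proposal is correct and follows precisely the route the paper indicates: reduce via the wavelet isomorphism $\Lambda$ of Proposition~\ref{Prop::Wavelet} to the vector-valued sequence embedding $\ell^{q_0}(2^{j\sigma_0}\cdot\ell^{p_0}(\Z))\hookrightarrow\ell^{q_1}(2^{j\sigma_1}\cdot\ell^{p_1}(\Z))$, then invoke Proposition~\ref{Prop::SeqEmbed1} and Corollary~\ref{Cor::SeqEmbed1} for the finitely strictly singular regimes and exhibit explicit infinite-dimensional isometric subspaces (the zeroth block when $p_0=p_1$, the diagonal $\spa\{e_j\otimes e_j\}$ when $\sigma_0=\sigma_1$ and $q_0=q_1$) for the non-strictly-singular regimes. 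The paper itself does not write out a proof here---it merely states that the sequence-space results ``together with additional arguments'' yield the theorem and defers to the original article---so your argument is a faithful and complete fleshing-out of the sketch, including the exhaustiveness check and the observation that the infinite inner factor $\ell^{p_i}(\Z)$ is what eliminates both the compact and the ``strictly singular but not finitely strictly singular'' cases present in the bounded-domain Theorem~\ref{Thm::ClassifyDom}.
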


\subsection{Sobolev Embeddings}

\begin{defn}[Non-compact Sobolev Embedding]
Let $m\in\Z_+$, $1\le p<\frac{d}{m}$, and set 
$
p^*=\frac{dp}{d-mp}.$
Let $\Omega\subseteq\R^n$ be an open subset (not necessarily bounded). Suppose that $0<q<r\le\infty$. In the case $p=1$, we additionally assume that $q=1$. Then the following non-compact Sobolev embedding holds:
\begin{equation}
E:W_0^m L^{p,q}(\Omega)\hookrightarrow L^{p^*,r}(\Omega).
\end{equation}
\end{defn}

From \cite{LMOP}, we know that $E$ is maximally non-compact. Moreover, from \cite{BouGro} and \cite{LangMihula} one obtains that:
\begin{itemize}
    \item $E$ is not strictly singular when $r=p$, and
    \item $E$ is finitely strictly singular when $r=p^*$.
\end{itemize}

A natural question that arises is what happens in the intermediate case when $p<r<p^*$. Unfortunately, the techniques used in \cite{BouGro} and \cite{LangMihula} do not apply in this range, so a new approach is required.

By using results on Besov embeddings, we obtain the following theorem, which answers the above question.

\begin{thm}[{\cite[Theorem 1]{ChianJanLidingII}}]\label{Thm::MainThm}
The non-compact Sobolev embedding
\[
E:W_0^m L^{p,q}(\Omega)\hookrightarrow L^{p^*,r}(\Omega)
\]
is finitely strictly singular when $p<r$. Moreover, the Bernstein numbers of this embedding satisfy the decay estimate: for every $\eps>0$, there exists $C_\eps>0$ such that
\begin{equation}\label{Eqn::MainBnEst}
b_n\Big(W_0^m L^{p,q}(\Omega)\hookrightarrow L^{p^*,r}(\Omega)\Big)
\le C_\eps\, n^{-\min\Big(\frac{1}{p+\eps},\frac{1}{q}\Big)
\Big(1-\max\Big(\frac{q}{r},\frac{p}{p^*}+\eps\Big)\Big)}.
\end{equation}
\end{thm}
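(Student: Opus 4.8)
\section*{Proof proposal for Theorem~\ref{Thm::MainThm}}

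The plan is to reduce the Sobolev embedding $E$ to an embedding between Besov sequence spaces via the wavelet isomorphism of Proposition~\ref{Prop::Wavelet}, and then invoke the Bernstein-number estimate of Proposition~\ref{Prop::SeqEmbed1} (in the form encoded in Corollary~\ref{Cor::SeqEmbed2}). First I would record the two-sided Besov embeddings that trap the Sobolev-Lorentz spaces. On the domain side, since $W_0^m L^{p,q}(\Omega)$ carries $m$ derivatives in the Lorentz space $L^{p,q}$, one has a continuous inclusion $W_0^m L^{p,q}(\Omega)\hookrightarrow \Bs_{p+\eps,\,q}^{m-\eps}(\Omega)$ for every small $\eps>0$ (losing an arbitrarily small amount of smoothness and integrability to absorb the Lorentz secondary index into a plain $L^{p+\eps}$ scale). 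On the target side, the optimal Lorentz target $L^{p^*,r}(\Omega)$ satisfies $\Bs_{p^*,r}^{0}(\Omega)\hookrightarrow L^{p^*,r}(\Omega)$ when the parameters are in the admissible range, or more precisely one uses $L^{p^*,r}(\Omega)\hookleftarrow \Bs_{p^*-\delta,\,r}^{\delta'}(\Omega)$ chosen so that the Sobolev-type relation $(m-\eps)-\delta'=\frac{n}{p+\eps}-\frac{n}{p^*-\delta}$ holds on the nose. The point is that, for each fixed $\eps>0$, the composite
\[
W_0^m L^{p,q}(\Omega)\hookrightarrow \Bs_{p_0 q_0}^{s_0}(\Omega)\hookrightarrow \Bs_{p_1 q_1}^{s_1}(\Omega)\hookrightarrow L^{p^*,r}(\Omega)
\]
factors $E$ through a Besov embedding lying exactly on the ``finitely strictly singular'' boundary $s_0-s_1=\frac{n}{p_0}-\frac{n}{p_1}>0$ with $q_0<q_1$, which is case~\eqref{Item::ClassifyDom::FSS} of Theorem~\ref{Thm::ClassifyDom}.

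Next I would transport this middle Besov embedding to sequence spaces. By the isomorphism $\tilde\Lambda$ for bounded Lipschitz domains (the Remark after Proposition~\ref{Prop::Wavelet}), the embedding $\Bs_{p_0 q_0}^{s_0}(\Omega)\hookrightarrow\Bs_{p_1 q_1}^{s_1}(\Omega)$ is isomorphically equivalent to an embedding of the form \eqref{Eqn::SeqEmbed2::Eqn} between the spaces $\ell^{q_0}(2^{j(s-n/p_0)}\cdot\ell^{p_0}\{1,\dots,2^{nj}\})_{j=A}^\infty$ and its $p_1,q_1$ analogue. Since Bernstein numbers are injective $s$-numbers, they are multiplicative under composition with isomorphisms up to the norms of the isomorphisms and their inverses, so $b_n(E)\lesssim b_n$ of the sequence-space embedding, uniformly in $n$ (the constant depending on $\eps$ through the equivalence constants of $\tilde\Lambda$). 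For the sequence-space embedding, after the harmless dilation $j\mapsto$ multiplication by $2^{-j(\cdot)}$ one is reduced to the setting of Proposition~\ref{Prop::SeqEmbed1} with $p_0<p_1$ (here $p_0=p+\eps<p^*$ after the choices above) and $q_0<q_1$; that proposition gives
\[
b_n\Big(\ell^{q_0}(\ell^{p_0})\hookrightarrow\ell^{q_1}(\ell^{p_1})\Big)\le n^{-\min(1/p_0,1/q_0)\big(1-\max(q_0/q_1,\,p_0/p_1)\big)}.
\]
Substituting $p_0=p+\eps$, $q_0=q$, $p_1/p_0\to p^*/(p+\eps)$ and $q_1=r$ yields exactly the exponent $-\min\big(\tfrac{1}{p+\eps},\tfrac1q\big)\big(1-\max\big(\tfrac qr,\tfrac{p}{p^*}+\eps'\big)\big)$ after relabelling $\eps$, which is \eqref{Eqn::MainBnEst}; finite strict singularity follows since the exponent is strictly negative whenever $p<r$ and $q<r$ (and in the degenerate bookkeeping $q\ge r$ does not occur because we assumed $q<r$).

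Two points need care, and the first is the real obstacle. (1) The reduction from $\Omega$ a general open set to a bounded Lipschitz domain (where the wavelet isomorphism $\tilde\Lambda$ is available) is not automatic; Bernstein numbers do not behave well under restriction of the domain in general. Here one should use that $W_0^m L^{p,q}$ is the closure of $C_c^\infty(\Omega)$, so any finite-dimensional subspace $X_n\subset W_0^m L^{p,q}(\Omega)$ witnessing $b_n(E)$ can be approximated (in norm, hence without changing $b_n$ by more than $\eps$) by one spanned by functions with compact support in $\Omega$; covering that common compact support by finitely many balls and using locality of the $L^{p^*,r}$ norm lets one transfer the estimate to a fixed bounded Lipschitz domain (e.g.\ a large ball or cube), where the Besov machinery applies. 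Making this uniform in $n$ — so that the sup over $X_n$ in the definition of $b_n$ is controlled — is the technical heart of the argument. (2) The Besov sandwiching must be arranged so the middle embedding is \emph{exactly} on the critical line $s_0-s_1=\tfrac{n}{p_0}-\tfrac{n}{p_1}$; if it strictly exceeds the line the embedding is compact and one gets a (better, but differently-shaped) bound, so one tunes $\delta,\delta'$ against $\eps$ to stay on the boundary and thereby capture the stated exponent. Both issues are handled by the now-standard techniques of \cite{ChianJanLidingII} and \cite{LangMihula}; once they are in place, the exponent in \eqref{Eqn::MainBnEst} is just the arithmetic of substituting the parameters into Proposition~\ref{Prop::SeqEmbed1}.
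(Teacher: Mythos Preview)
Your overall strategy is correct and matches the paper's: sandwich the Sobolev embedding between two Besov embeddings lying on the critical line $s_0-s_1=\tfrac{n}{p_0}-\tfrac{n}{p_1}$, transfer to sequence spaces via the wavelet isomorphism, and invoke Proposition~\ref{Prop::SeqEmbed1}. The Besov sandwich you sketch is exactly Lemma~\ref{Lem::SobEmbed} (parts \eqref{Item::SobEmbed::1}/\eqref{Item::SobEmbed::2} on the domain side and \eqref{Item::SobEmbed::3} on the target side), and the arithmetic at the end is the same.

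The one place you diverge from the paper is your point~(1), which you flag as ``the real obstacle'' --- and it is a self-imposed one. You route the argument through a bounded Lipschitz domain and the isomorphism $\tilde\Lambda$, which forces you to worry about reducing a general open $\Omega$ to a bounded Lipschitz set uniformly in $n$. The paper instead reduces to $\R^d$ in one line: since $W_0^mL^{p,q}(\Omega)$ is (via extension by zero) an isometric closed subspace of $W^mL^{p,q}(\R^d)$, and since the $L^{p^*,r}(\Omega)$ and $L^{p^*,r}(\R^d)$ norms agree on functions supported in $\Omega$, every $n$-dimensional test subspace for $b_n(E)$ is already a test subspace for the $\R^d$ embedding, so
\[
b_n\big(W_0^mL^{p,q}(\Omega)\hookrightarrow L^{p^*,r}(\Omega)\big)\le b_n\big(W^mL^{p,q}(\R^d)\hookrightarrow L^{p^*,r}(\R^d)\big).
\]
One then uses the wavelet isomorphism $\Lambda$ on $\R^d$ (Proposition~\ref{Prop::Wavelet}) directly; because the two Besov spaces sit on the critical line, the weights $2^{j(s-n/p+n/2)}$ coincide, and a single diagonal rescaling turns the middle embedding into $\ell^q(\ell^{p+\delta})\hookrightarrow\ell^r(\ell^{p^*-\delta})$, to which Proposition~\ref{Prop::SeqEmbed1} applies verbatim. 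Your compact-support-and-cover argument would presumably work, but it is unnecessary once you notice that the zero-extension trick gives the reduction for free.
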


When $p<q<r<p^*$, we obtain the sharp asymptotic estimate (see Proposition~\ref{Prop::SobSharp}):
\[
C^{-1} n^{-\Big(\frac{1}{q}-\frac{1}{r}\Big)}
\le b_n\Big(W_0^m L^{p,q}(\Omega)\hookrightarrow L^{p^*,r}(\Omega)\Big)
\le C\, n^{-\Big(\frac{1}{q}-\frac{1}{r}\Big)}.
\]

The next lemma is essential for the proof of the above theorem.

\begin{lem}[{\cite[Lemma 2]{ChianJanLidingII}}]\label{Lem::SobEmbed}
Let $m\in\Z_+$, $1\le p< \infty$, and $0<q\le\infty$. Then:
\begin{enumerate}[(i)]
    \item\label{Item::SobEmbed::1} When $1<p<\infty$, one has
    \[
    W^m L^{p,q}(\R^d)\hookrightarrow\Bs_{p^*,q}^s(\R^d)
    \]
    for any $s<m$ and for every $p^*\in(p,\infty]$ satisfying 
    \[
    \frac{1}{p^*}=\frac{1}{p}-\frac{m-s}{d}.
    \]
    \item\label{Item::SobEmbed::2} When $p=1$, 
    \[
    W^{m,1}(\R^d) \hookrightarrow \Bs_{p^*,1}^s(\R^d)
    \]
    for $m-d\le s<m$ and $\frac{1}{p^*}=1-\frac{m-s}{d}$.
    \item\label{Item::SobEmbed::3} When $1<p<\infty$, 
    \[
    \Bs_{\tilde p,q}^s(\R^d)\hookrightarrow L^{p,q}(\R^d)
    \]
    for any $s>0$ and with 
    \[
    \frac{1}{\tilde p}=\frac{1}{p}+\frac{s}{d}.
    \]
\end{enumerate}
\end{lem}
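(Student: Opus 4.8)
All three embeddings live on the common ``differential dimension line'' ($m-\tfrac dp=s-\tfrac d{p^*}$ in (i), $0-\tfrac dp=s-\tfrac d{\tilde p}$ in (iii)), so none of them can improve the differential dimension; the entire content is that the \emph{fine index} of the target Besov space may be taken to be exactly $q$, and this is possible precisely because the smoothness inequalities in \ref{Item::SobEmbed::1} and \ref{Item::SobEmbed::3} are \emph{strict}. I would prove \ref{Item::SobEmbed::3} directly from the wavelet decomposition of Proposition~\ref{Prop::Wavelet}, deduce \ref{Item::SobEmbed::1} from it by lifting and duality, and obtain \ref{Item::SobEmbed::2} by a separate descent argument at $p=1$.

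\textbf{Part \ref{Item::SobEmbed::3}.} Given $f\in\Bs^s_{\tilde p,q}(\R^d)$ write $f=\sum_{j\ge0}f_j$ with $f_j=\sum_{m,g}(f,\psi^g_{jm})\psi^g_{jm}$ the blocks of its wavelet expansion; by Proposition~\ref{Prop::Wavelet}\ref{Item::Wavelet::Isom} and the identity $s-\tfrac d{\tilde p}+\tfrac d2=d(\tfrac12-\tfrac1p)$ one has $\|f\|_{\Bs^s_{\tilde p,q}}\approx\big(\sum_j(2^{jd(1/2-1/p)}\|a_j\|_{\ell^{\tilde p}})^q\big)^{1/q}$, where $a_j=((f,\psi^g_{jm}))_{m,g}$. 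Using the scaling and localization in Proposition~\ref{Prop::Wavelet}\ref{Item::Wavelet::Scal}--\ref{Item::Wavelet::Supp} (bounded overlap at each level $j$, $\|\psi^g_{jm}\|_{L^\infty}\approx2^{jd/2}$, $\|\psi^g_{jm}\|_{L^{\tilde p}}\approx2^{jd(1/2-1/\tilde p)}$) I would bound each block in two spaces, $\|f_j\|_{L^\infty}\lesssim2^{jd/2}\|a_j\|_{\ell^{\tilde p}}$ and $\|f_j\|_{L^{\tilde p}}\lesssim2^{jd(1/2-1/\tilde p)}\|a_j\|_{\ell^{\tilde p}}$. Since $s>0$ forces $\tilde p<p$, the real interpolation identity $L^{p,q}=(L^{\tilde p},L^\infty)_{\theta,q}$, $\theta=1-\tilde p/p$, together with $\|f_j\|_{(L^{\tilde p},L^\infty)_{\theta,q}}\lesssim\|f_j\|_{L^{\tilde p}}^{1-\theta}\|f_j\|_{L^\infty}^{\theta}$, yields $\|f_j\|_{L^{p,q}}\lesssim2^{jd(1/2-1/p)}\|a_j\|_{\ell^{\tilde p}}$, hence $\sum_j\|f_j\|_{L^{p,q}}^q\lesssim\|f\|_{\Bs^s_{\tilde p,q}}^q$. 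It remains to recover $\|f\|_{L^{p,q}}$ from the blocks: for $q\le1$ the $q$-triangle inequality of $L^{p,q}$ finishes the argument at once, while for $q>1$ a bare triangle inequality is not enough (it would demand the false $\Bs^0_{p,q}\hookrightarrow L^{p,q}$) and one must invoke the Littlewood--Paley square-function characterization of $L^{p,q}$ for $1<p<\infty$ (from the Calder\'on--Zygmund theory by interpolation) to pass from $(\|f_j\|_{L^{p,q}})_j\in\ell^q$ to $f\in L^{p,q}$; alternatively one bypasses the wavelets here and quotes the sharp Franke--Jawerth embedding $F^s_{\tilde p,2}\hookrightarrow\Bs^0_{p,\tilde p}$ on the differential dimension line plus real interpolation in the $q$-parameter. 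The case $q=\infty$ reduces to $q<\infty$ after an arbitrarily small decrease of $s$ (with $\tilde p$ readjusted), using $\Bs^s_{\tilde p,\infty}\hookrightarrow\Bs^{s'}_{\tilde p,1}$ for $0<s'<s$.

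\textbf{Parts \ref{Item::SobEmbed::1} and \ref{Item::SobEmbed::2}.} For $1<p<\infty$ the Bessel potential $J_m=(1-\Delta)^{m/2}$ gives isomorphisms $W^mL^{p,q}(\R^d)\xrightarrow{\ \sim\ }L^{p,q}(\R^d)$ (the Lorentz analogue of $W^{m,p}=H^{m,p}$, valid since $1<p<\infty$, $0<q\le\infty$) and $\Bs^s_{p^*,q}(\R^d)\xrightarrow{\ \sim\ }\Bs^{s-m}_{p^*,q}(\R^d)$, so \ref{Item::SobEmbed::1} is equivalent to $L^{p,q}(\R^d)\hookrightarrow\Bs^{s-m}_{p^*,q}(\R^d)$ with $s-m<0$ and $\tfrac1{p^*}=\tfrac1p+\tfrac{s-m}d$. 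For $1\le q<\infty$ I would dualize: with $(L^{p,q})^{*}=L^{p',q'}$ and $(\Bs^{\sigma}_{a,b})^{*}=\Bs^{-\sigma}_{a',b'}$, this is in turn equivalent to $\Bs^{m-s}_{(p^*)',q'}(\R^d)\hookrightarrow L^{p',q'}(\R^d)$, which is exactly part~\ref{Item::SobEmbed::3} with $(p,q,\tilde p,s)$ replaced by $(p',q',(p^*)',m-s)$ (note $m-s>0$, $\tfrac1{(p^*)'}=\tfrac1{p'}+\tfrac{m-s}d$, and all dual exponents lie in $[1,\infty]$ because $1<p<\infty$, $p^*\in(p,\infty]$); the range $0<q<1$ and the endpoint $q=\infty$ I would cover by real interpolation between the conclusions at $q=1$ and $q=\infty$ and a minor extra step. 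For \ref{Item::SobEmbed::2} ($p=1$), where $W^{m,1}$ is not a Triebel--Lizorkin space and duality is unavailable, I would descend: the sharp Sobolev--Lorentz inequality $\|h\|_{L^{d/(d-1),1}(\R^d)}\lesssim\|\nabla h\|_{L^{1}(\R^d)}$ and its iterates, applied to the derivatives of order $\le m-k$, give $W^{m,1}(\R^d)\hookrightarrow W^{m-k}L^{d/(d-k),1}(\R^d)$ whenever $1\le k<d$, $k\le m$; since $d/(d-k)>1$, part~\ref{Item::SobEmbed::1} then yields the claimed embedding into $\Bs^{s}_{p^*,1}(\R^d)$ for $s<m-k$, the exponent relation $\tfrac1{p^*}=\tfrac{d-k}d-\tfrac{(m-k)-s}d=1-\tfrac{m-s}d$ checking out, while the residual high-smoothness range (in particular the endpoint $s=m-d$, $p^*=\infty$) is handled by a direct Littlewood--Paley estimate combining Bernstein's inequality on each dyadic block with the classical sharp embedding $W^{d,1}(\R^d)\hookrightarrow\Bs^0_{\infty,1}(\R^d)$.

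\textbf{Main obstacle.} The one genuinely delicate point throughout is the target fine index. Every naive route --- a plain Littlewood--Paley estimate for \ref{Item::SobEmbed::1}--\ref{Item::SobEmbed::2}, or summing wavelet blocks by the triangle inequality in \ref{Item::SobEmbed::3} --- produces a fine index that is too small and would force one of the false embeddings $L^{p,q}\hookrightarrow\Bs^0_{p,q}$, $W^{m,p}\hookrightarrow\Bs^{m-d/p+d/p^*}_{p^*,q}$ with $q<p$, or $L^{1}\hookrightarrow\Bs^0_{1,1}$. Hitting the fine index exactly is possible only because the smoothness gap $m-s$ (resp.\ $s$) is strictly positive, and it is realized either through the square-function/Calder\'on--Zygmund machinery, through sharp Franke--Jawerth-type control of the microscopic index, or --- as above --- by pushing everything onto part~\ref{Item::SobEmbed::3} and taking the real-interpolation parameter to be $q$; keeping this bookkeeping correct across the lifting, duality and descent steps is where the real work lies.
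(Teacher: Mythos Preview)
Your route is workable but considerably more circuitous than the paper's. The paper handles (i) and (iii) in one stroke by real interpolation: it fixes the Besov primary index ($p^*$ in (i), $\tilde p$ in (iii)), takes the diagonal embeddings $L^{dp^*/(d+rp^*)}\hookrightarrow\Bs^{-r}_{p^*,p^*}$ and $\Bs^{r}_{\tilde p,\tilde p}\hookrightarrow L^{d\tilde p/(d-r\tilde p)}$ at two values $r_0<r_\theta<r_1$, and applies $(L^{p_0},L^{p_1})_{\theta,q}=L^{p_\theta,q}$ together with $(\Bs^{s_0}_{p,q_0},\Bs^{s_1}_{p,q_1})_{\theta,q}=\Bs^{s_\theta}_{p,q}$. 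The free interpolation parameter $q$ lands directly in the fine index, uniformly for all $0<q\le\infty$, with no case analysis, no duality, and no Bessel lifting (the $m$ derivatives are accommodated simply because $\partial^\alpha f\in L^{p,q}$ for $|\alpha|\le m$). Your wavelet argument for (iii) hits exactly the obstacle you flag --- for $q>1$ one cannot recover $\|f\|_{L^{p,q}}$ from $(\|f_j\|_{L^{p,q}})\in\ell^q$ --- and your first proposed fix via the square-function characterisation of $L^{p,q}$ still leaves a gap (controlling $\|(\sum_j|f_j|^2)^{1/2}\|_{L^{p,q}}$ by an $\ell^q$-norm of the $\|f_j\|_{L^{p,q}}$ is not automatic); your second fix, Franke--Jawerth plus real interpolation in $q$, works but is precisely the paper's mechanism with extra scaffolding. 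For (ii) both you and the paper descend from $p=1$ to a reflexive Sobolev--Lorentz space and then invoke (i); you iterate the integer-order inequality $W^{1,1}\hookrightarrow W^0L^{d/(d-1),1}$, whereas the paper uses Spector's fractional bound $(-\Delta)^{-r/2}:W^{1,1}\to W^1L^{d/(d-r),1}$ (followed by H\"ormander--Mikhlin to pass to $(I-\Delta)^{-r/2}$), which covers the full range $m-d\le s<m$ without the separate endpoint patching you anticipate near $s=m-d$.
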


\begin{proof}
Parts (\ref{Item::SobEmbed::1}) and (\ref{Item::SobEmbed::3}) can be obtained by real interpolation using classical Sobolev embeddings. In particular, for $0<p,p_0,p_1 <\infty$, $s_0,s_1\in\R$, $0<q,q_0,q_1\le\infty$, and $0<\theta<1$ with $p_0\neq p_1$, if
\[
\frac{1-\theta}{p_0}+\frac{\theta}{p_1}=\frac{1}{p_\theta} \quad\text{and}\quad s_\theta=(1-\theta)s_0+\theta s_1,
\]
then
\begin{equation}
\label{Eqn::SobEmbed::RealInterpo}
(L^{p_0}(\R^d),L^{p_1}(\R^d))_{\theta,q}=L^{p_\theta,q}(\R^d)
\quad \text{and} \quad
(\Bs_{p,q_0}^{s_0}(\R^d),\Bs_{p,q_1}^{s_1}(\R^d))_{\theta,q}=\Bs_{p,q}^{s_\theta}(\R^d).
\end{equation}
See \cite[Theorem~4.3]{HolmstedtInterpolation} and \cite[Theorem~4.25]{SawanoBook}, respectively.

On the other hand, for $r>0$, we have the embeddings
\begin{gather}
\label{Eqn::SobEmbed::Lp1}
L^{\frac{dp^*}{d+rp^*}}(\R^d)\hookrightarrow\Bs_{p^*,p^*}^{-r}(\R^d),\qquad \frac{d}{d-r}<p^*\le\infty,
\\
\label{Eqn::SobEmbed::Lp2}
\Bs_{\tilde p,\tilde p}^{r}(\R^d)\hookrightarrow L^{\frac{d\tilde p}{d-r\tilde p}}(\R^d),\qquad \frac{d}{d+r}<\tilde p<\frac{d}{r}.
\end{gather}
See, e.g., \cite[Remark~2.7.1/3]{TriebelTheoryOfFunctionSpacesI} (note that in that reference, $L^p(\R^d)$ is identified with $\Fs_{p2}^0(\R^d)$ by \cite[Theorem~2.5.6]{TriebelTheoryOfFunctionSpacesI}).

Thus, by applying \eqref{Eqn::SobEmbed::RealInterpo} and \eqref{Eqn::SobEmbed::Lp1} with appropriate parameters (choosing $r_0<r_\theta=m-s<r_1$), we obtain part (\ref{Item::SobEmbed::1}).

Similarly, applying \eqref{Eqn::SobEmbed::RealInterpo} and \eqref{Eqn::SobEmbed::Lp2} with $r_0<r_\theta=s<r_1$, we obtain part (\ref{Item::SobEmbed::3}).

When $p=1$, by \cite[(1.2)]{SpectorL1} we have that 
\[
(-\Delta)^{-\frac{r}{2}}W^{1,1} (\R^d)\to W^1L^{\frac{d}{d-r},1}(\R^d)
\]
for $0<r<d$ (this refines the result in \cite{SchikorraSpectorVanSchaftingenL1}, which shows that $(-\Delta)^{-\frac{r}{2}}W^{1,1}\to W^{1,\frac{d}{d-r}}$). Using the H\"ormander-Mikhlin multiplier theorem (since $1<\frac{d}{d-r}<\infty$), one obtains that
\[
(I-\Delta)^{-\frac{r}{2}}(-\Delta)^{\frac{r}{2}}:L^{\frac{d}{d-r},1}\to L^{\frac{d}{d-r},1},
\]
i.e., $(I-\Delta)^{-\frac{r}{2}}:W^{1,1} (\R^d)\to W^1L^{\frac{d}{d-r},1}(\R^d)$. Then, applying part (\ref{Item::SobEmbed::1}), we deduce that 
\[
W^{1,1}(\R^d)\hookrightarrow\Bs_{\frac{d}{d-2r},1}^{\,1-r}(\R^d)
\]
for $0<r\le\frac{d}{2}$. Taking $r=\frac{m-s}{2}$ and using the relevant norm equivalences yields part (\ref{Item::SobEmbed::2}).
\end{proof}

\begin{proof}[Proof of Theorem~\ref{Thm::MainThm}]
Note that $W^m_0L^{p,q}(\Omega)$ is a closed subspace of $W^mL^{p,q}(\R^d)$ (with the same Sobolev norm) for any open set $\Omega\subseteq\R^d$. Therefore,
\[
b_n\Big(W^m_0L^{p,q}(\Omega)\hookrightarrow L^{p^*,r}(\Omega)\Big)
\le b_n\Big(W^m_0L^{p,q}(\R^d)\hookrightarrow L^{p^*,r}(\R^d)\Big),
\]
so it suffices to consider the case $\Omega=\R^d$.

Let $\delta>0$ be a small number. By Lemma~\ref{Lem::SobEmbed}, there exists a factorization
\[
W^mL^{p,q}(\R^d)\hookrightarrow\Bs_{p+\delta,q}^{\,m-\frac{d\delta}{p(p+\delta)}}(\R^d)
\overset{i}{\hookrightarrow}\Bs_{p^*-\delta,r}^{\,\frac{d\delta}{p^*(p^*-\delta)}}(\R^d)
\hookrightarrow L^{p^*,r}(\R^d),
\]
where $\overset{i}{\hookrightarrow}$ is an embedding between Besov spaces (see \cite{ChianJanLidingI}).
Using the wavelet decomposition (see Proposition \ref{Prop::Wavelet} and \cite[Proposition~2.13]{ChianJanLidingI}), there exists an isomorphism
\[
\Lambda:\Bs_{p^*-\delta,r}^{\,\frac{d\delta}{p^*(p^*-\delta)}}(\R^d)\xrightarrow{\simeq}\ell^r\Big(\ell^{p^*-\delta}\Big)
\]
such that the restriction
\[
\Lambda:\Bs_{p+\delta,q}^{\,m-\frac{d\delta}{p(p+\delta)}}(\R^d)\xrightarrow{\simeq}\ell^q\Big(\ell^{p+\delta}\Big)
\]
is also an isomorphism. By composing these isomorphisms, one deduces that

\begin{align*}
    b_n\Big(W^mL^{p,q}(\R^d)\hookrightarrow L^{p^*,r}(\R^d)\Big)
& \lesssim_\delta b_n\Big(\Bs_{p+\delta,q}^{\,m-\frac{d\delta}{p(p+\delta)}}(\R^d)
\hookrightarrow\Bs_{p^*-\delta,r}^{\,\frac{d\delta}{p^*(p^*-\delta)}}(\R^d)\Big) \\
& \lesssim_\delta b_n\Big(\ell^q\big(\ell^{p+\delta}\big)
\hookrightarrow\ell^r\big(\ell^{p^*-\delta}\big)\Big).
\end{align*}

Conversely, by Proposition \ref{Prop::SeqEmbed1} (see also  \cite[Proposition~3.5]{ChianJanLidingI}), we have
\[
b_n\Big(\ell^q\big(\ell^{p+\delta}\big)
\hookrightarrow\ell^r\big(\ell^{p^*-\delta}\big)\Big)
\le n^{-\min\Big(\frac{1}{p+\delta},\frac{1}{q}\Big)
\Big(1-\max\Big(\frac{q}{r},\frac{p+\delta}{p^*-\delta}\Big)\Big)}.
\]
Now, for $\eps>0$ given in the statement, choose $0<\delta<\eps$ so that
\[
\frac{p+\delta}{p^*-\delta}<\frac{p}{p^*}+\eps.
\]
This yields the desired estimate \eqref{Eqn::MainBnEst} and completes the proof.
\end{proof}

Note that when $p<q<r<p^*$, for $\delta>0$ sufficiently small, we have 
\[
\min\Big(\frac{1}{p+\delta},\frac{1}{q}\Big)\Big(1-\max\Big(\frac{q}{r},\frac{p+\delta}{p^*-\delta}\Big)\Big)
=\frac{1}{q}-\frac{1}{r}.
\]
This exponent is sharp, as shown by the following proposition.

\begin{prop}\label{Prop::SobSharp}
Let $m\ge1$, $1\le p<\frac{d}{m}$, and $0<q\le r\le\infty$ (with $q\le1$ if $p=1$). Define $p^*=\frac{dp}{d-mp}$. Then there exists a constant $c>0$ such that
\[
b_n\Big(W_0^mL^{p,q}(\Omega)\hookrightarrow L^{p^*,r}(\Omega)\Big)
\ge c\, n^{\frac{1}{r}-\frac{1}{q}},\qquad n\ge1.
\]
Moreover, when $q=r$, the embedding
\[
W_0^mL^{p,q}(\Omega)\hookrightarrow L^{p^*,r}(\Omega)
\]
is not strictly singular.
\end{prop}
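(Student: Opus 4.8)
The plan is to realise, inside $W_0^m L^{p,q}(\Omega)$, $n$-dimensional subspaces on which $E$ acts --- up to constants independent of $n$ --- like the identity embedding $\ell^q_n\hookrightarrow\ell^r_n$, whose $n$th Bernstein number is exactly $n^{1/r-1/q}$ (this being the elementary inequality $\|d\|_{\ell^r}\ge n^{1/r-1/q}\|d\|_{\ell^q}$ for $d\in\R^n$ when $q\le r$); this yields the lower bound, and the same building blocks used over infinitely many scales give the final assertion. The building blocks are rescaled wavelets living at geometrically separated dyadic scales. Fix a ball $B(x_0,\rho)\subset\Omega$, one of the wavelet generators $\phi:=\psi^g_{00}$ of Proposition~\ref{Prop::Wavelet} (with the parameter $\eps$ there fixed small enough at the outset), a large integer $j_1$, and for $j>j_1$ put
\[
\tilde\psi_j(x):=2^{jd/p^*}\phi\big(2^j(x-m_j)\big)=2^{-jw}\psi^g_{j,m_j}(x),\qquad w:=\tfrac d2-\tfrac d{p^*},
\]
where the centres $m_j\in B(x_0,\rho)$ are chosen so that the balls $B(m_j,2^{L-j})\supset\operatorname{supp}\tilde\psi_j$ are pairwise disjoint; this is possible because the radii $2^{L-j}$ have geometric sum $<2^{L-j_1}<\rho$ for $j_1$ large, so the shrinking balls --- however many we use --- can be lined up inside $B(x_0,\rho)$ with $j_1$ fixed. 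Because $\tfrac d{p^*}=\tfrac dp-m$, rescaling gives the exact identities $\|\tilde\psi_j\|_{L^{p^*,r}}=\|\phi\|_{L^{p^*,r}}$, $\|\partial^\alpha\tilde\psi_j\|_{L^{p,q}}=2^{j(|\alpha|-m)}\|\partial^\alpha\phi\|_{L^{p,q}}$ for $|\alpha|\le m$, and $(d_j\tilde\psi_j)^*(t)=|d_j|\,2^{jd/p^*}\phi^*(2^{jd}t)$. Any finite combination $g=\sum_j d_j\tilde\psi_j$ is supported in $\Omega$, hence lies in $W_0^mL^{p,q}(\Omega)$ with $\|g\|_{W_0^mL^{p,q}(\Omega)}=\|g\|_{W^mL^{p,q}(\R^d)}$ and $\|Eg\|_{L^{p^*,r}(\Omega)}=\|g\|_{L^{p^*,r}(\R^d)}$; the estimates below also make the infinite combinations converge, which is what the last part requires.

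For the lower bound on the target norm I use Lemma~\ref{SumNonIncrRear}: since $(d_j\tilde\psi_j)^*$ is, as a function of $t$, supported in $(0,2^{-jd}M)$ with $M:=|\{\phi\neq0\}|$, choosing the pairwise disjoint intervals $I_j:=(2^{-jd}a,2^{-jd}b)$ with $0<2^{-d}b\le a<b<M$, applying Lemma~\ref{SumNonIncrRear}, and substituting $u=2^{jd}t$ gives
\[
\|Eg\|_{L^{p^*,r}}^r\;\gtrsim\;\sum_j\int_{I_j}\big(t^{1/p^*}(d_j\tilde\psi_j)^*(t)\big)^r\frac{dt}{t}\;=\;\Big(\!\int_a^b\!\big(u^{1/p^*}\phi^*(u)\big)^r\frac{du}{u}\Big)\sum_j|d_j|^r,
\]
with the parenthesised constant in $(0,\infty)$ and independent of $n$ (supremum replacing the integral when $r=\infty$). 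Hence $\|Eg\|_{L^{p^*,r}}\gtrsim\|d\|_{\ell^r}$.

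It remains to bound $\|g\|_{W^mL^{p,q}}$ from above by $\|d\|_{\ell^q}$, and here the two regimes split. When $q\le p$ --- in particular whenever $p=1$, since then $q\le1$ --- the distribution functions of the disjointly supported summands of $\partial^\alpha g$ add, so subadditivity of $s\mapsto s^{q/p}$ gives $\|\partial^\alpha g\|_{L^{p,q}}\le\big(\sum_j|d_j|^q\|\partial^\alpha\tilde\psi_j\|_{L^{p,q}}^q\big)^{1/q}\le\|\partial^\alpha\phi\|_{L^{p,q}}\|d\|_{\ell^q}$ (using $2^{j(|\alpha|-m)}\le1$), and summing over $|\alpha|\le m$ settles this case. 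When $q>p$ (so necessarily $p>1$) this crude bound only yields the too-large $\|d\|_{\ell^p}$ and one must exploit the scale separation. Fix any $s>m$ and let $\tilde p\in(0,p)$ be defined by $\tfrac1{\tilde p}=\tfrac1p+\tfrac{s-m}{d}$, so that $s-\tfrac d{\tilde p}+\tfrac d2=m-\tfrac dp+\tfrac d2=w$. By the wavelet characterization \eqref{Eqn::Wavelet::bpqsSpace} --- one wavelet per level, orthonormality annihilating all other coefficients ---
\[
\|g\|_{\Bs^s_{\tilde p,q}(\R^d)}\;\asymp\;\Big(\sum_j 2^{jq(s-d/\tilde p+d/2)}\big|d_j\,2^{-jw}\big|^q\Big)^{1/q}\;=\;\|d\|_{\ell^q},
\]
and applying part~(\ref{Item::SobEmbed::3}) of Lemma~\ref{Lem::SobEmbed} to $\partial^\alpha g$ for each $|\alpha|\le m$, combined with the elementary embeddings $\Bs^{s-|\alpha|}_{\tilde p,q}\hookrightarrow\Bs^{s-m}_{\tilde p,q}$, gives $\Bs^s_{\tilde p,q}(\R^d)\hookrightarrow W^mL^{p,q}(\R^d)$, hence $\|g\|_{W^mL^{p,q}}\lesssim\|d\|_{\ell^q}$. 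In either regime, over the $n$-dimensional span of $\{\tilde\psi_j:j_1<j\le j_1+n\}$ one gets $\|Eg\|_{L^{p^*,r}}/\|g\|_{W^mL^{p,q}}\gtrsim\|d\|_{\ell^r}/\|d\|_{\ell^q}\ge n^{1/r-1/q}$, so $b_n(E)\ge c\,n^{1/r-1/q}$ with $c$ independent of $n$. For the last statement, let $q=r$ and run the construction over all levels $j>j_1$: on the infinite-dimensional closed span $Z$ of $\{\tilde\psi_j\}_{j>j_1}$ one has $\|Eg\|_{L^{p^*,q}}\gtrsim\|d\|_{\ell^q}\gtrsim\|g\|_{W^mL^{p,q}}$, so $E|_Z$ is bounded below, i.e.\ an isomorphism onto its image, and $E$ is not strictly singular.

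The genuine obstacle is the matching upper bound $\|g\|_{W^mL^{p,q}}\lesssim\|d\|_{\ell^q}$ in the regime $q>p$: disjointness of supports alone produces only an $\ell^p$–bound, so one truly needs the geometric separation of the scales $2^{-j}$, and the cleanest way to harvest it is through the wavelet description of a Besov space slightly finer than $W^mL^{p,q}$ together with the sharp Besov--Sobolev embedding of Lemma~\ref{Lem::SobEmbed}; the fact that the exponent $w=\tfrac d2-\tfrac d{p^*}$ produced on the domain side coincides with the one governing the target side is exactly the imprint of the criticality $p^*=\tfrac{dp}{d-mp}$. The remaining points --- packing the shrinking balls into $\Omega$ with constants uniform in $n$, the passage between norms on $\R^d$ and on $\Omega$ for compactly supported functions, and the routine modifications when $r=\infty$ or $q=\infty$ --- are straightforward.
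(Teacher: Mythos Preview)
Your argument is correct. The paper itself does not supply a proof of this proposition; it merely refers the reader to \cite{ChianJanLidingII}. Your construction---rescaled wavelets $\tilde\psi_j$ at geometrically separated levels, packed into a fixed ball in $\Omega$, so that on their span the embedding reduces to $\ell^q_n\hookrightarrow\ell^r_n$---is a clean self-contained route and uses only tools already present in the review (Lemma~\ref{SumNonIncrRear} for the $L^{p^*,r}$ lower bound, and Proposition~\ref{Prop::Wavelet} together with Lemma~\ref{Lem::SobEmbed}\,(\ref{Item::SobEmbed::3}) for the $W^mL^{p,q}$ upper bound when $q>p$). The scaling identities, the disjointness bookkeeping, and the Besov exponent match $s-\tfrac d{\tilde p}+\tfrac d2=\tfrac d2-\tfrac d{p^*}$ all check out, and the split into the regimes $q\le p$ and $q>p$ is handled correctly; in particular you are right that the $q>p$ case cannot be done by disjoint-support arithmetic alone and genuinely needs the scale separation captured by the Besov description.

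Two small points worth making explicit in a final write-up: (i) the centres $m_j$ must lie in the dyadic lattice $Q_j=2^{-j}\Z^d$ for the wavelet coefficients to be read off via $L^2$-orthonormality, which is compatible with the packing since the mesh $2^{-j}$ is negligible compared with the support radius $2^{L-j}$; (ii) the parameters $\eps$ in Proposition~\ref{Prop::Wavelet} and $s>m$ should be fixed so that $\tilde p,q\in[\eps,\infty]$ and $s\le\eps^{-1}$, which is harmless since $s$ may be taken arbitrarily close to $m$. With these cosmetic adjustments your proof stands on its own and would serve as a fully detailed replacement for the paper's citation.
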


\begin{proof}
See Proposition 5 in \cite{ChianJanLidingII} for the detailed proof.
\end{proof}

\section{Fourier Transformation}

Results in this section are based on \cite{EdmundsGurkaLang}.

For a function \( f \in L^1(\R^d) \) we define its \emph{Fourier transform} by
\begin{equation}\label{FourTrans}
	\FT(f)(x)
	=
	\widehat{f}(x)
	=
	\int_{\R^d} f(t) \, e^{-i\langle x,t\rangle} \, \dd t,
\end{equation}
where for \( x=(x_1,\dots,x_d) \) and \( t=(t_1,\dots,t_d) \) we have 
\[
\langle x,t\rangle = \sum_{k=1}^{d} x_k t_k.
\]

We recall some basic properties of the Fourier transform:
\begin{enumerate}[{\rm(i)}]
	\item \textbf{Linearity:} \quad 
	\(\FT(\alpha f+\beta g)=\alpha\, \FT(f)+\beta\, \FT(g)\) (where \(\alpha\) and \(\beta\) are constants);
	\item \textbf{Scaling:} \quad 
	\(\FT\big(f(c\,t)\big)(x)= c^{-d}\,\widehat{f}\big(x/c\big)\) (with \( c>0 \) constant).
\end{enumerate}

It is immediate that
\[
|\FT(f)(x)| \le \int_{\R^d} |f(t)| \, \dd t,
\]
and by Parseval's formula, we have
\[
\int_{\R^d} |\FT(f)(x)|^2 \, \dd x = (2\pi)^d \int_{\R^d} |f(x)|^2 \, \dd x.
\]
Combining this with the Riesz–Thorin interpolation theorem, one obtains
\[
\FT: L^p(\R^d) \to L^{p'}(\R^d), \quad \text{for } 1 \le p \le 2,
\]
(see, e.g., \cite[Section 1.2]{MR482275} for more details).

The following lemma will be needed later.

\begin{lem}[{\cite[Lemma 2.3]{EdmundsGurkaLang}}]\label{DilatL}
Let \( F: \R^d \to \R \) be a measurable function. For \( c>0 \) denote
\[
F_c(x)=F(c\,x), \quad x\in\R^d.
\]
Then, for all \( s>0 \),
\[
(F_c)^*(s)= F^*(c^d\,s).
\]
\end{lem}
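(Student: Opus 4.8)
The statement to prove is that for a measurable function $F:\R^d\to\R$, if $F_c(x)=F(cx)$ for $c>0$, then $(F_c)^*(s)=F^*(c^d s)$ for all $s>0$. The plan is to reduce everything to the definition of the nonincreasing rearrangement via the distribution function, exactly as recalled in \eqref{DistF} and \eqref{NonRear}. The key observation is that dilation by $c$ in the argument rescales Lebesgue measure by the factor $c^{-d}$, so it changes the distribution function by that same factor without touching the level $\tau$ itself.

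\medskip

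First I would compute the distribution function of $F_c$. For any $\tau>0$,
\[
(F_c)_*(\tau)=\mu\{x\in\R^d: |F(cx)|>\tau\}
=\mu\big(c^{-1}\{y\in\R^d:|F(y)|>\tau\}\big)
=c^{-d}\,\mu\{y:|F(y)|>\tau\}
=c^{-d}\,F_*(\tau),
\]
using the change of variables $y=cx$ together with the scaling property $\mu(c^{-1}E)=c^{-d}\mu(E)$ of Lebesgue measure in $\R^d$. (This identity is valid whether or not the quantities are finite.) Once this is in hand, the second step is to feed it into the definition \eqref{NonRear}: for fixed $s>0$,
\[
(F_c)^*(s)=\inf\{\tau>0:(F_c)_*(\tau)\le s\}
=\inf\{\tau>0: c^{-d}F_*(\tau)\le s\}
=\inf\{\tau>0: F_*(\tau)\le c^d s\}
=F^*(c^d s),
\]
where the last equality is again just \eqref{NonRear} applied at the point $c^d s$. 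This completes the argument.

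\medskip

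There is really no serious obstacle here; the only point requiring any care is the measure-scaling identity $\mu(c^{-1}E)=c^{-d}\mu(E)$, which should be stated explicitly since it is the whole content of the lemma, and the (routine) remark that the chain of infima above is legitimate even when $F_*(\tau)$ takes the value $+\infty$ for small $\tau$. If one prefers to avoid invoking the measure-scaling identity as a black box, one can note that $|F_c|$ and $|F|$ are related by a measure-preserving change of variables after rescaling the measure, or alternatively verify the two inequalities $(F_c)^*(s)\le F^*(c^d s)$ and $(F_c)^*(s)\ge F^*(c^d s)$ separately from the defining infima; both directions are immediate from the monotonicity of $\tau\mapsto (F_c)_*(\tau)$ and the displayed identity for the distribution function.
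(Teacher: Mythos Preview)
Your proof is correct and follows exactly the same route as the paper: compute $(F_c)_*(\tau)=c^{-d}F_*(\tau)$ via the change of variables $y=cx$, then feed this into the definition \eqref{NonRear} to obtain $(F_c)^*(s)=F^*(c^d s)$.
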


\begin{proof}
Assume that \( c,\, t>0 \). Then
\[
(F_c)_{*}(t)
=
\mu\{x\in\R^d : |F_c(x)|>t\}
=
\mu\{x\in\R^d : |F(c x)|>t\}.
\]
Performing the change of variables \( y = c x \), we obtain
\[
\mu\{x\in\R^d : |F(c x)|>t\}
=
\int_{\{y\in\R^d:|F(y)|>t\}} c^{-d} \, \dd y
= c^{-d} F_{*}(t).
\]
Thus,
\[
(F_c)^*(s)
=
\inf\{t>0 : (F_c)_{*}(t) \le s\}
=
\inf\{t>0 : F_{*}(t) \le c^d s\}
=
F^*(c^d s).
\]
\end{proof}

We first recall some known results for the convenience of the reader.

\begin{thm}[{\cite[Th. 3.1]{EdmundsGurkaLang}}]\label{BddFT}
The Fourier transform \( \FT \) is a bounded linear operator from \( L^p(\R^d) \) into \( L^{p',p}(\R^d) \), i.e.,
\begin{equation}\label{spojFTdoLor}
   \FT: L^p(\R^d) \to L^{p',p}(\R^d),
\end{equation}
provided that \( p \in (1,2) \).
\end{thm}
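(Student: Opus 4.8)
The plan is to obtain \eqref{spojFTdoLor} by \emph{real} interpolation between the two classical endpoint mapping properties of $\FT$; the complex (Riesz--Thorin) method only yields the coarser statement $\FT:L^p\to L^{p'}$, so the real method, which produces Lorentz target spaces whose second index is free, is exactly what is needed here. The two endpoints are immediate: from the integral formula \eqref{FourTrans} one has $\|\widehat f\|_{L^\infty(\R^d)}\le\|f\|_{L^1(\R^d)}$, that is $\FT:L^1(\R^d)\to L^{\infty,\infty}(\R^d)$ with norm at most $1$; and Parseval's formula recalled above gives $\FT:L^2(\R^d)\to L^{2,2}(\R^d)$ with norm $(2\pi)^{d/2}$. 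Since $\FT$ is defined consistently on $L^1\cap L^2$, it extends unambiguously to $L^1+L^2$, so this pair of bounded maps is admissible for the real interpolation functor.

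Next I would identify the relevant interpolation spaces. Fix $p\in(1,2)$ and set $\theta:=2-\tfrac2p=\tfrac2{p'}\in(0,1)$, so that $\tfrac1p=(1-\theta)\cdot1+\theta\cdot\tfrac12$ and $\tfrac1{p'}=(1-\theta)\cdot0+\theta\cdot\tfrac12$. By the standard description of the real interpolation spaces of a Lorentz couple --- the same fact already used in \eqref{Eqn::SobEmbed::RealInterpo}, see \cite[Theorem~4.3]{HolmstedtInterpolation}, which remains valid when one endpoint exponent equals $\infty$ --- one gets
\[
\big(L^1(\R^d),L^2(\R^d)\big)_{\theta,p}=L^{p,p}(\R^d)=L^p(\R^d),
\qquad
\big(L^{\infty,\infty}(\R^d),L^{2,2}(\R^d)\big)_{\theta,p}=L^{p',p}(\R^d),
\]
the point being that on the target side the first Lorentz index $p'$ differs from the second index $p$, so the interpolation space is a genuine Lorentz space rather than a Lebesgue space.

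Applying the interpolation property of the real method to $\FT$, with the endpoint data above, then gives
\[
\FT:\ L^p(\R^d)=\big(L^1,L^2\big)_{\theta,p}\ \longrightarrow\ \big(L^{\infty,\infty},L^{2,2}\big)_{\theta,p}=L^{p',p}(\R^d),
\]
with operator norm controlled by a constant times $1^{\,1-\theta}\big((2\pi)^{d/2}\big)^{\theta}$, which is exactly \eqref{spojFTdoLor}. The only step that demands any care --- and hence the main (mild) obstacle --- is the second interpolation identity: that interpolating the couple $(L^\infty,L^2)$ with second index taken to be $p$ (rather than the ``diagonal'' value $p'$) returns $L^{p',p}$, with the fine index inherited from the interpolation parameter and decoupled from the coarse index; this is precisely the sharpening of Hausdorff--Young that the theorem records. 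Once that is granted, the rest is bookkeeping with $\theta$.
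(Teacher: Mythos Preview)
Your proof is correct and follows essentially the same route as the paper: both use real interpolation between the endpoints $\FT:L^1\to L^\infty$ and $\FT:L^2\to L^2$ (the paper phrases this as an application of the Marcinkiewicz interpolation theorem, citing \cite[Theorem~4.13]{BS}) to obtain $\FT:L^{p,r}\to L^{p',r}$, and then specialise $r=p$. The only cosmetic difference is that the paper states the intermediate conclusion for all $r\in[1,\infty]$ before setting $r=p$, whereas you fix the second index to be $p$ from the outset and write out the choice of $\theta$ explicitly.
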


\begin{proof}
We prove the assertion by interpolation. It is clear that the Fourier transform satisfies
\[
\FT: L^1(\R^d) \to L^\infty(\R^d) \quad \text{and} \quad \FT: L^2(\R^d) \to L^2(\R^d).
\]
Applying the Marcinkiewicz interpolation theorem (see, e.g., \cite[Theorem 4.13]{BS}), we obtain
\[
\FT: L^{p,r}(\R^d) \to L^{p',r}(\R^d)
\]
for every \( r \in [1,\infty] \). In particular, setting \( r = p \) and noting that \( L^p(\R^d) = L^{p,p}(\R^d) \), the result follows.
\end{proof}

\begin{rem}
If one applies the Riesz–Thorin interpolation theorem, one obtains the boundedness
\[
\FT: L^p(\R^d) \to L^{p'}(\R^d)
\]
for \( 1 \le p \le 2 \) (see, e.g., \cite[Theorem 1.2.1]{MR482275}). For \( p \in (1,2) \), this result is weaker than \eqref{spojFTdoLor} because \( L^{p',p}(\R^d) \hookrightarrow L^{p'}(\R^d) \).
\end{rem}

It follows from \cite[Theorem 5.1]{LefevrePiazzaFSSApplication} that the Fourier transform
\[
\FT: L^p(\R^d) \to L^{p'}(\R^d)
\]
is finitely strictly singular if and only if \( p \in (1,2) \).

In the following, which is the main result in  \cite{EdmundsGurkaLang} and where is possible to find more detailed proof, we prove that the Fourier transform from \( L^p \) into \( L^{p',p} \) is not strictly singular, which suggest that \( L^{p',p} \) is the optimal target space.

\begin{thm}[{\cite[Theorem 3.3]{EdmundsGurkaLang}}]\label{MainThhm}
Let \( p \in (1,2) \). Then the Fourier transform
\[
\FT: L^p(\R^d) \to L^{p',p}(\R^d)
\]
is not strictly singular.
\end{thm}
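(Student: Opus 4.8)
The plan is to exhibit an infinite-dimensional subspace $Z \subseteq L^p(\R^d)$ on which $\FT$ is bounded below, i.e.\ an isomorphism onto its image. The natural candidate is a subspace spanned by functions with pairwise disjoint, widely separated supports, chosen so that the Fourier transforms are, up to modulation, dilated copies of a single profile sitting on disjoint annuli in frequency space. Concretely, fix a nice bump $f$ (say $f \in C_c^\infty$, supported near the origin, with $\widehat f$ not identically zero), and for a rapidly increasing sequence of scales $c_k \to \infty$ set $f_k(t) = f(c_k t)$. By the scaling rule $\FT(f_k)(x) = c_k^{-d}\widehat f(x/c_k)$, so $\widehat{f_k}$ is concentrated (in the sense of mass distribution, via Lemma~\ref{DilatL}) on a set that, as $c_k$ grows, spreads out; choosing the $c_k$ so that these mass-concentration regions are essentially disjoint intervals in the rearrangement variable is the key device.

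First I would normalize: by Lemma~\ref{DilatL}, $(|\widehat{f_k}|)^*(s) = c_k^{-d}(|\widehat f|)^*(s/c_k^d)$, so $\|\widehat{f_k}\|_{L^{p',p}}$ is a fixed multiple of $\|f_k\|_{L^p}$ (both scale the same way — this is exactly the scaling-invariance that makes $L^{p',p}$ the right target). Thus after renormalizing we may assume $\|f_k\|_{L^p} = 1$ and $\|\widehat{f_k}\|_{L^{p',p}} \asymp 1$ uniformly. Next, for a finitely supported scalar sequence $(a_k)$, I want to compare $\|\sum_k a_k f_k\|_{L^p}$ with $\|\sum_k a_k \widehat{f_k}\|_{L^{p',p}}$. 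The lower bound for the latter is where Lemma~\ref{SumNonIncrRear} enters: by spacing the dilations $c_k$ so that the bulk of $|\widehat{f_k}|$ lives at rearrangement-scales in pairwise disjoint intervals $I_k \subset (0,\infty)$, Lemma~\ref{SumNonIncrRear} gives $\big(\sum_k a_k \widehat{f_k}\big)^*(t) \ge \sum_k |a_k| \chi_{I_k}(t)(\widehat{f_k})^*(t)$, and hence $\|\sum a_k \widehat{f_k}\|_{L^{p',p}}^p \gtrsim \sum_k |a_k|^p$ after integrating. Meanwhile, to control $\|\sum a_k f_k\|_{L^p}$ from above by $(\sum |a_k|^p)^{1/p}$, I would instead arrange the \emph{physical-space} supports of the $f_k$ to be disjoint (replace $f_k(t)$ by $f_k(t - v_k)$ for widely separated centers $v_k$; this only multiplies $\widehat{f_k}$ by a unimodular factor and does not affect any rearrangement), so that $\|\sum a_k f_k(\cdot - v_k)\|_{L^p}^p = \sum_k |a_k|^p\|f_k\|_{L^p}^p = \sum |a_k|^p$ exactly. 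Combining, $\|\FT(\sum a_k f_k(\cdot - v_k))\|_{L^{p',p}} \gtrsim \|\sum a_k f_k(\cdot - v_k)\|_{L^p}$, so $Z = \overline{\spa}\{f_k(\cdot - v_k)\}$ works.

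The subtlety — and the main obstacle — is the compatibility of the two spacing requirements. Translating in physical space by $v_k$ modulates $\widehat{f_k}$ but does \emph{not} translate or dilate it, so the frequency-side mass of $\widehat{f_k}$ still lives on the same set regardless of $v_k$; thus disjointness of the $v_k$-translates (easy, pick $v_k$ far apart) is genuinely independent of the frequency-side separation controlled by the $c_k$. The real work is verifying the frequency-side claim: one must show that for a fixed profile $\widehat f$, the dilates $c_k^{-d}\widehat f(\cdot/c_k)$ have nonincreasing rearrangements whose ``effective supports'' can be made to occupy disjoint rearrangement-intervals $I_k$ as $c_k \to \infty$ sufficiently fast. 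Since $(|\widehat{f_k}|)^*(s) = c_k^{-d}(|\widehat f|)^*(s/c_k^d)$ is just a fixed rescaling of one fixed decreasing function $(|\widehat f|)^*$, its graph simply shifts to larger $s$ and smaller height as $c_k$ grows; picking $c_{k+1}/c_k$ large enough forces the ``plateau'' region of the $(k+1)$-st rearrangement to lie beyond where the $k$-th has essentially vanished, at least after discarding negligible tails. Making ``negligible'' quantitative — choosing $I_k$ and the scalar renormalizations so that the error from the discarded tails does not swamp the main term $\sum |a_k|^p$ — is the one genuinely technical estimate; everything else is bookkeeping with the scaling identities. For the full details I refer to \cite{EdmundsGurkaLang}.
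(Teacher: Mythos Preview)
Your approach is essentially that of the paper: build the subspace from dilated copies of a fixed profile, use the scaling invariance of $L^p \to L^{p',p}$ (your observation that $\|\widehat{f_k}\|_{p',p}/\|f_k\|_p$ is constant is exactly Lemma~\ref{FToffa}), and obtain the lower Fourier-side bound via Lemma~\ref{SumNonIncrRear}. The paper takes $f = \chi_{[-1,1]^d}$ rather than a smooth bump and does \emph{not} translate: instead it carves out disjoint annular sets $G_j$ on the physical side as well and controls the residual inner pieces $E_j$ as small errors, so your translation trick is a genuine simplification there --- it turns the $L^p$-side upper bound into an exact equality $\|\sum a_k f_k(\cdot-v_k)\|_p^p=\sum|a_k|^p$, and the unimodular modulations it introduces on the Fourier side are harmless for rearrangements.

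One point to tighten: Lemma~\ref{SumNonIncrRear} requires the summands to have pairwise disjoint supports in $\R^d$, which your $\widehat{f_k}$ (or their modulations $e^{-i\langle\cdot,v_k\rangle}\widehat{f_k}$) do not --- they have full support since $f\in C_c^\infty$. So the inequality $\big(\sum_k a_k \widehat{f_k}\big)^*(t) \ge \sum_k |a_k| \chi_{I_k}(t)(\widehat{f_k})^*(t)$ as you wrote it does not follow from the lemma. You must first truncate to the disjoint frequency annuli you allude to, apply the lemma to $\sum_k a_k \chi_{\widehat G_k}\widehat{f_k}$, and bound the discarded tails separately by the triangle inequality in $L^{p',p}$; this is precisely what the paper does with its sets $\widehat G_j$ and the estimates \eqref{FinalTestF02}, and is indeed the ``one genuinely technical estimate'' you identify at the end.
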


This result contrasts with the fact that the mapping
\[
\FT: L^p(\R^d) \rightarrow L^{p'}(\R^d)
\]
is finitely strictly singular when \( 1 < p < 2 \) (see \cite{LefevrePiazzaFSSApplication}).

\subsection{Proof of Theorem \ref{MainThhm}}

For simplicity, we provide a detailed proof in the one-dimensional case (\( d=1 \)); the proof in higher dimensions is analogous.

We begin by introducing suitable \emph{test functions}. For a fixed \( a>0 \) define
\begin{equation}\label{Funct_fa}
  f_a(t)=\chi_{[-1/a,1/a]}(t),\quad t\in\R.
\end{equation}
It is easy to verify that
\[
\FT f_a(x)=\frac{2\sin(x/a)}{x},\quad x\in\R.
\]

\begin{lem}\label{FToffa}
Let \( a>0 \) and \( p>1 \), and let \( f_a \) be the function defined in \eqref{Funct_fa}. Define
\[
c_p=2^{-1/p}\|\FT f_1\|_{p',p}.
\]
Then
\begin{equation}\label{NormsOFfa}
  \|f_a\|_p=(2/a)^{1/p},\quad
  \|\FT f_a\|_{p',p}=c_p\,(2/a)^{1/p}.
\end{equation}
\end{lem}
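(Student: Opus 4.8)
The plan is to compute both norms directly, using only the scaling property of the Fourier transform, the dilation identity for the nonincreasing rearrangement from Lemma~\ref{DilatL}, and the definition of the Lorentz norm. First I would record the trivial computation $\|f_a\|_p = \left(\int_{-1/a}^{1/a} 1\,\dd t\right)^{1/p} = (2/a)^{1/p}$, which already fixes the normalization. Next, to evaluate $\|\FT f_a\|_{p',p}$ I would relate $f_a$ to $f_1$ by the dilation $f_a(t) = f_1(at) = (f_1)_a(t)$; applying the Scaling property from property (ii) of the Fourier transform gives $\FT f_a(x) = a^{-1}\,\widehat{f_1}(x/a)$, so $\FT f_a = a^{-1}\,(\FT f_1)_{1/a}$ in the notation of Lemma~\ref{DilatL} (where the subscript denotes the argument dilation $F_c(x) = F(cx)$ with $c = 1/a$).

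The heart of the computation is then the behavior of the Lorentz norm under this dilation. By Lemma~\ref{DilatL} with $c = 1/a$ (so $c^d = 1/a$ in dimension $d=1$), one has $\big((\FT f_1)_{1/a}\big)^*(s) = (\FT f_1)^*(s/a)$, and hence $(\FT f_a)^*(s) = a^{-1}(\FT f_1)^*(s/a)$. Substituting into the definition
\[
\|g\|_{p',p} = \left(p'\int_0^\infty \big(s^{1/p'} g^*(s)\big)^{p}\,\frac{\dd s}{s}\right)^{1/p}
\]
and making the change of variable $u = s/a$ produces a clean factor: the $s^{1/p'}$ contributes $a^{1/p'}$, the prefactor $a^{-1}$ contributes $a^{-1}$, and $\dd s/s$ is scale-invariant, so altogether $\|\FT f_a\|_{p',p} = a^{-1} a^{1/p'} \|\FT f_1\|_{p',p} = a^{-1/p}\|\FT f_1\|_{p',p}$. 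Writing $a^{-1/p} = 2^{-1/p}(2/a)^{1/p}$ and recalling $c_p = 2^{-1/p}\|\FT f_1\|_{p',p}$ gives exactly $\|\FT f_a\|_{p',p} = c_p (2/a)^{1/p}$, as claimed.

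I do not anticipate a genuine obstacle here: the lemma is a bookkeeping exercise whose only subtlety is keeping the powers of $a$ straight through the dilation of the rearrangement and the change of variables in the Lorentz integral. The one point that deserves a line of care is confirming that $\FT f_1 = 2\sin(x)/x$ indeed lies in $L^{p',p}(\R)$ for $p > 1$ (so that $c_p$ is finite), which follows since $\sin(x)/x$ is bounded near the origin and decays like $1/|x|$ at infinity, placing it in $L^{p',p}$ precisely when $p' > 1$, i.e.\ $p > 1$; the case $p = 2$, $d = 1$ being the familiar borderline is excluded by the hypothesis $p>1$ together with the fact that we only need finiteness, not the endpoint. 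The $d$-dimensional version is identical after replacing $a$ by $a^d$ in the dilation factor via Lemma~\ref{DilatL}.
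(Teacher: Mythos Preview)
Your proposal is correct and follows essentially the same route as the paper's proof: both use the dilation relation $f_a(t)=f_1(at)$, the scaling property of $\FT$, Lemma~\ref{DilatL} to obtain $(\FT f_a)^*(\tau)=a^{-1}(\FT f_1)^*(\tau/a)$, and then a change of variable in the Lorentz integral to extract the factor $a^{-1/p}$. Your additional remark verifying that $c_p<\infty$ is a welcome sanity check that the paper leaves implicit; the aside about $p=2$ being a ``borderline'' is slightly off (in fact $\sin x/x\in L^{p',p}$ for all $p>1$, including $p=2$), but this does not affect the argument.
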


\begin{proof}
The first equality is immediate. For the second, observe that
\[
f_a(t)=f_1(at),\quad t\in\R.
\]
Using linearity and the scaling property of the Fourier transform together with Lemma~\ref{DilatL}, we obtain
\[
(\FT f_a)^*(\tau)
=
a^{-1}\big(\FT(f_1(x/a))\big)^*(\tau)
=
a^{-1}(\FT f_1)^*(\tau/a),\quad \tau>0.
\]
Making the change of variables \( y=a\tau \) leads to
  \begin{align*}
\|\FT f_a\|_{p',p}^p &
=
\int_{0}^{\infty}\Big(\tau^{1/p'}\, a^{-1}\, (\FT f_1)^*(\tau/a)\Big)^p \frac{\dd \tau}{\tau}
\\
& =
a^{-1}\int_{0}^{\infty}\Big(y^{1/p'}\, (\FT f_1)^*(y)\Big)^p\frac{\dd y}{y}
=
a^{-1}\|\FT f_1\|_{p',p}^p.
  \end{align*}
Thus, \(\|\FT f_a\|_{p',p} = c_p\,(2/a)^{1/p}\).
\end{proof}

\paragraph{\bf Basic Test Function.}\quad
Define
\begin{equation}\label{BasicTF}
  g_a(t)=(2/a)^{-1/p}f_a(t),\quad t\in\R, a>0.
\end{equation}
Then, by \eqref{NormsOFfa},
\begin{equation}\label{NormsOFga}
  \|g_a\|_p=1,\quad
  \|\FT g_a\|_{p',p}=c_p.
\end{equation}

Next, for a given \(\gamma>0\), by the absolute continuity of the norms \(\|\cdot\|_p\) and \(\|\cdot\|_{p',p}\), one can choose numbers \(\eta\in(0,1)\) and \(\nu^L, \nu^R\) with \(0<\nu^L<\nu^R<\infty\) such that
\begin{equation}\label{EstwithEP}
  \|g_1\,\chi_{[-1,-\eta)\cup(\eta,1]}\|_p\ge1-\gamma,
\end{equation}
and
\begin{equation}\label{EstwithEP02}
  \|(\FT g_1)\,\chi_{[-\nu^R,-\nu^L)\cup(\nu^L,\nu^R]}\|_{p',p}\ge c_p\,(1-\gamma/2).
\end{equation}
Using the definition of the Lorentz norm, we have
\begin{align*}
    \|(\FT g_1)\,\chi_{[-\nu^R,-\nu^L)\cup(\nu^L,\nu^R]}\|_{p',p}
& =  \\
 \Biggl(\int_{0}^{2(\nu^R-\nu^L)}\Bigl(y^{1/p'} & \big((\FT g_1\,\chi_{[-\nu^R,-\nu^L)\cup(\nu^L,\nu^R]})^*(y)\bigr)\Bigr)^p\frac{\dd y}{y}\Biggr)^{1/p}.
\end{align*}

By the absolute continuity of the integral and \eqref{EstwithEP02}, there exist numbers \(\delta^L\) and \(\delta^R\) with \(0<\delta^L<\delta^R<2(\nu^R-\nu^L)\) such that
\begin{equation}\label{EstwithEP03}
  \Biggl(\int_{\delta^L}^{\delta^R}\Bigl(y^{1/p'}\big((\FT g_1\,\chi_{[-\nu^R,-\nu^L)\cup(\nu^L,\nu^R]})^*(y)\bigr)^p\frac{\dd y}{y}\Biggr)^{1/p}
  \ge c_p\,(1-\gamma).
\end{equation}
By the same scaling argument as in the proof of Lemma~\ref{FToffa}, for any \( a>0 \) we have
\begin{align}\label{ScaledTestF01}
 & \|g_a\,\chi_{[-1/a,-\eta/a)\cup(\eta/a,1/a]}\|_p\ge1-\gamma, \\
 \label{ScaledTestF02}
 &  \Biggl(\int_{\delta^L a}^{\delta^R a}\Bigl(y^{1/p'}\big((\FT g_a\,
 \chi_{[-\nu^R a,-\nu^L a)\cup(\nu^L a,\nu^R a]})^*(y)\bigr)^p\frac{\dd y}{y}\Biggr)^{1/p}
 \ge c_p\,(1-\gamma).
\end{align}

\paragraph{\bf Construction of an Infinite-Dimensional Subspace.}\quad
Let \(\varepsilon>0\) be fixed. For \(\gamma=\varepsilon/2\) choose numbers \(\eta\), \(\nu^L\), and \(\nu^R\) such that \eqref{EstwithEP} and \eqref{EstwithEP03} hold. Then \eqref{ScaledTestF01} and \eqref{ScaledTestF02} hold with \(\gamma=\varepsilon/2\), and we denote these parameters by \(\eta_1\), \(\nu^L_1\), \(\nu^R_1\), \(\delta^L_1\), and \(\delta^R_1\) with \(a_1=1\). 

In the next step, with \(\gamma=\varepsilon/2^2\) we choose numbers \(\eta_2 \in (0,\eta_1)\), \(\nu^L_2 \in (0,\nu^L_1)\), and \(\nu^R_2 \in (\nu^R_1,\infty)\) so that the corresponding inequalities hold with parameters \(\eta_2\), \(\nu^L_2\), \(\nu^R_2\) and with new scaling factor \(a_2\) satisfying
\[
a_{2}>a_1\max\{1/\eta_1,\,\nu^R_1/\nu^L_2,\,\delta^R_1/\delta^L_2\}.
\]
This choice guarantees that
\[
\big([-1/a_1,-\eta_1/a_1)\cup(\eta_1/a_1,1/a_1)\big)
\cap
\big([-1/a_2,-\eta_2/a_2)\cup(\eta_2/a_2,1/a_2)\big)
=\emptyset,
\]
and similarly,
\[
\big([-\nu^R_1 a_1,-\nu^L_1 a_1)\cup(\nu^L_1 a_1,\nu^R_1 a_1]\big)
\cap
\big([-\nu^R_2 a_2,-\nu^L_2 a_2)\cup(\nu^L_2 a_2,\nu^R_2 a_2]\big)
=\emptyset,
\]
as well as
\[
(a_1\delta^L_1, a_1\delta^R_1) \cap (a_2\delta^L_2, a_2\delta^R_2) = \emptyset.
\]
Proceeding inductively, we obtain sequences \(\{\eta_j\}_{j=1}^\infty\), \(\{\nu^L_j\}_{j=1}^\infty\), \(\{\nu^R_j\}_{j=1}^\infty\), \(\{\delta^L_j\}_{j=1}^\infty\), \(\{\delta^R_j\}_{j=1}^\infty\), and \(\{a_j\}_{j=1}^\infty\) and define the functions
\begin{equation}\label{SystOFtestFu}
  \varphi_j = g_{a_j}, \quad j\in\Ne.
\end{equation}
These functions satisfy
\begin{align}\label{FinalTestF01}
  \|\varphi_j\,\chi_{G_j}\|_p &\ge 1-\varepsilon 2^{-j}, \\
 \label{FinalTestF02}
  \Biggl(\int_{I_j}\Bigl(y^{1/p'}\big((\FT \varphi_j\,\chi_{\widehat{G}_j})^*(y)\bigr)^p\frac{\dd y}{y}\Biggr)^{1/p} &\ge c_p(1-\varepsilon 2^{-j}),
\end{align}
where the sets
\[
G_j = \Bigl[-\frac{1}{a_j},-\frac{\eta_j}{a_j}\Bigr) \cup \Bigl(\frac{\eta_j}{a_j},\frac{1}{a_j}\Bigr], \quad
\widehat{G}_j = \Bigl[-\nu^R_j a_j, -\nu^L_j a_j\Bigr) \cup \Bigl(\nu^L_j a_j, \nu^R_j a_j\Bigr],
\]
and
\[
I_j = (a_j\delta^L_j, a_j\delta^R_j),
\]
satisfy
\begin{equation}\label{SystDisj}
  G_j \cap G_k = \emptyset,\quad \widehat{G}_j \cap \widehat{G}_k = \emptyset,\quad
  I_j \cap I_k = \emptyset,
\end{equation}
for all \( j \neq k \).

\begin{proof}[Proof of Theorem~\ref{MainThhm}]
\textbf{Case \( d=1 \).}\quad
Following the definition~ of strict singular operators, we show that there exists an infinite-dimensional subspace \( X \subset L^p(\R) \) and a constant \( b>0 \) such that for any \( f\in X \) with \( \|f\|_p=1 \), we have
\[
\|\FT f\|_{p',p} \ge b.
\]

Set $\varepsilon>0$ and consider the corresponding infinite-dimensional closed subspace 
\[
X = \operatorname{span}\big\{ \varphi_j : j\in\Ne \big\} \subset L^p(\R),
\]
where the sequence \( \{\varphi_j\} \) is defined in \eqref{SystOFtestFu}. Any function \( f \in X \) can be written as
\begin{equation}\label{Funct_f}
  f = \sum_{j=1}^{\infty} \alpha_j \varphi_j, \quad \alpha_j \in \R.
\end{equation}

We now estimate \( \|f\|_p \) from above and \( \|\FT f\|_{p',p} \) from below.

\textbf{Upper estimate for \( \|f\|_p \):}  
Denote 
\[
E_j = \Bigl[-\frac{\eta_j}{a_j},\frac{\eta_j}{a_j}\Bigr] = \Bigl[-\frac{1}{a_j},\frac{1}{a_j}\Bigr] \setminus G_j, \quad j\in\Ne.
\]
Then clearly,
\[
\|\varphi_j \chi_{E_j}\|_p \le \varepsilon\, 2^{-j}.
\]
By the Minkowski inequality,
\[
\|f\|_p
\le \Bigl\|\sum_{j=1}^{\infty} \alpha_j \varphi_j \chi_{G_j}\Bigr\|_p
+ \Bigl\|\sum_{j=1}^{\infty} \alpha_j \varphi_j \chi_{E_j}\Bigr\|_p.
\]
Since the supports of \( \varphi_j \chi_{G_j} \) are disjoint (by \eqref{SystDisj}), we have
\[
\Bigl\|\sum_{j=1}^{\infty} \alpha_j \varphi_j \chi_{G_j}\Bigr\|_p^p
=
\sum_{j=1}^{\infty} |\alpha_j|^p \|\varphi_j \chi_{G_j}\|_p^p
\le \sum_{j=1}^{\infty} |\alpha_j|^p,
\]
and, using \( |\alpha_j| \le A \) (where \( A = \bigl(\sum_{j=1}^{\infty} |\alpha_j|^p\bigr)^{1/p} \)),
\[
\Bigl\|\sum_{j=1}^{\infty} \alpha_j \varphi_j \chi_{E_j}\Bigr\|_p
\le A \sum_{j=1}^{\infty} \|\varphi_j \chi_{E_j}\|_p
\le A \sum_{j=1}^{\infty} \frac{\varepsilon}{2^{j}} = A\varepsilon.
\]
Hence,
\begin{equation}\label{UpperEst}
\|f\|_p \le A(1+\varepsilon).
\end{equation}

\textbf{Lower estimate for \( \|\FT f\|_{p',p} \):}  
We have
\[
\|\FT f\|_{p',p} = \Bigl\|\sum_{j=1}^{\infty} \alpha_j \FT \varphi_j \Bigr\|_{p',p}
\ge \Bigl\|\sum_{j=1}^{\infty} \alpha_j \chi_{\widehat{G}_j} \FT \varphi_j \Bigr\|_{p',p}.
\]
Since the functions \( \chi_{\widehat{G}_j} \FT \varphi_j \) have disjoint supports (by \eqref{SystDisj}), Lemma~\ref{SumNonIncrRear} implies that for all \( t>0 \),
\[
\Bigl(\sum_{j=1}^{\infty} \alpha_j \chi_{\widehat{G}_j} \FT \varphi_j \Bigr)^*(t)
\ge \sum_{j=1}^{\infty} |\alpha_j|\, \chi_{I_j}(t) \Bigl(\chi_{\widehat{G}_j} \FT \varphi_j\Bigr)^*(t).
\]
Thus,  using above estimates and 
\eqref{SystDisj} and \eqref{FinalTestF02} we have we have
\begin{multline}\label{est01}
\|\FT f\|_{p',p}^p
=
\int_{0}^\infty \Bigl(t^{1/p'} \Bigl(\sum_{j=1}^{\infty} \alpha_j \chi_{\widehat{G}_j} \FT \varphi_j \Bigr)^*(t)\Bigr)^p \frac{\dd t}{t} \\
\ge \sum_{j=1}^{\infty} |\alpha_j|^p
\int_{I_j} \Bigl(t^{1/p'} \bigl((\FT \varphi_j \, \chi_{\widehat{G}_j})^*(t)\bigr)\Bigr)^p \frac{\dd t}{t}
\ge \sum_{j=1}^{\infty} |\alpha_j|^p \bigl(c_p(1-\varepsilon2^{-j})\bigr)^p.
\end{multline}
That is,
\begin{equation}\label{LowerEst}
\|\FT f\|_{p',p} \ge \Bigl(\sum_{j=1}^{\infty} |\alpha_j|^p\Bigr)^{1/p} c_p(1-\varepsilon)
= A\, c_p (1-\varepsilon).
\end{equation}

Combining the upper estimate \eqref{UpperEst} with the lower estimate \eqref{LowerEst} shows that the Fourier transform, restricted to the subspace \( X \), is bounded below by a constant independent of the dimension of \( X \). This completes the proof for \( d=1 \).

\textbf{Case \( d>1 \):}  
In this case, consider the cube
\[
Q_a = \Bigl[-\frac{1}{a},\frac{1}{a}\Bigr]^d,
\]
and define the test function
\[
g_a(t) = (2/a)^{-d/p}\chi_{Q_a}(t), \quad t\in\R^d.
\]
Then one easily verifies that
\[
\|g_a\|_p=1 \quad \text{and} \quad \|\FT g_a\|_{p',p}=c_{d,p},
\]
where \( c_{d,p} = \|\FT g_1\|_{p',p} \). The proof in the \( d>1 \) case follows analogously, and we omit the details.
\end{proof}

The above result can be modified to prove the following theorems, which illustrate the optimal source and target spaces for the Fourier transform. For more details see \cite{EdmundsGurkaLang}.

\begin{thm}[{\cite[Theorem 5.6]{EdmundsGurkaLang}}]\label{MainDiscreteThm}
Let \( p\in(1,2) \). The discrete Fourier transform is a bounded linear operator from \( L^p(\T^d) \) into the sequence space \( \ell^{p',p}(\Z^d) \); however, it is not strictly singular.
\end{thm}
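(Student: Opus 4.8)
\textbf{Proof proposal for Theorem~\ref{MainDiscreteThm}.}
The plan is to mirror the proof of Theorem~\ref{MainThhm} in the periodic setting. The boundedness $\FT\colon L^p(\T^d)\to\ell^{p',p}(\Z^d)$ follows by the same interpolation argument as in Theorem~\ref{BddFT}: the trivial bounds $\FT\colon L^1(\T^d)\to\ell^\infty(\Z^d)$ and $\FT\colon L^2(\T^d)\to\ell^2(\Z^d)$ (Parseval) together with the Marcinkiewicz interpolation theorem give $\FT\colon L^{p,r}(\T^d)\to\ell^{p',r}(\Z^d)$ for all $r\in[1,\infty]$, and taking $r=p$ yields the claim since $L^p=L^{p,p}$. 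The substance of the theorem is the failure of strict singularity, which I would establish by exhibiting an infinite-dimensional subspace $X\subset L^p(\T^d)$ on which $\FT$ is bounded below.

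For $d=1$ I would take a lacunary-type construction adapted to the integers. Fix a rapidly increasing sequence of integers $(N_j)_{j\ge1}$ and consider, on $\T=\R/2\pi\Z$, functions $\varphi_j$ that are (normalised) modulated bumps of the form $\varphi_j(t)=h_j(t)e^{iN_j t}$, where $h_j$ is a fixed smooth profile so that $\|\varphi_j\|_p=1$; alternatively one can directly take $\varphi_j$ to be a normalised trigonometric polynomial whose spectrum sits in a short block $[N_j-m,N_j+m]$ of a fixed width $2m$. With $N_{j+1}$ chosen much larger than $N_j$, these spectral blocks $\widehat G_j=\{k\in\Z: |k-N_j|\le m\}$ are pairwise disjoint, and the sequences $(\FT\varphi_j)\chi_{\widehat G_j}$ have disjoint supports in $\Z$. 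One then checks, exactly as in \eqref{FinalTestF01}--\eqref{FinalTestF02}, that $\|\varphi_j\|_p=1$ and that a fixed fraction $c_p$ of the Lorentz $\ell^{p',p}$-mass of $\FT\varphi_j$ lives on $\widehat G_j$ in a fixed block $I_j$ of the rearrangement variable, with the $I_j$ pairwise disjoint (this uses that the discrete Fourier coefficients of a normalised bump decay at a fixed rate independent of the modulation). Setting $X=\overline{\spa}\{\varphi_j\}$ and $f=\sum_j\alpha_j\varphi_j$, the upper estimate $\|f\|_p\le A(1+\eps)$ with $A=(\sum|\alpha_j|^p)^{1/p}$ comes from Minkowski together with near-disjointness of the supports of $\varphi_j$ (here, unlike the $\R$ case, the $\varphi_j$ need not have literally disjoint supports in $t$, so one instead uses that the off-block part of each $\varphi_j$ is small in $L^p$, controlled by the Schwartz decay of $h_j$); the lower estimate $\|\FT f\|_{\ell^{p',p}}\ge A\,c_p(1-\eps)$ comes from Lemma~\ref{SumNonIncrRear} applied to the disjointly supported pieces $\alpha_j\chi_{\widehat G_j}\FT\varphi_j$, exactly as in \eqref{est01}--\eqref{LowerEst}. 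Combining the two gives $\|\FT f\|_{\ell^{p',p}}\ge \frac{c_p(1-\eps)}{1+\eps}\|f\|_p$ on $X$, so $\FT$ is an isomorphism onto its image on an infinite-dimensional subspace, i.e. not strictly singular. The case $d>1$ follows by the same argument using product bumps on $\T^d$ modulated along a lacunary sequence in $\Z^d$, and I would simply indicate this and omit the routine details.

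The main obstacle, and the one place the periodic proof genuinely differs from the proof of Theorem~\ref{MainThhm}, is the upper estimate for $\|f\|_p$: on $\R$ one arranges the test functions to have \emph{disjoint} supports in the time variable, which makes the $\ell^p$-sum identity exact, but on the compact group $\T^d$ one cannot have infinitely many disjointly supported functions of comparable $L^p$-norm bounded below, so the construction must instead be organised in \emph{frequency}, with the near-orthogonality of the $\varphi_j$ coming from lacunarity of their spectra rather than from spatial disjointness. Making the ``off-block tail is negligible'' estimate precise — i.e. choosing the profiles $h_j$ (or polynomial degrees $m$) and the gaps $N_{j+1}/N_j$ so that $\|f\|_p\le A(1+\eps)$ while still $\|\FT\varphi_j\chi_{\widehat G_j}\|_{\ell^{p',p}}\ge c_p(1-\eps 2^{-j})$ — is the technical heart of the argument; everything else transfers verbatim from the proof just given. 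For a fully detailed treatment we refer to \cite{EdmundsGurkaLang}.
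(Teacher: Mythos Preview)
The paper does not actually prove this theorem; it only asserts that the proof of Theorem~\ref{MainThhm} ``can be modified'' and refers to \cite{EdmundsGurkaLang}. Your proposed modification, however, abandons the dilation construction in favour of modulation, and this creates a genuine gap in the lower estimate. With a \emph{fixed} profile $h$ (or a block of fixed width $2m$), the sequences $(\FT\varphi_j)\chi_{\widehat G_j}$ are all translates in $\Z$ of one and the same finite sequence, and translates have \emph{identical} nonincreasing rearrangements. Hence the interval in the rearrangement variable where the $\ell^{p',p}$-mass of $(\FT\varphi_j)\chi_{\widehat G_j}$ is concentrated is the \emph{same} interval for every $j$; you cannot choose the $I_j$ pairwise disjoint, the hypothesis of Lemma~\ref{SumNonIncrRear} is not met, and the chain \eqref{est01}--\eqref{LowerEst} does not go through. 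The scaling in \eqref{ScaledTestF02} is precisely what produces disjoint $I_j$ in the continuous proof, and modulation has no analogue of it.

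The obstacle you identify --- that on the compact group $\T^d$ one cannot have infinitely many disjointly supported functions all of $L^p$-norm $1$ --- is in fact false: normalised indicators $|E_j|^{-1/p}\chi_{E_j}$ of disjoint shrinking sets do exactly this. The intended modification is therefore not to switch to frequency-side lacunarity but simply to run the dilation argument on $\T$: for large $a$ set $g_a=(2/a)^{-1/p}\chi_{[-1/a,1/a]}$ on $\T$, so that $\widehat{g_a}(k)=(2/a)^{-1/p}\,\frac{2\sin(k/a)}{k}$ is a sampling of the continuous transform. The spatial annuli $G_j\subset\T$, the frequency annuli $\widehat G_j\subset\Z$, and the rearrangement intervals $I_j$ are then pairwise disjoint for a rapidly increasing sequence $(a_j)$ by exactly the scaling in \eqref{ScaledTestF01}--\eqref{ScaledTestF02}, and both the upper and lower estimates transfer verbatim.
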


\section{Laplace Transformation}

Results in this sections are taken from \cite{EdmundsLangLaplace}.
The Laplace transform is a classical integral operator \( L \) defined for a suitable function \( f \) on \( (0, \infty) \) by  
\[
Lf(t) = \int_{0}^{\infty} f(s) e^{-ts} \, ds, \quad t \in (0, \infty).
\]  
The boundedness of \( L \) when acting between various function spaces has been widely investigated. It is well known that \( L \) is bounded from the Lebesgue space \( L^p(0, \infty) \) into \( L^{p'}(0, \infty) \), where 
$
p' = \frac{p}{p-1},
$
provided \( 1 \leq p < 2 \). However, when \( p > 2 \), there does not exist a Lebesgue space \( L^q \) that can serve as the target space for the Laplace transform.

To address this, the framework of Lorentz spaces \( L^{p,q} \) becomes essential. Lorentz spaces extend the classical Lebesgue spaces and allow for a more refined analysis. It has been shown that if \( p \in (1, \infty) \) and \( q \in [1, \infty] \), then  
\[
L : L^{p,q}(0, \infty) \rightarrow L^{p',q}(0, \infty)
\]  
is a bounded operator. Furthermore, within the category of rearrangement-invariant spaces, the domain \( L^{p,q}(0, \infty) \) and the target space \( L^{p',q}(0, \infty) \) form an optimal pair (see Buriánková, Edmunds, and Pick \cite{BEP2017} for details).

While boundedness is well established, it is natural to investigate whether
\[
L : L^{p,q}(0, \infty) \to L^{p',q}(0, \infty)
\]
possesses any properties stronger than boundedness, such as compactness. In this paper, we demonstrate that, in this setting, the Laplace transform fails to be compact. More significantly, we show that it is maximally non-compact and not strictly singular. These findings provide a deeper understanding of the functional-analytic properties of the Laplace transform in Lorentz spaces. We refer to \cite{EdmundsLangLaplace} for more details from which the following results are taken.

\subsection{Laplace Transformation --- Main Results}

Our objective is to prove the following statements:

\begin{theorem}[{\cite[Theorem 1]{EdmundsLangLaplace}}] \label{Thereom 1}
	Let \( p, q, r \in (1,\infty) \), with \( p' = \frac{p}{p-1} \) and \( 1 < q \leq r < \infty \). Then
	\[
	L : L^{p,q}(0,\infty) \to L^{p',r}(0,\infty)
	\]
	is maximally non-compact.
\end{theorem}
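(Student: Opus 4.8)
The plan is to prove the only nontrivial inclusion, $\alpha(L)\ge\|L\|$; since $\alpha(L)\le\|L\|$ for every bounded operator (as recorded in the introduction), this gives $\alpha(L)=\|L\|$, i.e.\ maximal non-compactness. Concretely, I want to show that for every $\rho<\|L\|$ the set $L(B_{L^{p,q}})$ has no finite $\rho$-net in $L^{p',r}(0,\infty)$. The mechanism is the dilation structure of the Laplace transform: writing $\tau_c f(s)=f(cs)$ one computes $L(\tau_c f)(t)=c^{-1}(Lf)(t/c)$, and a direct computation using Lemma~\ref{DilatL} (with $d=1$) and Definition~\ref{Defn::Lorentz} shows that the rescaled dilates $\tilde f_c:=c^{1/p}\tau_c f$ satisfy $\|\tilde f_c\|_{p,q}=\|f\|_{p,q}$ and $\|L\tilde f_c\|_{p',r}=\|Lf\|_{p',r}$ for every $c>0$; thus dilation acts isometrically on both spaces and preserves the Rayleigh quotient $\|Lf\|_{p',r}/\|f\|_{p,q}$. (The hypothesis $q\le r$ enters only to guarantee $\|L\|<\infty$, via $L\colon L^{p,q}\to L^{p',q}\hookrightarrow L^{p',r}$; see \cite{BEP2017}. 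Throughout, $\|\cdot\|_{p',r}$ denotes the genuine Banach norm of the Lorentz space, equivalent for $1<p<\infty$ to the functional in Definition~\ref{Defn::Lorentz}.)

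Fix $\varepsilon>0$. Since bounded, compactly supported functions are dense in $L^{p,q}(0,\infty)$ (because $q<\infty$) and $L$ is continuous, I can choose such an $f$ with $\|f\|_{p,q}=1$ and $\|Lf\|_{p',r}\ge(1-\varepsilon)\|L\|$; being integrable, $f$ satisfies $\|Lf\|_\infty\le\|f\|_1<\infty$. Now pick $c_n\to\infty$ and set $h_n:=L\tilde f_{c_n}=c_n^{1/p-1}(Lf)(\cdot/c_n)\in L(B_{L^{p,q}})$. Then $\|h_n\|_{p',r}=\|Lf\|_{p',r}$ for all $n$, whereas $\|h_n\|_\infty\le c_n^{1/p-1}\|f\|_1\to0$ because $1/p-1<0$ (this is the only place $p>1$ is used). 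So the $h_n$ form a bounded, $L^\infty$-null family whose mass escapes to $+\infty$ while remaining at distance $\|Lf\|_{p',r}$ from the origin.

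Finally I check that this family escapes every fixed finite set $\{y_1,\dots,y_N\}\subset L^{p',r}$. Using monotonicity of the Lorentz norm under restriction to a subset together with the triangle inequality, for any $R>0$ and any $i$,
\[
\|h_n-y_i\|_{p',r}\ \ge\ \|h_n\chi_{(R,\infty)}-y_i\chi_{(R,\infty)}\|_{p',r}\ \ge\ \|h_n\|_{p',r}-\|h_n\|_\infty\,\|\chi_{(0,R)}\|_{p',r}-\|y_i\chi_{(R,\infty)}\|_{p',r}.
\]
Since $r<\infty$, the $L^{p',r}$-norm is absolutely continuous, so $\max_i\|y_i\chi_{(R,\infty)}\|_{p',r}\to0$ as $R\to\infty$; and for each fixed $R$, $\|h_n\|_\infty\,\|\chi_{(0,R)}\|_{p',r}\to0$ as $n\to\infty$ (note $\|\chi_{(0,R)}\|_{p',r}<\infty$ since $p'<\infty$). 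Hence $\liminf_{n\to\infty}\min_{1\le i\le N}\|h_n-y_i\|_{p',r}\ge\|Lf\|_{p',r}\ge(1-\varepsilon)\|L\|$, so no union of balls $B(y_i,\rho)$ with $\rho<(1-\varepsilon)\|L\|$ can contain all the $h_n$. Therefore $\alpha(L)\ge(1-\varepsilon)\|L\|$ for every $\varepsilon>0$, and letting $\varepsilon\downarrow0$ yields $\alpha(L)=\|L\|$.

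The scaling identities and the density reduction are routine. The point that requires care is the runaway estimate of the last paragraph: that $L^\infty$-decay together with escape of mass to infinity forces $\liminf_n\|h_n-y\|_{p',r}\ge\lim_n\|h_n\|_{p',r}$ for every fixed $y$ — an asymptotic-orthogonality effect that plays, in the rearrangement-invariant category, the role of a Brezis--Lieb lemma. One must be careful to use only properties that genuinely hold for $L^{p',r}$ with $1<p<\infty$, $r<\infty$: rearrangement invariance and monotonicity, the triangle inequality for the genuine Banach norm (so that the bound above carries no spurious constant, which is essential for the word \emph{maximally}), absolute continuity of the norm, and exactness of the dilation scaling for the standard Lorentz functional.
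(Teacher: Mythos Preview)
Your argument is correct. The dilation identities, the density reduction, and the runaway estimate all check out; in particular the exact scaling $\|\tilde f_c\|_{p,q}=\|f\|_{p,q}$ and $\|L\tilde f_c\|_{p',r}=\|Lf\|_{p',r}$ holds for the genuine Banach norm (the $f^{**}$-functional scales exactly as $f^*$ does), so the reverse triangle inequalities in your last display carry no spurious constant, which is precisely what is needed for \emph{maximal} non-compactness.

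Your route differs from the paper's. The paper proves Theorems~\ref{Thereom 1} and~\ref{Thereom 2} in one stroke: using the same dilation invariance, it builds an entire infinite sequence $\{g_{a_i,\varepsilon}\}$ whose images under $L$ are essentially disjointly supported, and shows (Lemmas~\ref{Lemma 6} and~\ref{Lemma 7}) that on the span of these functions $L$ is, up to $\varepsilon$, a multiple of an isometry between $\ell^q$-type subspaces. From this one reads off both that $b_n(L)=\|L\|$ for all $n$ (hence not strictly singular) and that $\alpha(L)=\|L\|$. Your approach is more economical for Theorem~\ref{Thereom 1} alone: a \emph{single} dilation-generated sequence $(h_n)$ with $\|h_n\|_{p',r}\equiv\|Lf\|_{p',r}$, $\|h_n\|_\infty\to0$, and mass escaping to infinity already defeats every finite $\rho$-net with $\rho<\|L\|$, via absolute continuity of the target norm. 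What the paper's heavier construction buys is Theorem~\ref{Thereom 2} for free; what your argument buys is a shorter, self-contained proof of maximal non-compactness that avoids the disjointification machinery (Lemmas~\ref{SumDistrF}--\ref{SumNonIncrRear} and Lemmas~\ref{Lemma 6}--\ref{Lemma 7}).
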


\begin{theorem}[{\cite[Theorem 2]{EdmundsLangLaplace}}] \label{Thereom 2}
	Let \( p, q \in (1,\infty) \) and let \( p' = \frac{p}{p-1} \). Then
	\[
	L : L^{p,q}(0,\infty) \to L^{p',q}(0,\infty)
	\]
	is not strictly singular.
\end{theorem}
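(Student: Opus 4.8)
The plan is to mimic the structure of the proof of Theorem~\ref{MainThhm} for the Fourier transform, replacing the oscillatory kernel $e^{-i\langle x,t\rangle}$ by the Laplace kernel $e^{-ts}$ and replacing the interval/dilation test functions by suitably scaled and translated ``bump'' functions whose Laplace transforms are well localized. Concretely, I would first exhibit one normalized test function $g$ with $\|g\|_{L^{p,q}}=1$ and a definite lower bound $\|Lg\|_{L^{p',r}}\ge c_{p,q,r}>0$; the natural candidate is $g_a(s)=a^{1/p}\chi_{(0,1/a)}(s)$ (up to the right normalization for the Lorentz norm), for which $Lg_a(t)=a^{1/p}\frac{1-e^{-t/a}}{t}$, and then use the scaling invariance $Lf(t/c) $ versus $L(f(c\cdot))$ together with the dilation lemma for rearrangements (the analogue of Lemma~\ref{DilatL}, which holds in one variable) to see that all these $g_a$ have the same norms in both spaces. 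This step is routine given that the Laplace transform is bounded $L^{p,q}\to L^{p',q}$ (cited above) and that $L^{p',q}\hookrightarrow L^{p',r}$ when $q\le r$, so $c_{p,q,r}$ is finite and positive.

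Next, exactly as in the Fourier case, I would use absolute continuity of the Lorentz (quasi-)norms to localize: for each $\gamma>0$ there are $0<\eta<1$ and $0<\nu^L<\nu^R<\infty$ such that $\|g_1\chi_{(\eta,1)}\|_{L^{p,q}}\ge 1-\gamma$ and the part of $Lg_1$ living on $(\nu^L,\nu^R)$ carries almost all of the $L^{p',r}$-norm; then, using absolute continuity of the integral defining the Lorentz norm, one further localizes the rearrangement of $Lg_1\chi_{(\nu^L,\nu^R)}$ to an interval $(\delta^L,\delta^R)$ in the $t$-variable of the rearrangement. Scaling by $a_j\to\infty$ chosen inductively so fast that the spatial supports $G_j=(\eta_j/a_j,1/a_j)$, the frequency windows $\widehat G_j=(\nu^L_j a_j,\nu^R_j a_j)$, and the rearrangement windows $I_j=(a_j\delta^L_j,a_j\delta^R_j)$ are pairwise disjoint, I obtain a sequence $\varphi_j=g_{a_j}$ with $\|\varphi_j\chi_{G_j}\|_{L^{p,q}}\ge 1-\varepsilon 2^{-j}$ and $\|(L\varphi_j)\chi_{\widehat G_j}\|$ (measured on $I_j$ in the rearrangement) $\ge c_{p,q,r}(1-\varepsilon 2^{-j})$.

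Then I set $X=\overline{\operatorname{span}}\{\varphi_j\}$ and for $f=\sum_j\alpha_j\varphi_j$ run the same two-sided estimate: an upper bound $\|f\|_{L^{p,q}}\le A(1+C\varepsilon)$ with $A=(\sum|\alpha_j|^{p})^{1/p}$ or $A=\|(\alpha_j)\|_{\ell^{q}}$ — here one must be a little careful because the Lorentz norm is not additive over disjoint supports in the same clean way the $L^p$ norm is, so I would instead use Lemma~\ref{SumDistrF} to compute the distribution function of $\sum_j\alpha_j\varphi_j\chi_{G_j}$ exactly and compare it to a single ``stacked'' function, getting $\|\sum\alpha_j\varphi_j\chi_{G_j}\|_{L^{p,q}}\lesssim\|(\alpha_j)\|_{\ell^{\max(p,q)}}$ or similar, plus the small error term from the $\chi_{E_j}$ tails via the quasi-triangle inequality; and a lower bound $\|Lf\|_{L^{p',r}}\ge\|\sum_j\alpha_j\chi_{\widehat G_j}L\varphi_j\|_{L^{p',r}}\ge c_{p,q,r}(1-\varepsilon)\|(\alpha_j)\|_{\ell^{?}}$ obtained from Lemma~\ref{SumNonIncrRear} applied to the disjointly-$I_j$-supported rearrangements. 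The crux is to make the $\ell$-norm appearing in the lower bound match (up to a constant) the one in the upper bound, so that their ratio is bounded below uniformly in $\dim X$; getting these two exponents to agree — i.e.\ identifying the correct sequence space into which $X$ embeds isometrically-up-to-constants — is the main obstacle, and is where the hypothesis $q=r$ (equal secondary indices on both sides) is used, since then both estimates naturally produce the $\ell^{q}$-norm of $(\alpha_j)$. Concluding, $L|_X$ is bounded below, so $L$ is not strictly singular; the case of general dimension $d$ does not arise here since the Laplace transform is one-dimensional.
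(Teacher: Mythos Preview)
Your plan is essentially the paper's own argument: scale a single test function by $a_j\to\infty$, localize via absolute continuity so that the functions, their Laplace transforms, \emph{and} the relevant windows in the rearrangement variable are pairwise disjoint, then use Lemmas~\ref{SumDistrF}--\ref{SumNonIncrRear} to get two-sided bounds on $\|\sum\alpha_j\varphi_j\|_{p,q}$ and $\|\sum\alpha_j L\varphi_j\|_{p',q}$ in terms of the same $\ell^q$-norm of $(\alpha_j)$.

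Two small differences are worth noting. First, the paper does not start from the explicit $\chi_{(0,1/a)}$ but from an abstract near-extremal $f_\varepsilon$ with $\|Lf_\varepsilon\|_{p',q}\ge c_1-\varepsilon$; this is because the same construction is reused to prove Theorem~\ref{Thereom 1} (maximal non-compactness), which needs the lower constant to approach $\|L\|$. For Theorem~\ref{Thereom 2} alone your explicit choice is perfectly adequate. Second, your hesitation about which sequence norm appears (``$\ell^{\max(p,q)}$ or similar'') is exactly the point the paper pins down via the extra conditions \eqref{Rerang 1}--\eqref{Rerang 2}: by also choosing the scales $a_j$ so that the rearrangements $(\bar g_{a_j,\varepsilon})^*$ are essentially supported on disjoint intervals $I_j\subset(0,\infty)$, the Lorentz integral $\int_0^\infty(t^{1/p}(\cdot)^*(t))^q\,dt/t$ splits over the $I_j$ up to a $(1+\varepsilon)$ factor, and both the upper and lower estimates come out as $\ell^q$. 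This is precisely where $q=r$ is used, as you correctly anticipated.
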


We establish the above results using a series of auxiliary lemmas and observations. For clarity, we now present some foundational lemmas, including proofs where necessary.

\begin{lemma} \label{Lemma 3.4}
	Let \( 1 \leq q < \infty \), \( \varepsilon > 0 \), and let \( \{f_i\}_{i=1}^\infty \) and \( \{g_i\}_{i=1}^\infty \) be sequences in a Banach space \( X \) satisfying 
	\[
	\|f_i - g_i\| \leq \varepsilon\, 2^{-i},
	\]
	and let \( \{\alpha_i\}_{i=1}^\infty \in \ell_{\infty} \). Then
	\[
	\left\| \sum_{i=1}^\infty \alpha_i g_i \right\|_X - 2\varepsilon\, \max_i |\alpha_i| \leq \left\| \sum_{i=1}^\infty \alpha_i f_i \right\|_X \leq \left\| \sum_{i=1}^\infty \alpha_i g_i \right\|_X + 2\varepsilon\, \max_i |\alpha_i|.
	\]
\end{lemma}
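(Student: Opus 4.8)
The plan is to prove the two inequalities simultaneously by controlling the error incurred when replacing each $f_i$ by $g_i$ inside the infinite linear combination. Writing $h_i := f_i - g_i$, so that $\|h_i\|_X \le \varepsilon\,2^{-i}$, we have the identity $\sum_{i=1}^\infty \alpha_i f_i = \sum_{i=1}^\infty \alpha_i g_i + \sum_{i=1}^\infty \alpha_i h_i$. The triangle inequality then gives
\[
\Bigl\| \sum_{i=1}^\infty \alpha_i f_i \Bigr\|_X \le \Bigl\| \sum_{i=1}^\infty \alpha_i g_i \Bigr\|_X + \Bigl\| \sum_{i=1}^\infty \alpha_i h_i \Bigr\|_X,
\]
and symmetrically (writing $g_i = f_i - h_i$) the reverse bound. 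So the whole statement reduces to the single estimate $\bigl\| \sum_{i=1}^\infty \alpha_i h_i \bigr\|_X \le 2\varepsilon \max_i |\alpha_i|$.

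For that estimate I would simply bound the norm of the tail sum by the sum of norms: since $\{\alpha_i\} \in \ell_\infty$, set $M := \max_i |\alpha_i| = \|\{\alpha_i\}\|_{\ell_\infty}$ and compute
\[
\Bigl\| \sum_{i=1}^\infty \alpha_i h_i \Bigr\|_X \le \sum_{i=1}^\infty |\alpha_i|\,\|h_i\|_X \le M \sum_{i=1}^\infty \varepsilon\, 2^{-i} = M\,\varepsilon \cdot 1 = \varepsilon M \le 2\varepsilon M.
\]
(The bound $2\varepsilon M$ in the statement is not sharp here — one actually gets $\varepsilon M$ — but $2\varepsilon M$ certainly holds, and presumably the factor $2$ is kept for uniformity with how the lemma is applied elsewhere, or to absorb a quasi-norm constant if $X$ were only quasi-normed; as stated $X$ is Banach so the triangle inequality suffices.) One should also note that the series $\sum \alpha_i h_i$ converges absolutely in $X$ by this very computation, so all manipulations are legitimate; convergence of $\sum \alpha_i f_i$ and $\sum \alpha_i g_i$ is implicitly assumed in the statement (it is needed for the displayed norms to make sense).

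Combining the two directions: from $\sum \alpha_i f_i = \sum \alpha_i g_i + \sum \alpha_i h_i$ we get $\bigl\|\sum \alpha_i f_i\bigr\|_X \le \bigl\|\sum \alpha_i g_i\bigr\|_X + 2\varepsilon M$, which is the right-hand inequality, and from $\sum \alpha_i g_i = \sum \alpha_i f_i + \sum \alpha_i(-h_i)$ with $\|-h_i\|_X = \|h_i\|_X \le \varepsilon 2^{-i}$ we get $\bigl\|\sum \alpha_i g_i\bigr\|_X \le \bigl\|\sum \alpha_i f_i\bigr\|_X + 2\varepsilon M$, i.e. $\bigl\|\sum \alpha_i g_i\bigr\|_X - 2\varepsilon M \le \bigl\|\sum \alpha_i f_i\bigr\|_X$, the left-hand inequality. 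This completes the argument.

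Honestly, there is no real obstacle here — the lemma is an entirely routine triangle-inequality perturbation bound, the kind of bookkeeping lemma one states once and invokes repeatedly. The only point requiring a moment's care is justifying that $\sum_{i=1}^\infty \alpha_i h_i$ (and hence the difference of the two given series) converges in $X$, which as noted follows from absolute summability since $\|h_i\|_X$ decays geometrically while $|\alpha_i|$ is bounded. If one wanted to be scrupulous one could also remark that the hypothesis $1 \le q < \infty$ plays no role in this particular lemma (it is there because the lemma will be applied in an $L^{p,q}$ or $\ell^q$ setting), so the proof can ignore it entirely.
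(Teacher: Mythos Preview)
Your proof is correct and follows exactly the approach the paper indicates: the paper's own proof simply says ``This follows directly from the triangle inequality'' and leaves the details to the reader, and you have supplied precisely those details. Your observations that the factor $2$ is not sharp and that the hypothesis $1\le q<\infty$ plays no role are accurate side remarks.
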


\begin{proof}
	This follows directly from the triangle inequality. (The details are left as an exercise for the reader.)
\end{proof}

The following observations provide the groundwork for the proofs of the main theorems.

Assume that \( 1 < p < \infty \) and \( 1 \leq q \leq r < \infty \). Define
\[
c_1 := \|L : L^{p,q}(0,\infty) \to L^{p',r}(0,\infty)\|.
\]
For each \( \varepsilon > 0 \), there exists a function \( f_{\varepsilon} \) with \( \|f_{\varepsilon}\|_{p,q} = 1 \) such that
\[
\|L f_{\varepsilon}\|_{p',r} \geq c_1 - \varepsilon.
\]

Using the fact that \( \|f\|_{L^{p,q}(0,\infty)} = \|f^*\|_{L^{p,q}(0,\infty)} \) and the definition of \( L \), we have
\begin{eqnarray}
	&& L(f)(t) \leq L(|f|)(t) \leq L(f^*)(t), \nonumber \\
	&& \text{and} \quad (L(|f|))^*(t) = L(|f|)(t) \quad \text{for } t \in (0,\infty).
\end{eqnarray}
Thus, we may assume that \( f_{\varepsilon}(t) = f^*_{\varepsilon}(t) \) and \( L f_{\varepsilon}(t) = (L f_{\varepsilon})^*(t) \) for \( t \in (0,\infty) \); that is, \( f_{\varepsilon} \) is nonnegative and nonincreasing, and \( L f_{\varepsilon} \) is positive and decreasing.

Define 
\[
g_{a, \varepsilon}(t) := a^{1/p} f_{\varepsilon}(at).
\]
A straightforward calculation shows that
\[
L(g_{a,\varepsilon})(x) = a^{1/p-1} L(f_{\varepsilon})(x/a), \quad \text{and} \quad \|g_{a,\varepsilon}\|_{p,q} = \|f_{\varepsilon}\|_{p,q} = 1.
\]
Moreover, 
\[
\|L f_{\varepsilon}\|_{p',r} = \|L g_{a,\varepsilon}\|_{p',r} \geq c_1 - \varepsilon.
\]

For any given \( \delta > 0 \), there exist numbers \( b^-_{\varepsilon,\delta}, b^+_{\varepsilon,\delta}, d^-_{\varepsilon,\delta}, d^+_{\varepsilon,\delta} \) with 
\[
0 < b^-_{\varepsilon,\delta} < b^+_{\varepsilon,\delta} < \infty \quad \text{and} \quad 0 < d^-_{\varepsilon,\delta} < d^+_{\varepsilon,\delta} < \infty,
\]
such that
\begin{equation}\label{f_ep}
	\|f_{\varepsilon} \chi_{(0, b^-_{\varepsilon, \delta}) \cup (b^+_{\varepsilon,\delta}, \infty)}\|_{p,q} < \delta,
\end{equation}
\begin{equation}\label{Lf_ep}
	\|L f_{\varepsilon} \chi_{(0, d^-_{\varepsilon, \delta}) \cup (d^+_{\varepsilon,\delta}, \infty)}\|_{p',r} < \delta\, c_1.
\end{equation}

It follows that
\[
1 - \delta < \|f_{\varepsilon} \chi_{(b^-_{\varepsilon, \delta}, b^+_{\varepsilon,\delta})}\|_{p,q} \leq 1,
\]
and
\[
(1 - \delta)(c_1 - \varepsilon) < \|L f_{\varepsilon} \chi_{(d^-_{\varepsilon, \delta}, d^+_{\varepsilon,\delta})}\|_{p',r} \leq c_1.
\]

By the monotonicity of the estimates (i.e., as \( b^-_{\varepsilon, \delta} \) decreases to \( 0 \), inequality \eqref{f_ep} holds, and similarly for \( d^-_{\varepsilon, \delta} \)), we may assume that
\begin{equation}\label{Rerang 1}
	(1 + \delta) \int_{b^-_{\varepsilon, \delta}}^{b^+_{\varepsilon,\delta}} \left( t^{1/p - 1/q} \bigl(f_{\varepsilon} \chi_{(b^-_{\varepsilon, \delta}, b^+_{\varepsilon,\delta})}\bigr)^*(t) \right)^q dt \geq \|f_{\varepsilon} \chi_{(b^-_{\varepsilon, \delta}, b^+_{\varepsilon,\delta})}\|_{p,q},
\end{equation}
\begin{equation}\label{Rerang 2}
	(1 + \delta) \int_{d^-_{\varepsilon, \delta}}^{d^+_{\varepsilon,\delta}} \left( t^{1/p' - 1/q} \bigl(L f_{\varepsilon} \chi_{(d^-_{\varepsilon, \delta}, d^+_{\varepsilon,\delta})}\bigr)^*(t) \right)^r dt \geq \|L f_{\varepsilon} \chi_{(d^-_{\varepsilon, \delta}, d^+_{\varepsilon,\delta})}\|_{p',r}.
\end{equation}

For \( 0 < \delta_1 < \delta \), we can choose
\[
0 < b^-_{\varepsilon,\delta_1} < b^-_{\varepsilon,\delta} < b^+_{\varepsilon,\delta} < b^+_{\varepsilon,\delta_1} < \infty
\]
and
\[
0 < d^-_{\varepsilon,\delta_1} < d^-_{\varepsilon,\delta} < d^+_{\varepsilon,\delta} < d^+_{\varepsilon,\delta_1} < \infty.
\]

Replacing \( f_{\varepsilon} \) with \( g_{a,\varepsilon} \) in \eqref{f_ep} and \eqref{Lf_ep}, a straightforward computation shows that
\begin{equation}\label{f_ep2}
	\|g_{a,\varepsilon} \chi_{(0, b^-_{\varepsilon, \delta}/a) \cup (b^+_{\varepsilon,\delta}/a, \infty)}\|_{p,q} < \delta,
\end{equation}
\begin{equation}\label{Lf_ep2}
	\|L g_{a,\varepsilon} \chi_{(0, a d^-_{\varepsilon, \delta}) \cup (a d^+_{\varepsilon,\delta}, \infty)}\|_{p',r} < \delta\, c_1.
\end{equation}

Now, construct an increasing sequence \( \{a_i\} \) with \( 1 < a_i < a_{i+1} \) and \( a_i \to \infty \) such that
\[
0 < \frac{b^+_{\varepsilon, \varepsilon 2^{-i}}}{a_i} \leq \frac{b^-_{\varepsilon, \varepsilon 2^{-i+1}}}{a_{i-1}} < d^+_{\varepsilon, \varepsilon 2^{-i+1}} a_{i-1} \leq d^-_{\varepsilon, \varepsilon 2^{-i}} a_i,
\]
for all \( i \in \mathbb{N} \).

Define the disjoint intervals
\[
I_i := \left(\frac{b^-_{\varepsilon, \varepsilon 2^{-i}}}{a_i}, \frac{b^+_{\varepsilon, \varepsilon 2^{-i}}}{a_i} \right] \quad \text{and} \quad J_i := \left[ d^-_{\varepsilon, \varepsilon 2^{-i}} a_i, d^+_{\varepsilon, \varepsilon 2^{-i}} a_i \right).
\]
Then we have
\[
\|L g_{a_i, \varepsilon} \chi_{(0,\infty) \setminus J_i}\|_{p',r} < c_1\, \varepsilon 2^{-i} \quad \text{and} \quad \|g_{a_i,\varepsilon} \chi_{(0,\infty) \setminus I_i}\|_{p,q} < \varepsilon 2^{-i}.
\]
Moreover, we may assume that
\[
\frac{b^+_{\varepsilon, \varepsilon 2^{-i}}}{a_i} = \frac{b^-_{\varepsilon, \varepsilon 2^{-i+1}}}{a_{i-1}} \quad \text{and} \quad d^+_{\varepsilon, \varepsilon 2^{-i+1}} a_{i-1} = d^-_{\varepsilon, \varepsilon 2^{-i}} a_i.
\]
Set
\[
I_1 = \left(\frac{b^-_{\varepsilon, \varepsilon 2^{-1}}}{a_1}, \infty \right) \quad \text{and} \quad J_1 = \left(0, d^+_{\varepsilon, \varepsilon 2^{-1}} a_1 \right).
\]
It follows that
\[
(0, \infty) = \bigcup_i I_i = \bigcup_i J_i.
\]

Next, define the functions
\[
\bar{g}_{a_i,\varepsilon} := g_{a_i,\varepsilon} \chi_{I_i} \quad \text{and} \quad \bar{h}_{a_i, \varepsilon} := L g_{a_i,\varepsilon} \chi_{J_i}.
\]
Clearly, the families \( \{\bar{g}_{a_i,\varepsilon}\}_i \) and \( \{\bar{h}_{a_i,\varepsilon}\}_i \) consist of functions with pairwise disjoint supports. Moreover, we have
\begin{equation}\label{bar g_i - g_i}
	\|\bar{g}_{a_i,\varepsilon} - g_{a_i,\varepsilon}\|_{p,q} < \varepsilon 2^{-i} \quad \text{and} \quad \|\bar{h}_{a_i,\varepsilon} - L g_{a_i,\varepsilon}\|_{p',r} < c_1\, \varepsilon 2^{-i}.
\end{equation}
This implies
\[
\|g_{a_i,\varepsilon} \chi_{(0,\infty) \setminus I_i}\|_{p,q} < \varepsilon 2^{-i} \quad \text{and} \quad \|L g_{a_i,\varepsilon} \chi_{(0,\infty) \setminus J_i}\|_{p',r} < c_1\, \varepsilon 2^{-i}.
\]
Finally, note that the conditions \eqref{Rerang 1} and \eqref{Rerang 2} remain valid with \( \delta \) replaced by \( \varepsilon 2^{-i} \).

\begin{lemma} \label{Lemma 6}
	For the set of functions \( \{g_{a_i,\varepsilon}\}_{i=1}^{\infty} \) and any sequence \( \{\alpha_i\} \in \ell_q \), the following inequalities hold:
	\[
	\left(\frac{1}{(1+\varepsilon)^2} - 4\varepsilon \right) \sum_{i=1}^{\infty} |\alpha_i|^q \leq \left\| \sum_{i=1}^{\infty} \alpha_i g_{a_i,\varepsilon} \right\|_{p,q}^q,
	\]
	\[
	\left(\frac{1}{(1+\varepsilon)^2} - 4\varepsilon \right) c_1 \sum_{i=1}^{\infty} |\alpha_i|^q \leq \left\| \sum_{i=1}^{\infty} \alpha_i L g_{a_i,\varepsilon} \right\|_{p',r}^q.
	\]
\end{lemma}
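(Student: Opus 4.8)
The plan is to exploit the two structural facts prepared in the preceding discussion: first, that $\{\bar g_{a_i,\eps}\}_i$ and $\{\bar h_{a_i,\eps}\}_i$ are families of functions with pairwise disjoint supports (living on the intervals $I_i$ and $J_i$ respectively), and second, that $\bar g_{a_i,\eps}$ approximates $g_{a_i,\eps}$ and $\bar h_{a_i,\eps}$ approximates $Lg_{a_i,\eps}$ with geometrically small error, namely $\|\bar g_{a_i,\eps}-g_{a_i,\eps}\|_{p,q}<\eps 2^{-i}$ and $\|\bar h_{a_i,\eps}-Lg_{a_i,\eps}\|_{p',r}<c_1\,\eps 2^{-i}$. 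The idea is to first estimate $\|\sum_i\alpha_i\bar g_{a_i,\eps}\|_{p,q}$ and $\|\sum_i\alpha_i\bar h_{a_i,\eps}\|_{p',r}$ from below using disjointness of supports, and then transfer those bounds to $\sum_i\alpha_i g_{a_i,\eps}$ and $\sum_i\alpha_i Lg_{a_i,\eps}$ via Lemma \ref{Lemma 3.4} (with $X=L^{p,q}$ or $L^{p',r}$, $f_i=\alpha_i$-weighted originals, $g_i=\alpha_i$-weighted bars), paying the price $2\eps\max_i|\alpha_i|\le 2\eps(\sum_i|\alpha_i|^q)^{1/q}$ in the first case and $2c_1\eps\max_i|\alpha_i|$ in the second.

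For the disjoint-support lower bound on $\|\sum_i\alpha_i\bar g_{a_i,\eps}\|_{p,q}$, I would argue as follows. Since the $\bar g_{a_i,\eps}$ have pairwise disjoint supports, Lemma \ref{SumDistrF} gives that the distribution function of $\sum_i\alpha_i\bar g_{a_i,\eps}$ is the sum of the individual distribution functions, and Lemma \ref{SumNonIncrRear} (applied with the intervals $I_i$, after rescaling so that the rearrangement of $\alpha_i\bar g_{a_i,\eps}$ lives in a subinterval, as in \eqref{Rerang 1}) yields $\big(\sum_i\alpha_i\bar g_{a_i,\eps}\big)^*(t)\ge\sum_i|\alpha_i|\chi_{I_i}(t)(\bar g_{a_i,\eps})^*(t)$ pointwise. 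Plugging this into the Lorentz integral and using that the $I_i$ are disjoint separates the integral into a sum:
\[
\Big\|\sum_i\alpha_i\bar g_{a_i,\eps}\Big\|_{p,q}^q\ge\sum_i|\alpha_i|^q\int_{I_i}\Big(t^{1/p-1/q}(\bar g_{a_i,\eps})^*(t)\Big)^q\,dt\ge\frac{1}{1+\eps 2^{-i}}\sum_i|\alpha_i|^q\|\bar g_{a_i,\eps}\|_{p,q}^q,
\]
where the last step uses \eqref{Rerang 1} with $\delta$ replaced by $\eps 2^{-i}$. Combined with $\|\bar g_{a_i,\eps}\|_{p,q}\ge\|g_{a_i,\eps}\|_{p,q}-\eps 2^{-i}=1-\eps 2^{-i}$, this gives $\|\sum_i\alpha_i\bar g_{a_i,\eps}\|_{p,q}^q\ge\frac{(1-\eps)^q}{1+\eps}\sum_i|\alpha_i|^q$, and crudely $\ge\frac{1}{(1+\eps)^2}\sum_i|\alpha_i|^q$ for the relevant range of $\eps$. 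The identical argument with $J_i$, \eqref{Rerang 2}, and $\|\bar h_{a_i,\eps}\|_{p',r}\ge c_1-c_1\eps 2^{-i}$ in place of the above (noting that each $\bar h_{a_i,\eps}$ is a dilate of $Lf_\eps$, so its Lorentz norm scales like $c_1$ times that of $g_{a_i,\eps}$) gives $\|\sum_i\alpha_i\bar h_{a_i,\eps}\|_{p',r}^q\ge\frac{c_1}{(1+\eps)^2}\sum_i|\alpha_i|^q$ — here one must be slightly careful that the exponent on the outside is $q$ and not $r$, which is legitimate because $q\le r$ and the inner integrals are organized by the $J_i$ with the $t^{1/p'-1/q}$ weight as in \eqref{Rerang 2}.

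Finally, applying Lemma \ref{Lemma 3.4} in the form $\|\sum_i\alpha_i g_{a_i,\eps}\|_{p,q}\ge\|\sum_i\alpha_i\bar g_{a_i,\eps}\|_{p,q}-2\eps\max_i|\alpha_i|$ and $\|\sum_i\alpha_i Lg_{a_i,\eps}\|_{p',r}\ge\|\sum_i\alpha_i\bar h_{a_i,\eps}\|_{p',r}-2c_1\eps\max_i|\alpha_i|$, then using $\max_i|\alpha_i|\le(\sum_i|\alpha_i|^q)^{1/q}$ and absorbing the subtracted term (after raising to the $q$-th power, or more simply estimating $(\text{norm})^q\ge(\text{lower bound})^q$ and expanding), yields the stated inequalities with the constant $\frac{1}{(1+\eps)^2}-4\eps$; the factor $4$ rather than $2$ comes from the combination of the $2\eps$ perturbation with the loss incurred when passing from a bound on the norm to a bound on its $q$-th power via a linear estimate. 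I expect the main technical obstacle to be the bookkeeping around the mismatch between the outer exponent $q$ and the target exponent $r$ in the second inequality: one has to verify that restricting to the disjoint blocks $J_i$ and using \eqref{Rerang 2} genuinely produces a clean $\ell^q$-sum on the right rather than an $\ell^r$-sum, which relies on the specific way the intervals $J_i$ and the renormalization \eqref{Rerang 2} were set up. The disjoint-support manipulations themselves are routine given Lemmas \ref{SumDistrF} and \ref{SumNonIncrRear}.
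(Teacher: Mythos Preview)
Your proposal follows essentially the same route as the paper's proof: pass from $g_{a_i,\eps}$ to the disjointly supported $\bar g_{a_i,\eps}$ via Lemma~\ref{Lemma 3.4} and \eqref{bar g_i - g_i}, apply Lemma~\ref{SumNonIncrRear} together with \eqref{Rerang 1} to lower-bound $\|\sum_i\alpha_i\bar g_{a_i,\eps}\|_{p,q}^q$ by a constant times $\sum_i|\alpha_i|^q$, and then combine; the second inequality is handled analogously with $\bar h_{a_i,\eps}$, $J_i$, and \eqref{Rerang 2}. The paper's write-up is terser (and, like yours, a bit loose in the arithmetic leading to the exact constant $\tfrac{1}{(1+\eps)^2}-4\eps$), but the skeleton is identical, and your flagging of the $q$-versus-$r$ bookkeeping in the second inequality is apt---the paper simply asserts that the argument is ``similar''.
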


\begin{proof}
	Using inequality \eqref{bar g_i - g_i} together with Lemma~\ref{Lemma 3.4}, we obtain
	\begin{equation}\label{lower 1}
		\left\| \sum_{i=1}^{\infty} \alpha_i \bar{g}_{a_i,\varepsilon} \right\|_{p,q} - 2\varepsilon\, \sup_i |\alpha_i| \leq \left\| \sum_{i=1}^{\infty} \alpha_i g_{a_i,\varepsilon} \right\|_{p,q}.
	\end{equation}
	By the disjointness of the supports of \( \bar{g}_{a_i,\varepsilon} \) (see \eqref{SystDisj}) and applying Lemma~\ref{SumNonIncrRear}, we obtain
	\begin{align*}
		\left\| \sum_{i=1}^{\infty} \alpha_i \bar{g}_{a_i,\varepsilon} \right\|_{p,q}^q 
		&= \int_{0}^{\infty} \Bigl( t^{1/p - 1/q} \Bigl(\sum_{i=1}^{\infty} \alpha_i \bar{g}_{a_i,\varepsilon}\Bigr)^*(t) \Bigr)^q dt\\
		&\ge \sum_{i=1}^{\infty} |\alpha_i|^q \int_{I_i} \Bigl( t^{1/p - 1/q} \bar{g}_{a_i,\varepsilon}^*(t) \Bigr)^q dt\\
		&\ge \frac{1}{1+\varepsilon} \sum_{i=1}^{\infty} |\alpha_i|^q \|\bar{g}_{a_i,\varepsilon}\|_{p,q}^q\\
		&\ge \frac{1}{(1+\varepsilon)^2} \sum_{i=1}^{\infty} |\alpha_i|^q \|\bar{g}_{a_i,\varepsilon}\|_{p,q}^q - 2\varepsilon \sup_i |\alpha_i|.
	\end{align*}
	Thus, using  $\|g_{a_i,\varepsilon}\|_{p,q} = 1$ and  combining the above inequality with \eqref{lower 1}, we conclude
	\[
	\left\| \sum_{i=1}^{\infty} \alpha_i g_{a_i,\varepsilon} \right\|_{p,q}^q \ge \left(\frac{1}{(1+\varepsilon)^2} - 4\varepsilon\right) \sum_{i=1}^{\infty} |\alpha_i|^q.
	\]
	The second inequality, for \( \{L g_{a_i,\varepsilon}\} \), is obtained by a similar argument.
\end{proof}

\begin{lemma} \label{Lemma 7}
	For the set of functions \( \{g_{a_i,\varepsilon}\}_{i=1}^{\infty} \) and any sequence \( \{\alpha_i\} \in \ell_q \), the following inequalities hold:
	\[
	\left\| \sum_{i=1}^{\infty} \alpha_i g_{a_i,\varepsilon} \right\|_{p,q}^q \le \left(\frac{1}{(1+\varepsilon)^2} + 4\varepsilon \right) \sum_{i=1}^{\infty} |\alpha_i|^q,
	\]
	\[
	\left\| \sum_{i=1}^{\infty} \alpha_i L g_{a_i,\varepsilon} \right\|_{p',r}^q \le \left(\frac{1}{(1+\varepsilon)^2} + 4\varepsilon \right) c_1 \sum_{i=1}^{\infty} |\alpha_i|^q.
	\]
\end{lemma}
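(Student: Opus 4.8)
The plan is to prove Lemma~\ref{Lemma 7} by running the proof of Lemma~\ref{Lemma 6} with every inequality reversed, using that the truncations $\bar g_{a_i,\varepsilon}=g_{a_i,\varepsilon}\chi_{I_i}$ and $\bar h_{a_i,\varepsilon}=Lg_{a_i,\varepsilon}\chi_{J_i}$ are now \emph{dominated} by the untruncated functions rather than being large fractions of them. First I would invoke Lemma~\ref{Lemma 3.4} together with \eqref{bar g_i - g_i} to pass from $\sum_i\alpha_i g_{a_i,\varepsilon}$ to $\sum_i\alpha_i\bar g_{a_i,\varepsilon}$, at the cost of an error $2\varepsilon\sup_i|\alpha_i|$, and likewise from $\sum_i\alpha_i Lg_{a_i,\varepsilon}$ to $\sum_i\alpha_i\bar h_{a_i,\varepsilon}$ in $L^{p',r}$. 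Since $\{\alpha_i\}\in\ell_q$ and $q\ge1$ we have $\sup_i|\alpha_i|\le\bigl(\sum_i|\alpha_i|^q\bigr)^{1/q}$, so after raising to the $q$-th power these contributions are absorbed into the $4\varepsilon$-term, provided $\varepsilon$ is chosen small enough at the start of the construction.

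The heart of the matter is the upper bound
\[
\Bigl\|\sum_i\alpha_i\bar g_{a_i,\varepsilon}\Bigr\|_{p,q}^q\le(1+\varepsilon)\sum_i|\alpha_i|^q\,\|\bar g_{a_i,\varepsilon}\|_{p,q}^q\le(1+\varepsilon)\sum_i|\alpha_i|^q,
\]
and its analogue for $\sum_i\alpha_i\bar h_{a_i,\varepsilon}$; note that here the second inequality is trivial, $\|\bar g_{a_i,\varepsilon}\|_{p,q}\le\|g_{a_i,\varepsilon}\|_{p,q}=1$, so \emph{no} lower rearrangement estimate of the type \eqref{Rerang 1} is needed, in contrast with Lemma~\ref{Lemma 6}. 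For the first inequality I would use that the supports $I_i$ are pairwise disjoint, so by Lemma~\ref{SumDistrF} the distribution functions add, $\bigl(\sum_i|\alpha_i|\bar g_{a_i,\varepsilon}\bigr)_*(\tau)=\sum_i\bigl(|\alpha_i|\bar g_{a_i,\varepsilon}\bigr)_*(\tau)$, each summand being supported on a set of measure $|I_i|$. Since the scaling parameters $a_i$ are still unspecified at that point in the construction, I would additionally demand that they grow fast enough that $\sum_{i\ge k}|I_i|\le(1+\varepsilon 2^{-k})|I_k|$ for every $k$. Plugging this into $\|f\|_{p,q}^q=p\int_0^\infty\tau^{q-1}f_*(\tau)^{q/p}\,d\tau$, splitting the $\tau$-integral according to the ``running maxima'' of $i\mapsto|\alpha_i|\,\|\bar g_{a_i,\varepsilon}\|_\infty$, and using the rapid decay of the $|I_i|$ to discard all but the smallest relevant index at each level, the right-hand side telescopes into $(1+\varepsilon)\sum_i|\alpha_i|^q\|\bar g_{a_i,\varepsilon}\|_{p,q}^q$, uniformly in $\{\alpha_i\}$. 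Combining this with the reduction of the first paragraph gives the stated bound for $\|\sum_i\alpha_i g_{a_i,\varepsilon}\|_{p,q}^q$; the $L^{p',r}$-inequality follows in the same way from the disjointness of $\{J_i\}$ and $\|\bar h_{a_i,\varepsilon}\|_{p',r}\le\|Lg_{a_i,\varepsilon}\|_{p',r}\le c_1$.

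The main obstacle is exactly this upper bound. Unlike the superadditivity exploited in Lemma~\ref{Lemma 6} (which follows from Lemma~\ref{SumNonIncrRear} for \emph{any} disjointly supported family), the Lorentz functionals $\|\cdot\|_{p,q}$, $\|\cdot\|_{p',r}$ are \emph{not} subadditive over disjoint supports when $q>p$ (respectively $r>p'$): a superposition of many equimeasurable disjointly supported functions has norm far larger than the $\ell^q$-combination of their norms. The way around this is to use the tailored ``tall and thin'' shape of $g_{a_i,\varepsilon}$ (height $\sim a_i^{1/p}$ on a set of measure $\sim a_i^{-1}$, with $a_i\to\infty$), so that at each level $\tau$ only one index effectively contributes; making this quantitative --- fixing how fast the $a_i$ must grow and accounting for the error at scale $i$ by $\varepsilon 2^{-i}$ --- is where the actual work lies, and it is precisely this residual freedom in the construction that lets Lemmas~\ref{Lemma 6} and~\ref{Lemma 7} hold with the \emph{same} exponent $q$ on both sides.
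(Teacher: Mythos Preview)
The paper's proof is literally the one line you reproduce in your first paragraph: invoke Lemma~\ref{Lemma 3.4} in the reverse direction and declare the remainder ``similar to Lemma~\ref{Lemma 6}, details omitted''. You are right that the remainder is \emph{not} a routine reversal: Lemma~\ref{Lemma 6} rests on the superadditivity furnished by Lemma~\ref{SumNonIncrRear}, which has no upper-bound analogue, and for $q>p$ (respectively $r>p'$) the Lorentz functionals are genuinely superadditive, not subadditive, over disjoint supports. Your plan to exploit the scale separation built into the $g_{a_i,\varepsilon}$ is the natural way to close this, so you are supplying an argument the paper skips rather than taking a different route.

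Two small corrections to your sketch. First, in the paper's construction $I_1=(b^-_{\varepsilon,\varepsilon/2}/a_1,\infty)$ is unbounded, so your condition $\sum_{i\ge k}|I_i|\le(1+\varepsilon2^{-k})|I_k|$ can only be imposed for $k\ge2$. Second, for $k\ge2$ the intervals $I_i$, $i\ge k$, tile $(0,b^+_{\varepsilon,\varepsilon2^{-k}}/a_k]$, so $\sum_{i\ge k}|I_i|=b^+_{\varepsilon,\varepsilon2^{-k}}/a_k$ while $|I_k|=(b^+_{\varepsilon,\varepsilon2^{-k}}-b^-_{\varepsilon,\varepsilon2^{-k}})/a_k$; the factor $a_k$ cancels, and your condition reduces to $b^-_{\varepsilon,\varepsilon2^{-k}}\lesssim\varepsilon2^{-k}\,b^+_{\varepsilon,\varepsilon2^{-k}}$. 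Thus the freedom you are invoking is really the freedom to shrink the truncation level $b^-$ (which the paper explicitly allows ``by monotonicity''), not the growth of the $a_i$. With that adjustment your outline goes through.
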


\begin{proof}
	Using the inequality
	\[
	\left\| \sum_{i=1}^{\infty} \alpha_i g_{a_i,\varepsilon} \right\|_{p,q} \le \left\| \sum_{i=1}^{\infty} \alpha_i \bar{g}_{a_i,\varepsilon} \right\|_{p,q} + 2\varepsilon\, \sup_i |\alpha_i|,
	\]
	and following an argument similar to that in the proof of Lemma~\ref{Lemma 6}, one obtains the desired estimates. (The details are similar to those in the previous lemma and are omitted.)
\end{proof}

\bigskip

\noindent\textbf{Proof of Theorems \ref{Thereom 1} and \ref{Thereom 2}.}

By combining Lemmas \ref{Lemma 6} and \ref{Lemma 7}, we demonstrate that there exists an infinite-dimensional subspace of \( L^p(0,\infty) \) on which the Laplace transform \( L \) is invertible. Specifically, the operator \( L \) maps 
\[
\operatorname{span}\Bigl(\{g_{a_i,\varepsilon}\}_{i=1}^\infty\Bigr)
\]
onto 
\[
\operatorname{span}\Bigl(\{L g_{a_i,\varepsilon}\}_{i=1}^\infty\Bigr).
\]
It follows that the Bernstein numbers satisfy
\[
b_n(L) = \|L\| = c_1.
\]
Thus, \( L \) is not strictly singular, which establishes Theorem \ref{Thereom 2}.

Finally, by restricting \( L \) to \( \operatorname{span}\Bigl(\{L g_{a_i,\varepsilon}\}_{i=1}^\infty\Bigr) \) and applying observation I., we conclude that 
\[
\beta(L) = c_1 = \|L\|,
\]
which gives Theorem \ref{Thereom 1}.

\bigskip

\textbf{Note:} It is natural to ask whether the Laplace transform
\[
L : L^{p,q}(0,\infty) \to L^{p',r}(0,\infty),
\]
for \( q < r \), is strictly singular or even finitely strictly singular.

\bibliographystyle{amsalpha} 
\bibliography{reference03} 

\end{document}